\documentclass[12pt,a4paper,reqno]{amsart}
\usepackage[margin=2.5cm]{geometry}
\usepackage[T1]{fontenc}
\usepackage{amssymb,mathrsfs,bm,dsfont,stmaryrd}
\usepackage{graphicx}
\usepackage{enumitem}
\setlist[enumerate,1]{label=(\roman*)}
\usepackage{cite}
\usepackage[hidelinks]{hyperref}

\numberwithin{equation}{section}
\allowdisplaybreaks[1]

\usepackage{cleveref}
  \crefname{theorem}{Theorem}{Theorems}
  \crefname{theorem}{Theorem}{Theorems}
  \crefname{theorem*}{Theorem*}{Theorems}
  \crefname{lemma}{Lemma}{Lemmas}
  \crefname{lemma}{Lemma}{Lemmas}
  \crefname{remark}{Remark}{Remarks}
  \crefname{proposition}{Proposition}{Propositions}
\crefname{notation}{Notation}{Notations}
\crefname{claim}{Claim}{Claims}
  \crefname{definition}{Definition}{Definitions}
  \crefname{corollary}{Corollary}{Corollaries}
  \crefname{section}{Section}{Sections}
  \crefname{figure}{Figure}{Figures}
    \crefname{assumption}{Assumption}{Assumptions}

\newtheorem{theorem}{Theorem}[section]
\newtheorem{theorem*}{Theorem*}[section]

\newtheorem{claim}[theorem]{Claim}
\newtheorem{lemma}[theorem]{Lemma}
\newtheorem{corollary}[theorem]{Corollary}
\newtheorem{proposition}[theorem]{Proposition}
\newtheorem{definition}[theorem]{Definition}

\numberwithin{equation}{section}

\theoremstyle{definition}
\newtheorem*{remark}{Remark}

\newcommand{\bbR}{\mathbb R}
\newcommand{\bbZ}{\mathbb Z}
\newcommand{\eps}{\varepsilon}
\newcommand{\LCD}{\operatorname{LCD}}
\newcommand{\QLCD}{\operatorname{QLCD}}
\newcommand{\dist}{\operatorname{dist}}
\newcommand{\supp}{\operatorname{supp}}
\newcommand{\Comp}{\operatorname{Comp}}
\newcommand{\Incomp}{\operatorname{Incomp}}

\newcommand{\HS}{\mathrm{HS}}
\newcommand{\rank}{\operatorname{rank}}

\begin{document}

\title{Rank and Invertibility of Dense Signed Random Regular Matrices}

\vspace{1cm}

\author{\textsc{Huaijin Liang, Yuxiao Shi, Kexin Yu, and Tingzhou Yu}}

\vspace{0.3cm}

\begin{abstract}
Let $A$ be the (non-symmetric) adjacency matrix of a uniformly random $d$-regular directed graph on $n$ vertices, and let $\Xi$ be independent of $A$ with i.i.d.\ Rademacher entries. Suppose that $\min(d,n-d)\ge\lambda n$ for some fixed $\lambda\in(0,1/2]$.
We show that there exists $c > 0$, depending only on $\lambda$, such that
\[
\mathbf P_{A,\Xi}\{\rank(A\circ\Xi)\le n-k\}\le e^{-c nk},\qquad1\le k\le n.
\]
As an ingredient in the proof of the rank bound, we use the
case $\kappa=0$ of the following quantitative smallest singular value
estimate:
\[
 \mathbf P_{A,\Xi}\{s_n(A\circ\Xi)\le\kappa\}
 \le C\kappa\sqrt n+e^{-c’ n},
 \qquad \kappa\ge0,
\]
where $C,c’>0$ depend only on $\lambda$.
\end{abstract}

\subjclass[2020]{Primary 60B20; Secondary 15A03, 15A18, 05C80}
\keywords{rank deficiency, corank, smallest singular value,
invertibility of random matrices, signed random regular digraphs}

\maketitle

\section{Introduction}\label{sec1}

For positive integers $d\le n$, let $\mathcal{M}_{n,d}$ be the set of all $n\times n$ $\{0,1\}$-matrices whose row and column sums are equal to $d$. Let $A$ be uniformly distributed on $\mathcal{M}_{n,d}$, and let $\Xi=(\xi_{ij})_{1\le i,j\le n}$ have i.i.d. Rademacher entries and be independent of $A$. We consider the matrix $B:=A\circ\Xi$, where $\circ$ denotes the Hadamard (Schur) product. The matrix $B$ is the signed adjacency matrix of a random $d$-regular directed graph (digraph) on $n$ labeled vertices allowing self-loops, where each vertex has in-degree and out-degree $d$.

In 2017, Cook \cite[Theorem 1.3]{MR3602844} studied the singularity probability of this model and proved that when $C\log^2 n\le d \le n$, \[ \mathbf P(\rank (B)<n)=O(d^{-1/4}).\]  
For $d=\lceil r n\rceil$ with fixed $r\in (0,1]$, Ferber, Jain, Luh, and Samotij \cite{ferber2021counting} obtained the exponential type probability bound, \[\mathbf P(\rank (B)<n)\le \exp(-n^{c})\] for some constant $c=c(r)>0$. However, both estimates are still far from the expected  exponentially small bound on the singularity probability.
We work in the dense regime $\min(d,n-d)\ge\lambda n$, where $\lambda \in (0, 1/2]$ is fixed. In this regime, we obtain an exponentially small singularity probability and a large deviation bound for the rank. More precisely, we ask how $\mathbf P(\rank (B)\le n-k)$ depends on $k$, allowing $k$ to grow throughout $1\le k \le n$. 

The corresponding questions for matrices with independent entries have a long history.
A central problem in combinatorial random matrix theory is to determine the singularity probability of an i.i.d. Rademacher matrix $\Xi$. It was conjectured that \[\mathbf P(\rank(\Xi)<n)=(2+o(1))n^22^{-n}.\] 
In a breakthrough work, Tikhomirov \cite{tikhomirov2020singularity} determined the sharp exponential rate \[\mathbf P(\rank(\Xi)<n)=(1/2+o(1))^n.\] Recently, Hu, Song, and Wu \cite{HuSongWu2026} resolved this conjecture.

Estimating the probability of larger corank is a very natural extension of the singularity problem. In 1995,  Kahn, Koml\'{o}s, and Szemer\'{e}di \cite{kahn1995probability} proved that for every $\varepsilon>0$, there exists a constant $C=C(\varepsilon)>0$ such that \[\mathbf P(\rank (\Xi)\le n-C)\le \varepsilon^n.\] Rudelson \cite{rudelson2024large} proved the bound of the form $\exp(-c_1kn)$  for matrices with i.i.d.  nonconstant subgaussian entries in the range $1\le k \le c_2\sqrt{n}$, where $c_1, c_2$ are positive constants. Hunter, Kwan,
Sauermann, and Sawhney established this bound for i.i.d. Rademacher entries over the full range $1\le k\le n$ in \cite{MR5088702}. Polavarapu extended this full range large corank estimate to independent but not necessarily  identically distributed real entries whose atom probabilities are uniformly bounded away from one \cite{polavarapu2026exponentialrankboundsrandom}.
The sharper exponential rates are known over smaller ranges of $k$. For an $n\times n$ matrix with i.i.d. $\{0,1\}$-valued Bernoulli entries of fixed mean $p\in (0,1/2]$, Jain, Sah, and Sawhney proved that the probability of corank at least $k$ is $(1-p+o(1))^{kn}$ for every fixed $k$ in \cite{jain2022rank}. Song and Wang recently extended this asymptotic rate to $k=o(\sqrt{\log n})$ in \cite{song2026rank}.


A related result holds for random symmetric matrices. For a symmetric matrix whose entries on and above the diagonal are i.i.d. copies of a mean zero, variance one subgaussian random variable, Han \cite{han2026rank} proved a bound of the form $\exp(-c_1kn)$ in the range $1\le k\le c_2\sqrt n$, where the positive constants $c_1, c_2$ depend only on the entry distribution.

Our first main result establishes a full range large deviation inequality for the signed adjacency matrix of a random $d$-regular directed graph in the dense regime, which matches the $kn$ exponent scale of the independent entry results in \cite{MR5088702, polavarapu2026exponentialrankboundsrandom}.






\begin{theorem}\label{thm:rank}
Fix $\lambda\in(0,1/2]$. There exists a constant $c_{\ref{thm:rank}}>0$, depending only on $\lambda$, such that the following holds. If $\min(d,n-d)\ge\lambda n$, 
then, for every $1\le k\le n$,
\[
        \mathbf P_{A,\Xi}\{\rank(A\circ\Xi)\le n-k\}
        \le e^{-c_{\ref{thm:rank}}kn}.
\]
\end{theorem}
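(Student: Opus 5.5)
We plan to follow the strategy of Hunter--Kwan--Sauermann--Sawhney and Polavarapu: condition on the support pattern $A$, and exploit the independent Rademacher signs $\Xi$. The abstract tells us we may use the $\kappa=0$ case of the singular value estimate, namely $\mathbf P\{s_n(A\circ\Xi)=0\}\le e^{-c'n}$. This immediately handles $k\le k_0$ for any fixed constant $k_0$, at the cost of shrinking $c$. It remains to treat $k$ larger than a large constant. We will, however, aim for a uniform argument.

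\textbf{Structural properties of $A$.} First we fix a "good" event $\mathcal G$ for $A$ with $\mathbf P(\mathcal G^c)\le e^{-c n^2}$. On $\mathcal G$, every $m\times m'$ submatrix with $m,m'\ge \eta n$ contains at least $c\lambda mm'$ ones. In addition, for every set $J$ of columns with $|J|\ge\eta n$, a positive fraction of the rows have at least $c\lambda|J|$ ones in $J$. Such bounds follow from standard discrepancy estimates for uniform regular digraphs, for example via switchings or via comparison with the configuration/Bernoulli model in the dense regime. Their failure probabilities are $e^{-cn^2}$, which is below $e^{-ckn}$ for all $k\le n$.

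\textbf{Rank reduction via row exposure.} Conditionally on $A\in\mathcal G$, the rows of $B=A\circ\Xi$ are independent. Row $i$ is a vector whose support $S_i$ is fixed (of size $d$) and whose signs are i.i.d. If $\rank B\le n-k$, then revealing the rows in order, at least $k$ of them lie in the span of the preceding ones. The subspace spanned is random, so we follow HKSS and use a smoothed Littlewood--Offord/inverse argument. For a subspace $V$ of codimension $\ge k$, the probability that a signed row with support $S_i$ lies in $V$ is bounded by $\sup_V\mathbf P(\xi_{S_i}\in \pi_{S_i}V)$. By Halász-type bounds, this is at most $e^{-ck}$ unless $V^\perp$ restricted to $S_i$ contains structured vectors. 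Those are vectors that are compressible, or that have small LCD/QLCD relative to the scale $k$.

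\textbf{Main obstacle: excluding structured normals.} The core step is to show that with probability $1-e^{-ckn}$, the kernel of $B^T$ (or of the relevant $(n-k)$-row submatrices) contains no $k$-dimensional subspace that is "structured" on many supports $S_i$. We would handle compressible vectors by a net argument combined with the good event $\mathcal G$. Each fixed sparse or close-to-sparse vector $x$ has $\mathbf P(Bx\approx0)\le e^{-cn}$, because row $i$ gives $\sum_{j\in S_i}\xi_{ij}x_j$ and many rows see a spread part of $x$. For a $k$-dimensional subspace we would tensorize: choose an almost-orthogonal basis and use a net on the Grassmannian of size $(C/\eps)^{kn'}$ for the relevant dimension, against a small-ball probability of $e^{-ckn}$. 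For incompressible, arithmetically structured subspaces, we would adapt the subspace-LCD of Rudelson--Vershynin and the HKSS/Polavarapu counting of "unstructured" vectors over $\mathbb F_p$ (or at scale $\sqrt k$). Here we replace independence of entries by independence of signs given the fixed supports, using $\mathcal G$ to guarantee that each row's support meets the spread coordinates in $\gtrsim\lambda n$ places.

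We expect this structured-subspace exclusion to be the hardest part, particularly for $k$ close to $n$, where the nets must have size $e^{O(kn)}$ with matching constants. We would try first to reduce the case $k\ge \delta n$ to a simple counting bound. For $k\ge\delta n$, the event requires some $(n-k)\times n$ row-submatrix together with $k$ further rows lying in its span. A union bound over $\binom nk$ choices, combined with a per-row bound of $2^{-c k}$ (from the Odlyzko-type fact that a $\pm1$ vector on a support of size $d$ lies in a fixed subspace of codimension $k$ with probability at most $2^{-ck}$, using $\mathcal G$ so that the subspace's restriction to $S_i$ still has codimension $\gtrsim k$), would give $\binom nk 2^{-ck^2}\le e^{-c'kn}$ when $k\ge\delta n$. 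The structured analysis would then only be needed for the intermediate range $k_0\le k\le\delta n$.
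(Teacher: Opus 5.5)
There is a genuine gap. Using the $\kappa=0$ singularity bound for bounded $k$ is fine, and the paper does the same. But the whole difficulty lies in the range you leave as a program (``adapt subspace-LCD \dots\ counting over $\mathbb F_p$''), and the one concrete mechanism you propose does not work. A support event $\mathcal G$ with $\mathbf P(\mathcal G^c)\le e^{-cn^2}$ cannot guarantee that the relevant subspace keeps codimension $\gtrsim k$ after restriction to each support $S_i$. The Odlyzko gain for a signed row is $2^{-\dim P_{S_i}V^\perp}$, and $V^\perp$ is determined by the signs of the other rows, not by $A$. This dimension drops whenever $V^\perp$ contains vectors supported in $S_i^c$, for instance sparse kernel vectors. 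These already occur with probability $e^{-\Theta(n)}$: two columns with equal support and proportional signs suffice. So your $k\ge\delta n$ counting step, with its per-row bound $2^{-ck}$, is unjustified as stated; sparse kernel vectors must be handled jointly with the corank event. The paper avoids large $k$ altogether. It observes that $A\circ\Xi$ is the independent matrix with entries $\pm1$ w.p.\ $d/(2n)$ each, conditioned on an event of probability $e^{-O(n\log n)}$. Polavarapu's independent-entry bound therefore transfers for $k\ge K\log n$ (Proposition~\ref{lem:large-corank}).

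For $k\lesssim\log n$, two ideas are missing. First, sparse left-kernel vectors are not excluded but \emph{removed}. Such a vector exhibits a row of $B_I$ in the span of at most $\delta n$ others. Deleting that row preserves the rank and lowers the corank by one, and Odlyzko plus a union bound over supports yields a factor $e^{-cn}$. One then inducts on (deleted rows) $+$ (target corank) (Lemma~\ref{lem:sparse-kernel-reduction}). Second, write $V$ for the column span of $B_I$, so that $V^\perp=\ker(B_I^T)$. When $V^\perp$ has no sparse vectors, one needs $r_j(V)=\dim P_{C_j}V^\perp\ge h/Q$ for all but $O(h)$ columns. This rests on the covering property that any $Q$ column supports cover all but $\eta n$ coordinates. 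That property fails with probability of order $e^{-cn}$, far larger than $e^{-ckn}$, so it cannot be placed in a good event. Your $e^{-cn^2}$ discrepancy event, via a greedy covering argument, only caps the number of bad columns at about $\eta n$. That is useless when $h\ll\eta n$, because the basis-counting (thin/thick) argument needs $\gtrsim h$ good columns among the $r_0+h$ columns outside a basis. The paper's fix is the covering defect: fewer than $Q(\operatorname{D}(A)+1)$ columns are bad, and $\mathbf P\{\operatorname{D}(A)\ge r\}\le e^{-cnr}$ for $r\le K\log n$. For fixed $A$ with $\operatorname{D}(A)\lesssim h$, HKSS relative anticoncentration on the good columns plus a thin/thick decomposition then gives $e^{-cnh}$ (Proposition~\ref{prop:fixed-defect}, Lemma~\ref{lem:unstructured-corank}). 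Without this trade-off between support defect and corank, or a substitute for it, your outline does not reach the $e^{-ckn}$ exponent.
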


\begin{remark}
Using the Bernoulli decomposition and relative anticoncentration
results of Polavarapu~\cite{polavarapu2026exponentialrankboundsrandom}, the argument can be extended to independent, not necessarily identically distributed, real weights whose largest atoms are uniformly bounded away from one. We restrict throughout to Rademacher signs in order to keep the statement and proof focused.
\end{remark}

Three of the present authors prove in a companion
manuscript~\cite{Unsigned} that the (unsigned) adjacency matrix $A$
satisfies
\[
 \mathbf P\{\rank(A)\le n-k\}
 \le 2e^{-c_\lambda nk},
 \qquad 1\le k\le n,
\]
in the same dense regime. The proof requires different methods since,
once the support is fixed, the unsigned model has no remaining
independent signs.

We also prove a quantitative lower bound estimate for the smallest singular value of $A\circ\Xi$ denoted by $s_n(A\circ\Xi)$, which is our second main result. For fixed $r\in (0,1)$ and $d=\lfloor rn \rfloor$, Cook \cite[Proposition 2.4]{cook2017circular} proved the following estimate for a more general matrix $A\circ X$, where $X$ has i.i.d. centered complex entries of unit variance and finite 4-th moment and is independent of $A$: for every fixed $z\in \mathbb{C}$ and every $t>0$,
\[
   \mathbf P_{A,\Xi}\{s_n(A\circ X-z\sqrt{nr}\mathbf{I}_n)\le t/\sqrt{n}\}
        \le C(t+n^{-1/2}),
\]
where $C$ is some positive constant depending on $r, z$ and $\mathbf{I}_n$ is the identity matrix.

For the unshifted matrix, our second main result strengthens Cook's result by replacing the polynomial bound $n^{-1/2}$ with an exponentially small term. In particular, as $t=0$, it improves the polynomial singularity probability bound to an exponentially small bound. This quantitative estimate is an independent result and it supplies the singularity probability bound used in the proof of Theorem~\ref{thm:rank}.

\begin{theorem}\label{thm:ssv}
Fix $\lambda\in(0,1/2]$. There exist constants $C_{\ref{thm:ssv}},c_{\ref{thm:ssv}}>0$, depending only on $\lambda$, such that the following holds. If  $\min(d,n-d)\ge\lambda n$, 
then, for every $\kappa\ge0$,
\[
        \mathbf P_{A,\Xi}\{s_n(A\circ\Xi)\le\kappa\}
        \le C_{\ref{thm:ssv}}\kappa\sqrt n
        +e^{-c_{\ref{thm:ssv}}n}.
\]
\end{theorem}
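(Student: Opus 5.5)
We follow the standard Rudelson--Vershynin scheme, conditioning on the support $A$ and using only the randomness of the signs $\Xi$. Write $B=A\circ\Xi$. The first step is to restrict to a typical support. With probability at least $1-e^{-cn}$, the uniform matrix $A\in\mathcal M_{n,d}$ is pseudo-random in the dense sense: for all sets $I,J\subset[n]$ with $|I|,|J|\ge\delta n$, the number of ones in $A_{I\times J}$ is at least $c_\lambda|I||J|$. This follows from known discrepancy and edge-count estimates for dense random regular digraphs, obtained via switchings or by comparison with the Bernoulli($d/n$) model, which costs only a factor $e^{o(n)}$ that is absorbed. Given such $A$, the matrix $B$ has independent symmetric entries with variance pattern $A$, and $\|B\|\le C\sqrt n$ with probability $1-e^{-cn}$ by subgaussian concentration.

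We then split the sphere into compressible and incompressible vectors, $\Comp(\delta,\rho)\cup\Incomp(\delta,\rho)$.

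\textbf{Compressible vectors.} Fix $x\in\Comp$. Its mass sits on a set $J$ of size at most $\delta n$, with $|x_J|$ of order one. By the pseudo-randomness of $A$, at least $c n$ rows $i$ have $\sum_{j\in J}a_{ij}x_j^2\ge c$. For each such row, $\langle B_i,x\rangle$ is a Rademacher sum with non-negligible variance. We want a small-ball bound of the form $\mathbf P(|\langle B_i,x\rangle|\le c)\le 1-c'$ that holds uniformly. Rademacher sums can be lattice-like, so we obtain this from the Paley--Zygmund inequality applied to the sum. Tensorizing over the $cn$ good rows gives $\mathbf P(\|Bx\|\le c\sqrt n)\le e^{-cn}$. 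A net argument over $\Comp$ (net size $(C/\delta\rho)^{\delta n}$ with $\delta$ small) then yields $\inf_{x\in\Comp}\|Bx\|\ge c\sqrt n$ with probability $1-e^{-cn}$. The same argument applies to $B^{\mathsf T}$, since $A^{\mathsf T}$ is also uniform on $\mathcal M_{n,d}$.

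\textbf{Incomplete vectors.} We use the invertibility-via-distance lemma:
\[
\mathbf P\Bigl(\inf_{x\in\Incomp}\|Bx\|\le \kappa\rho/\sqrt n\Bigr)\le \frac1{\delta n}\sum_k\mathbf P(\dist(B_k,H_k)\le\kappa),
\]
where $B_k$ is the $k$-th column and $H_k$ is the span of the remaining columns. Let $y$ be a unit normal to $H_k$. Then $\dist(B_k,H_k)\ge|\langle B_k,y\rangle|=|\sum_{i}a_{ik}\xi_{ik}y_i|$. Conditioned on $A$ and on the other columns, $y$ is fixed and the signs $\xi_{ik}$ are fresh. Two facts about $y$ are needed:
\begin{enumerate}
\item With probability $1-e^{-cn}$, $y$ is incompressible, by the compressible step applied to $B^{\mathsf T}$ with column $k$ removed.
\item $y$ restricted to the support of column $k$ (which has $d\ge\lambda n$ indices) still carries a constant fraction of its spread coordinates. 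This follows from pseudo-randomness, taken jointly with the row structure.
\end{enumerate}
The Rudelson--Vershynin small-ball inequality via the LCD then gives
\[
\mathbf P(|\langle B_k,y\rangle|\le\kappa)\le C\kappa+C/\LCD+e^{-cn}.
\]

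\textbf{Main obstacle: $\LCD(y)\ge e^{cn}$.} We must show that, with probability $1-e^{-cn}$, the normal vector has exponentially large LCD (restricted to the support). The plan is a level-set net argument. For each level $D\in[c\sqrt n,e^{cn}]$, cover incompressible vectors with LCD about $D$ by a net of size $(CD/\sqrt n)^n$. For each such $x$, bound $\mathbf P(\|B_{[n]\setminus k}^{\mathsf T}x\|\le \epsilon\sqrt n)\le (C/D+C\epsilon)^{n-1}$, using row-wise (column-wise) LCD small-ball estimates and tensorization.

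The difficulty is that the support pattern in each column differs, so the LCD of $x$ restricted to different supports may vary. We handle this by using the pseudo-random property to show that, for most columns $j$, $x$ restricted to $\supp(A_{\cdot j})$ has regularized LCD comparable to that of $x$. Concretely, we pass to the regularized LCD over spread parts $x_I$ with $|I|\ge\delta n$, and use the fact that each column support meets $I$ in at least $c n$ places. Union bounding over levels gives exponential tails. Finally, combining the compressible and incompressible estimates and rescaling $\kappa$ gives the bound $C\kappa\sqrt n+e^{-cn}$ in Theorem~\ref{thm:ssv}.
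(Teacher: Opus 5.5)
Your reduction to a typical support and your compressible step are sound. Once $A$ is fixed, $\|B\|\le C\sqrt n$ holds, so the standard net argument is a legitimate substitute for the paper's random-rounding net (Lemma~\ref{lem:rr}). One small correction: comparing the regular model with the Bernoulli model over $\delta n$ rows costs $e^{O(n\log n)}$, not $e^{o(n)}$. This is harmless, because the Bernoulli discrepancy failure probability is $e^{-cn^2}$.

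The genuine gap is in the incompressible part, exactly where you locate the main obstacle. The small-ball bound for $\langle B^{(j)},x\rangle=\sum_{i\in T_j^*}\xi_{ij}x_i$ is governed by $\LCD(x|_{T_j^*})$ and $\|x|_{T_j^*}\|_2$. It is not governed by the (regularized) LCD of $x$ that indexes your nets. Your bridge, that for most $j$ the restriction has LCD comparable to that of $x$ because $T_j^*$ meets a spread set $I$ in $cn$ places, is not justified. Discrepancy of $A$ is purely combinatorial and says nothing about the arithmetic structure of $x$ on $T_j^*\cap I$. A vector with huge LCD can have small LCD on a linear-size subset of its coordinates.

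The bridge is also quantitatively too weak. Discrepancy over sets of size at least $\delta n$ can leave up to $\delta n$ exceptional columns. Dropping $\delta n$ factors from $(C/D)^{n-1}$, against a net of size $(CD/\sqrt n)^n$, costs a factor $D^{\delta n}$, which is fatal once $D$ is exponential in $n$. You need all but $O(1)$ columns to be good.

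The paper obtains this in three steps:
\begin{enumerate}
\item The covering condition (iii) of Definition~\ref{def:well-spread} forces $\|x|_{T_i}\|_2\ge a_{\ref{lem:qlcd-lower}}$ for all but fewer than $Q$ supports (Lemma~\ref{lem:qlcd-lower}).
\item It indexes the level sets by $\QLCD$, the $Q$-th smallest restricted LCD, so at most $2Q$ rows are lost in tensorization.
\item It builds the net from a single support $T$ with restricted LCD at most $2D$, paying only $\mu^{-(n-|T|)}$ on $T^c$. Since $|T|\ge\lambda n$, net size times small-ball probability is $(C\mu^\lambda)^n$, up to a factor of order $nD^{2Q+1}\mu^{-2Q-1}$. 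That factor is absorbed by taking $\chi\ll\mu$ (Lemma~\ref{lem:qlcd-net}, Proposition~\ref{prop:small}).
\end{enumerate}

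Your item (2) has a gap of the same kind. For a fixed column $k$, the set $[n]\setminus T_k^*$ has $n-d\ge\lambda n$ elements. An incompressible vector may therefore have all its spread coordinates outside $T_k^*$, and no deterministic property of $A$ excludes this for the particular random normal $y$ to $H_k$. Even the mass bound $\|y|_{T_k^*}\|_2\ge c$ amounts to a lower bound of order $\sqrt n$ on the smallest singular value of the $(n-d)\times(n-1)$ submatrix of $B$ with rows $[n]\setminus T_k^*$ and columns $[n]\setminus\{k\}$. That is a rectangular invertibility statement in its own right. You would also need it, and the LCD bound, with failure probability $e^{-cn}$ for every $k$, since the distance lemma only averages the $n$ column terms.

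The paper never proves goodness for a fixed row. It works with the singular vectors $x(B)$ and $y(B)$. It builds $\|x(B)|_{T_i}\|_2\ge a_{\ref{lem:qlcd-lower}}$ and $\LCD_{\mu\sqrt n,\gamma}(x(B)|_{T_i})\ge L$ into the event $G_i$. It then averages over the at least $\rho^2\delta n/4$ rows where these hold together with $|y_i(B)|\ge\rho/\sqrt{2n}$. To decouple row $i$ from $x(B)$, it uses an $\mathcal H_i$-measurable proxy $w_H$: the right singular vector of a minimizer of $s_n$ over admissible completions of row $i$. Lemma~\ref{lem:approx-normal} then gives $\dist(M_i,V_H)\ge|\langle M_i,w_H\rangle|/2$.
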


\begin{remark}
    The proof framework of Theorem~\ref{thm:ssv} can be
adapted, with minor modifications, to the case where the Rademacher
entries of $\Xi$ are replaced by i.i.d.\ centered, unit variance real
subgaussian random variables. The constants may then also depend on
the subgaussian norm of the entries. For clarity, we restrict
throughout to Rademacher signs.
\end{remark}




\subsection{Proof strategy}

We first consider the range $k\ge K\log n$ in Theorem~\ref{thm:rank}. Let $Z$ have independent entries which equal $1$ and $-1$ each with probability $d/(2n)$ and vanish otherwise. Conditioning every row and column of $|Z|$ to have sum $d$ gives the distribution of $A\circ\Xi$. This conditioning costs a factor of at most $\exp(C_\lambda n\log n)$. The rank estimate of Polavarapu~\cite{polavarapu2026exponentialrankboundsrandom} gives $\exp(-c_\lambda nk)$ before conditioning, so the required bound follows once $K$ is sufficiently large. This comparison does not retain the required exponent for $k<K\log n$.

For the remaining range, we work with the submatrices $B_I$ formed by the rows indexed by $I\subset[n]$. A sparse vector in the left kernel gives a linear dependence among a small number of rows. Deleting one of these rows preserves the rank and reduces the corank by one. Sparsity allows us to sum the bound from Odlyzko's lemma over the possible supports and retain a factor $e^{-cn}$ at each reduction. The sum of the number of deleted rows and the remaining target corank is preserved, so the induction remains within the required logarithmic range. At each stage, we use this reduction in the sparse case and a separate bound in the complementary case.

We next consider the complementary case, where the left kernel contains no nonzero sparse vector. Fix $A$ and denote the columns of $B_I$ by $X_1,\dots,X_n$. In the complementary case, we estimate the probability that these columns span a fixed subspace $V\subset\bbR^I$ of codimension $h$ whose orthogonal complement contains no nonzero sparse vector. On this event, $V^\perp=\ker(B_I^T)$ and $\operatorname{corank}(B_I)=h$.

The $j$-th column is random only on its support $C_j:=T_j^*(A)\cap I$, so the relevant codimension is $r_j(V):=\dim(P_{C_j}V^\perp)$, where $P_{C_j}$ denotes coordinate projection. Clearly $r_j(V)\le h$, since projection cannot increase dimension. A lower bound comparable to $h$ need not hold for every $j$. We obtain the required bounds by combining the absence of sparse vectors in $V^\perp$ with the covering property of the column supports.

The global codimension $h$ alone does not give a comparable lower bound for every $r_j(V)$. We obtain the required bounds by combining the absence of sparse vectors in $V^\perp$ with the covering property of the column supports.

The covering defect is motivated by the covering condition in the well-spread families of Jain, Sah, and Sawhney~\cite{MR4523249}. In our setting, this condition requires the union of any $Q$ column supports to cover all but at most $\eta n$ coordinates. The probability that this condition does not hold is at most $e^{-cn}$. This bound is sufficient for the smallest singular value estimate, but not for the rank bound $e^{-cnk}$ when $k$ grows. We therefore need to distinguish among supports that do not satisfy the covering condition.

For this purpose, we define $\operatorname{D}(A)$ as the largest number of pairwise disjoint sets of $Q$ column indices such that, for each set, the union of the corresponding supports leaves more than $\eta n$ coordinates uncovered. In particular, $\operatorname{D}(A)=0$ exactly when
\[
\left|[n]\setminus\bigcup_{j\in J}T_j^*(A)\right|\le\eta n\qquad\text{for every }J\subset[n]\text{ with }|J|=Q,
\]
which is the covering condition in Definition~\ref{def:well-spread}. More generally, $\operatorname{D}(A)$ bounds the number of columns with small effective codimension, while its tail satisfies $\mathbf P_A\{\operatorname{D}(A)\ge r\}\le e^{-cnr}$ for $1\le r\le K\log n$. These two properties allow us to choose a defect threshold depending on the target corank and retain the required exponent after averaging over $A$.

The absence of sparse vectors in $V^\perp$ gives the key bound
\[
\left|\left\{j\in[n]:r_j(V)<\frac hQ\right\}\right|<Q\bigl(\operatorname{D}(A)+1\bigr).
\]
Indeed, $Q$ coordinate restrictions of rank less than $h/Q$ have a common nonzero kernel vector, which would be sparse if the corresponding supports satisfied the covering condition. For sufficiently large $h$, the relative anticoncentration result of Hunter, Kwan, Sauermann, and Sawhney~\cite[Proposition~2.1]{MR5088702} applies on the remaining supports. A thin and thick decomposition, together with the covering defect tail, gives the required bound after averaging over $A$.

We complete the proof by induction, distinguishing at each stage whether the left kernel contains a nonzero sparse vector. In the sparse case, the resulting dependence among a small number of rows allows us to delete a redundant row, reduce the target corank by one, and obtain a factor $e^{-cn}$. In the complementary case, we use the preceding bound. The sum of the number of deleted rows and the remaining target corank is preserved, so the induction stays within the required logarithmic range. Theorem~\ref{thm:ssv} at $\kappa=0$ supplies the initial case and the bounds for bounded corank. Combining the two cases with the large corank bound proves Theorem~\ref{thm:rank}. In fact, the row deletion and covering defect arguments also prove the bound for $1\le k\le\sqrt n$ without using the large corank bound of Polavarapu~\cite{polavarapu2026exponentialrankboundsrandom}.
\subsection{Notation}\label{subsec:notation}

Throughout the paper, $\lambda\in(0,1/2]$ is fixed and $\min(d,n-d)\ge\lambda n$. For a positive integer $N$, set $[N]:=\{1,\dots,N\}$. The Euclidean norm and inner product are denoted by $\|\cdot\|_2$ and $\langle\cdot,\cdot\rangle$, respectively. We denote the unit sphere and unit ball in $\bbR^N$ by $\mathbb S^{N-1}$ and $B_2^N$. For $S\subset\bbR^N$, define $\dist(x,S):=\inf_{y\in S}\|x-y\|_2$.

For a vector $x\in\bbR^N$, its support is $\supp(x):=\{j\in[N]:x_j\ne0\}$. For $T\subset[N]$, the restriction of $x$ to $T$ is denoted by $x|_T$, and $P_T$ denotes the corresponding coordinate projection. For $\delta\in(0,1)$, define $\operatorname{Sparse}(\delta):=\{x\in\bbR^n:|\supp(x)|\le\delta n\}$. If $I\subset[n]$, we set $I^c:=[n]\setminus I$ and identify vectors in $\bbR^I$ with their extensions by zero on $I^c$ when regarded as elements of $\bbR^n$. In particular, sparsity for such vectors is measured relative to $n$, not $|I|$.

Unless otherwise stated, we denote the $i$-th row of a matrix $M$ by $M_i$ and its $j$-th column by $M^{(j)}$. Its Hilbert--Schmidt norm is $\|M\|_{\HS}:=(\sum_{i,j}|M_{ij}|^2)^{1/2}$.

The subscripts in $\mathbf P_A$, $\mathbf P_\Xi$, and $\mathbf P_{A,\Xi}$ specify the sources of randomness. We omit the subscript when the distribution is clear from the context. The same convention applies to expectations. The indicator of an event $\mathcal E$ is denoted by $\mathbf1_{\mathcal E}$. 

The constants $C,c>0$ may change from line to line. Their dependence on fixed parameters is indicated by subscripts or stated explicitly. Constants carrying a reference subscript are fixed in the corresponding result. The notation $f=O_\lambda(g)$ means that $|f|\le C_\lambda g$ for some constant $C_\lambda>0$. For nonnegative quantities, $f\lesssim_\lambda g$ means that $f\le C_\lambda g$; we use $\gtrsim_\lambda$ and $\asymp_\lambda$ analogously. For auxiliary parameters, $\varepsilon\ll\delta$ means that $\varepsilon$ is chosen sufficiently small in terms of $\delta$ and the parameters already fixed. All logarithms are natural unless indicated otherwise. Integer parts are omitted when they do not affect the estimates.

\subsection{Organization}

Section~\ref{sec:prelim} contains the local comparison for regular supports, the well-spread property, and the covering defect bound. In Section~\ref{sec:support-relative}, we relate the covering defect to the effective codimensions and prove the relative anticoncentration result on column supports. Section~\ref{sec:rank} proves Theorem~\ref{thm:rank} by combining the large corank bound, the sparse kernel reduction, and the thin and thick decomposition. Finally, Section~\ref{sec:ssv} proves Theorem~\ref{thm:ssv} independently of the rank argument. Its specialization at $\kappa=0$ provides the singularity bound used in Section~\ref{sec:rank}.

\section{Preliminaries}\label{sec:prelim}
For $A\in \mathcal{M}_{n,d}$, define its row and column supports by
\[
        T_i(A):=\supp(A_i),
        \qquad
        T_j^*(A):=\supp(A^{(j)}),
        \qquad i,j\in[n].
\]
We set $\mathcal T(A)=(T_i(A))_{i\in[n]}$ and $\mathcal T^*(A)=(T_j^*(A))_{j\in[n]}$.

The following comparison will be used both for the quasirandomness estimate and for the covering defect argument in the proof.

\begin{lemma}\label{lem:local-comparison}
Fix $K>0$. There exists $C_{\ref{lem:local-comparison}}>0$, depending only on $\lambda$ and $K$, such that the following holds. Let $1\le m\le K\log n$, and let $\mathcal E$ be an event depending on $m$ prescribed rows of $A$. Let $p_{\rm reg}$ be its probability under the uniform measure on $\mathcal{M}_{n,d}$, and let $p_{\rm Ber}$ be its probability when all entries in the prescribed rows are independent $\operatorname{Ber}(d/n)$ variables. Then
\[
        p_{\rm reg}
        \le
        \exp\bigl(C_{\ref{lem:local-comparison}}(m^2+m\log n)\bigr)
        p_{\rm Ber}.
\]
The same bound holds for events depending on prescribed columns.
\end{lemma}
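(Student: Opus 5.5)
Since $\mathcal E$ depends only on the prescribed rows, say rows $1,\dots,m$, it suffices to compare, for each fixed $m\times n$ $\{0,1\}$-matrix $M$ with row sums $d$, the probability $\mathbf P_{\rm reg}\{A_{[m]}=M\}$ with $\mathbf P_{\rm Ber}\{A_{[m]}=M\}=(d/n)^{md}(1-d/n)^{m(n-d)}$. Under the regular measure, rows outside $[m]$ are not prescribed. So I will write
\[
\mathbf P_{\rm reg}\{A_{[m]}=M\}=\frac{|\mathcal M_{n-m,d}(c(M))|}{|\mathcal M_{n,d}|},
\]
where $\mathcal M_{n-m,d}(c)$ counts $(n-m)\times n$ $\{0,1\}$-matrices with row sums $d$ and column sums $d-c_j$. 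Here $c_j=c_j(M)\in\{0,\dots,m\}$ is the $j$-th column sum of $M$. Events with $p_{\rm Ber}$ supported on matrices with row sums $\ne d$ have $p_{\rm reg}=0$ there, so it suffices to bound $p_{\rm reg}(M)/p_{\rm Ber}(M)$ uniformly over admissible $M$, and then sum over $M\in\mathcal E$. The column version follows by transposition, since $A^T$ is again uniform on $\mathcal M_{n,d}$.

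For the ratio, I will use the known asymptotic enumeration of dense $\{0,1\}$-matrices with given margins, due to Canfield--Greenhill--McKay for the regime where row and column sums are proportional to $n$ and bounded away from $0$ and $n$. It gives
\[
|\mathcal M(r,s)|=\exp\bigl(O_\lambda(1)\bigr)\binom{n}{s_1}\cdots\binom{n}{s_n}\cdot\prod_i\binom{n'}{r_i}\Big/\binom{nn'}{\sum r_i}
\]
up to Gaussian-type correction factors $\exp(-\tfrac12(1-\mu_r)(1-\mu_s)+\cdots)$, whose arguments are $O(1)$ when the margins deviate by $O(m)$ from constant margins. Here $n'$ denotes the number of rows. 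With deviations of size at most $m\le K\log n$, all correction terms are $\exp(O_{\lambda,K}(1+m^2))$. I expect the $m^2$ to arise from terms like $\sum_j (c_j-\bar c)^2/n\cdot$const, together with the dependence on the shape $(n-m)$ versus $n$.

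The main computation is the ratio of the binomial main terms. This is essentially $\prod_j \binom{n-m}{d-c_j}\big/\binom{n}{d}$, times normalizations, compared with $(d/n)^{md}(1-d/n)^{m(n-d)}$. Using $\binom{n-m}{d-c}/\binom nd=\prod(\dots)=(d/n)^{c}(1-d/n)^{m-c}\exp(O_\lambda(m^2/n))$, the product over $j$ gives exactly $p_{\rm Ber}(M)$ up to $\exp(O(m^2))$. The remaining normalizing factors, namely the ratios of $\prod_i\binom{n}{d}/\binom{n^2}{nd}$-type terms between the $n$-row and $(n-m)$-row problems, are $\exp(O(m\log n))$ by Stirling. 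This is where the $m\log n$ term comes from.

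The main obstacle is that invoking exact enumeration with uniform error control needs care. As a fallback, avoiding precise asymptotics, I would use a switching argument. Compare $|\mathcal M_{n-m,d}(c)|$ for two column-deficit vectors $c,c'$ differing by moving one unit between columns: switchings show that the ratio is $\exp(O_\lambda(m/n))$. Averaging over $M$ with fixed row sums then reduces the comparison to a uniform-versus-Bernoulli-conditioned-on-row-sums computation. Conditioning Bernoulli rows on row sum $d$ costs only $\prod_i \mathbf P\{\mathrm{Bin}(n,d/n)=d\}^{-1}=\exp(O(m\log n))$, which again yields the claimed bound.
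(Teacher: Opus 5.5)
Your proposal is correct. Your fallback is essentially the paper's argument, while your primary route uses the Canfield--Greenhill--McKay theorem in a more computational way.

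\textbf{Comparison with the paper.} The paper never evaluates $N_{c(M)}/|\mathcal M_{n,d}|$. It quotes CGM only for a relative bound $e^{C_\lambda m^2}$ between the completion counts of any two admissible column-sum vectors. It then observes that the model with independent uniform rows of weight $d$ gives every admissible $M$ the same mass $\binom nd^{-m}$, while $\sum_M N_{c(M)}=|\mathcal M_{n,d}|$. So $N_{c(M)}$ is within $e^{C_\lambda m^2}$ of its average, and $p_{\rm reg}\le e^{C_\lambda m^2}p_{\rm row}$ with no normalizing constants to track. The $m\log n$ term is then exactly the cost $\mathbf P\{\operatorname{Bin}(n,d/n)=d\}^{-m}\le(C_\lambda\sqrt n)^m$ of conditioning the Bernoulli rows on their sums.

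Your direct route applies CGM twice, to the $(n-m)\times n$ completions and to $|\mathcal M_{n,d}|$, and both of your computations check out. The falling-factorial estimate $\binom{n-m}{d-c}/\binom nd=\tilde\lambda^c(1-\tilde\lambda)^{m-c}e^{O_\lambda(m^2/n)}$, with $\tilde\lambda=d/n$, produces $p_{\rm Ber}(M)e^{O(m^2)}$. Stirling gives $\binom{n^2}{nd}\big/\bigl(\binom{n(n-m)}{d(n-m)}\binom nd^m\bigr)=(1+o(1))\bigl(2\pi n\tilde\lambda(1-\tilde\lambda)\bigr)^{m/2}$, which is the same point-probability factor the paper pays. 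Your route costs more bookkeeping and a second use of CGM, but it gives a two-sided pointwise comparison of $p_{\rm reg}(M)$ and $p_{\rm Ber}(M)$, which is more than the lemma needs.

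A minor slip: in your displayed CGM formula the roles of row and column sums in the binomials are swapped. For an $n'\times n$ matrix with row sums $r_i$ and column sums $s_j$, the main term is $\prod_i\binom n{r_i}\prod_j\binom{n'}{s_j}/\binom{nn'}{\sum_i r_i}$. Your later computation uses the right factors, so nothing downstream is affected.

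\textbf{On the switching fallback.} If you want this version to be self-contained, note that the per-move bound $1+O_\lambda(m/n)$ is the whole difficulty. Write $N(\mathbf c)$ for the number of completions with column deficit vector $\mathbf c$, and move a one from column $a$ to column $b$, so $c'_a=c_a+1$ and $c'_b=c_b-1$. Double counting a single switching gives $N(\mathbf c')/N(\mathbf c)=\bigl((d-c_a)-\mathbf E_{\mathbf c}X\bigr)/\bigl((d-c_b+1)-\mathbf E_{\mathbf c'}X\bigr)$, where $X$ is the number of rows with ones in both columns. So you would additionally need $\mathbf E_{\mathbf c}X-\mathbf E_{\mathbf c'}X=O(m)$. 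A crude $O_\lambda(1)$ bound per move only yields $e^{O(mn)}$ over the up to $md$ moves required. This is why the paper simply quotes CGM for the ratio of completion counts.
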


\begin{proof}
A special case of the dense enumeration theorem of Canfield, Greenhill, and McKay~\cite[Theorem~1]{MR2378856} gives the following. Let $N_{\mathbf c}$ (respectively $N_{\mathbf c'}$) denote the number of $(n-m)\times n$ $\{0,1\}$-matrices with all row sums equal to $d$ and column sums $c_1,\dots,c_n$ (respectively $c'_1,\dots,c'_n$), where $c_j,c'_j\in[d-m,d]$. Then, for $m\le K\log n$,
\[
        \max\left\{\frac{N_{\mathbf c}}{N_{\mathbf c'}},\frac{N_{\mathbf c'}}{N_{\mathbf c}}\right\}
        \le
        e^{C_\lambda m^2}.
\]

Let $p_{\rm row}$ denote the probability of $\mathcal E$ when the prescribed rows are sampled independently and uniformly from the vectors in $\{0,1\}^n$ with exactly $d$ ones. Since this model assigns equal probability to every row configuration, the preceding comparison of completion numbers gives
\[
p_{\rm reg}\le e^{C_\lambda m^2}p_{\rm row}.
\]
Set $\tilde{\lambda}:=d/n$. Conditioning the $m$ prescribed Bernoulli rows to have sum $d$ gives the independent row model. By Stirling's formula, $\mathbf P\{\operatorname{Bin}(n,\tilde{\lambda})=d\}\ge(C_\lambda\sqrt n)^{-1}$ for all sufficiently large $n$. Hence
\[
p_{\rm row}\le\mathbf P\{\operatorname{Bin}(n,\tilde{\lambda})=d\}^{-m}p_{\rm Ber}\le(C_\lambda\sqrt n)^m p_{\rm Ber}.
\]
Combining the two bounds and choosing $C_{\ref{lem:local-comparison}}$ sufficiently large, we obtain
\[
p_{\rm reg}\le e^{C_\lambda m^2}(C_\lambda\sqrt n)^m p_{\rm Ber}\le\exp\bigl(C_{\ref{lem:local-comparison}}(m^2+m\log n)\bigr)p_{\rm Ber}.
\]
The column statement follows by transposition.
\end{proof}

\begin{definition}[Well-spread family]\label{def:well-spread}
Let $\mathcal T=\{\{T_1,\dots,T_n\}\}$ be an indexed family of subsets of $[n]$. We say that $\mathcal T$ is $(Q,\eta,\theta)$-well-spread if
\begin{enumerate}
\item $|T_i|\ge\theta n$ for every $i\in[n]$;
\item $|T_i\cap T_j|\ge\eta n$ for all distinct $i,j\in[n]$;
\item for every collection of distinct indices $i_1,\dots,i_Q$,
\[
        \left|[n]\setminus\bigcup_{r=1}^Q T_{i_r}\right|
        \le\eta n.
\]
\end{enumerate}
\end{definition}

The deterministic condition above is typical for dense random regular supports.

\begin{proposition}\label{prop:support}
Fix $\lambda\in(0,1/2]$ and $0<\eta\le\lambda^2/16$. There exist $Q=Q(\lambda,\eta)\in\mathbb N$ and $c_{\ref{prop:support}}>0$ such that, with $\theta:=\lambda$,
\[
\mathbf P_A\left\{
        \mathcal T(A)\text{ and }\mathcal T^*(A)
        \text{ are }(Q,\eta,\theta)\text{-well-spread}
\right\}
\ge
1-e^{-c_{\ref{prop:support}}n}.
\]
\end{proposition}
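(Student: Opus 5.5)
We verify the three conditions of Definition~\ref{def:well-spread} for $\mathcal T(A)$ and then use the transpose symmetry: $A^T$ is also uniform on $\mathcal M_{n,d}$, so $\mathcal T^*(A)$ has the same law. A union bound then gives the statement.

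\emph{Conditions (i) and (ii).} Condition (i) is deterministic. Every row has exactly $d\ge\lambda n=\theta n$ ones.

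For condition (ii), fix distinct $i,j$. The event $\{|T_i\cap T_j|<\eta n\}$ depends on two prescribed rows, so Lemma~\ref{lem:local-comparison} with $m=2$ bounds its probability by $e^{C\log n}$ times the Bernoulli probability. In the Bernoulli model $|T_i\cap T_j|\sim\operatorname{Bin}(n,(d/n)^2)$, whose mean is at least $\lambda^2 n$. Since $\eta\le\lambda^2/16$, a Chernoff bound gives probability at most $e^{-c\lambda^2 n}$. A union bound over $n^2$ pairs keeps the total at most $e^{-cn}$ for large $n$.

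\emph{Condition (iii).} Set $Q:=\lceil C_0\log(1/\eta)/\lambda\rceil$, with a constant $C_0$ fixed below. For a fixed $Q$-tuple of rows, apply Lemma~\ref{lem:local-comparison} with $m=Q=O_{\lambda,\eta}(1)$. The comparison factor is then $e^{C(Q^2+Q\log n)}=n^{O(1)}$.

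In the Bernoulli model, each coordinate is uncovered independently with probability $(1-d/n)^Q\le e^{-\lambda Q}\le\eta^{C_0}$. The number of uncovered coordinates is therefore $\operatorname{Bin}(n,\eta^{C_0})$. Its probability of exceeding $\eta n$ is at most
\[
\binom{n}{\eta n}\eta^{C_0\eta n}\le (e/\eta)^{\eta n}\eta^{C_0\eta n}=\exp\bigl(-\eta n((C_0-1)\log(1/\eta)-1)\bigr).
\]
The union bound is over $\binom nQ\le n^Q$ tuples. This costs another polynomial factor, which is absorbed once $C_0\ge3$ and $n$ is large in terms of $\lambda,\eta$.

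\emph{Small $n$ and the main obstacle.} For small $n$, the claim holds trivially by shrinking $c_{\ref{prop:support}}$, since the probability bound $1-e^{-cn}$ becomes vacuous when $c$ is small relative to the bounded range of $n$.

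The main obstacle is the one already handled by Lemma~\ref{lem:local-comparison}: rows of a uniform regular matrix are not independent. The point to check is that $m=Q$ is a constant, so the comparison cost is only polynomial in $n$. That cost is dominated by the $e^{-c_{\lambda,\eta}n}$ Bernoulli tails, and those tails are linear in $n$ because $\eta$ is fixed.
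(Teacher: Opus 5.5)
Your argument is the paper's proof. Condition (ii) uses the binomial Chernoff bound, Lemma~\ref{lem:local-comparison} with $m=2$, and a union bound over pairs. Condition (iii) uses a union bound over $\lceil\eta n\rceil$-sets of jointly uncovered coordinates with $\lambda Q\gtrsim\log(1/\eta)$, the comparison lemma with $m=Q$, and a union bound over $n^Q$ tuples. Transposition handles $\mathcal T^*(A)$. Your choice $Q=\lceil C_0\log(1/\eta)/\lambda\rceil$ in place of the paper's $\lambda Q\ge 8\log(e/\eta)$ is cosmetic.

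The one incorrect step is your small-$n$ paragraph. Shrinking $c$ never makes the bound vacuous, since $1-e^{-cn}>0$ for every $c>0$ and $n\ge1$. It only helps at those finitely many $n$ where the well-spread probability is already positive, and that probability can be zero.

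For example, take $\lambda=1/2$, $n=4$, $d=2$ and any $\eta\in(0,1/64]$. Condition (ii) requires $|T_i\cap T_j|\ge\eta n>0$, that is, pairwise intersecting rows. No $2$-regular $4\times4$ matrix has this property. If the two rows containing column $4$ are $\{4,a\}$ and $\{4,b\}$, the other two rows avoid column $4$ and must contain both $a$ and $b$, so column $a$ has sum exceeding $2$.

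So the proposition, like the paper's own proof, must be read for $n\ge n_0(\lambda,\eta)$. Replace your small-$n$ remark with that qualification.
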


\begin{proof}
Set $\tilde{\lambda}:=d/n$. The first condition is deterministic. For the second, fix $i\ne j$. In the Bernoulli model, if $U_i,U_j$ denote the two row supports, then $|U_i\cap U_j|\sim\operatorname{Bin}(n,\tilde{\lambda}^2)$. 
Since $\eta\le\lambda^2/16$, Chernoff's inequality gives
\[
        \mathbf P_{\rm Ber}\{|U_i\cap U_j|\le\eta n\}
        \le e^{-cn}.
\]
Lemma~\ref{lem:local-comparison}, followed by a union bound over the pairs of rows, gives an exponentially small failure probability.

Choose $Q$ sufficiently large so that 
        $\lambda Q\ge8\log(e/\eta)$. 
Fix distinct $i_1,\dots,i_Q$. If
\[
        \left|[n]\setminus\bigcup_{r=1}^QU_{i_r}\right|
        \ge\eta n,
\]
then there exists $J\subset[n]$ with $|J|=\lceil\eta n\rceil$ on which all $Q$ Bernoulli rows vanish. Hence
\begin{align*}
&\mathbf P_{\rm Ber}\left\{
        \left|[n]\setminus\bigcup_{r=1}^QU_{i_r}\right|
        \ge\eta n
\right\} \\
&\qquad\le
\binom{n}{\lceil\eta n\rceil}(1-\tilde{\lambda})^{Q\lceil\eta n\rceil}
\le
\exp\left(\eta n\log(e/\eta)-\lambda Q\eta n+O_\eta(\log n)\right)
\le e^{-cn},
\end{align*}
where the last inequality follows from $\lambda Q\ge8\log(e/\eta)$. Lemma~\ref{lem:local-comparison} and a union bound over the at most $n^Q$ choices of the row indices still leave an exponentially small failure probability. Applying the same argument to $A^T$ completes the proof.
\end{proof}
For the smallest singular value estimate, Proposition~\ref{prop:support} allows us to restrict to supports $A\in \mathcal{M}_{n,d}$ for which both $\mathcal T(A)$ and $\mathcal T^*(A)$ are well-spread except on an event of probability at most $e^{-cn}$. We can then fix any such support and prove the required bound with respect to the signs. This bound is sufficient for Theorem~\ref{thm:ssv}, but is too large for the rank bound $e^{-cnk}$ in Theorem~\ref{thm:rank} when $k$ grows. Therefore, we need a more precise decomposition of the supports that retains the dependence on $k$.

The following covering defect refines the covering property in Definition~\ref{def:well-spread} by recording the maximal number of pairwise disjoint failures.
\begin{definition}[Covering defect]\label{def:defect}
A set $J\subset[n]$ with $|J|=Q$ is called a $(Q,\eta)$-cover-deficient set if
\[
        \left|
        [n]\setminus
        \bigcup_{j\in J}T_j^*(A)
        \right|
        >
        \eta n.
\]
The covering defect of $A$, denoted by $\operatorname{D}(A)$, is the largest integer $r\ge0$ for which there exist $r$ pairwise disjoint
$(Q,\eta)$-cover-deficient sets.
\end{definition}

The next lemma gives an exponential tail bound for the covering defect.
This estimate retains the $nk$ scale after averaging over the random
support.

\begin{lemma}\label{lem:defect-tail}
For every fixed $K>0$, there exists
$c_{\ref{lem:defect-tail}}>0$, depending only on $\lambda,\eta,Q$, and $K$, such that, for every $1\le r\le K\log n$, 
one has
\[
        \mathbf P_A\{\operatorname{D}(A)\ge r\}
        \le
        e^{-c_{\ref{lem:defect-tail}}nr}.
\]
\end{lemma}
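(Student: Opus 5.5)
If $\operatorname{D}(A)\ge r$, then there are pairwise disjoint sets $J_1,\dots,J_r\subset[n]$ with $|J_s|=Q$, each of which is $(Q,\eta)$-cover-deficient. The plan is a union bound over such tuples combined with the column version of Lemma~\ref{lem:local-comparison}. The number of choices of $(J_1,\dots,J_r)$ is at most $n^{Qr}=\exp(Qr\log n)$. For a fixed tuple, the event
\[
\mathcal E_{J_1,\dots,J_r}:=\bigcap_{s=1}^r\Bigl\{\Bigl|[n]\setminus\bigcup_{j\in J_s}T_j^*(A)\Bigr|>\eta n\Bigr\}
\]
depends only on the $m:=Qr$ prescribed columns indexed by $J_1\cup\dots\cup J_r$. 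Since $r\le K\log n$, we have $m\le QK\log n$. Applying Lemma~\ref{lem:local-comparison} with $K$ replaced by $QK$ gives
\[
\mathbf P_A(\mathcal E_{J_1,\dots,J_r})\le\exp\bigl(C(Q^2r^2+Qr\log n)\bigr)\,p_{\rm Ber},
\]
where $C$ depends on $\lambda,Q,K$.

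In the Bernoulli model the $m$ columns are independent. Since the $J_s$ are disjoint, the $r$ events in the intersection are independent, so $p_{\rm Ber}$ factorizes into a product of $r$ single-set probabilities. Each factor is bounded exactly as in the proof of Proposition~\ref{prop:support}, applied to columns:
\[
\binom{n}{\lceil\eta n\rceil}(1-\tilde\lambda)^{Q\lceil\eta n\rceil}\le e^{-c_0n},
\]
with $c_0=c_0(\lambda,\eta)>0$. This uses the choice $\lambda Q\ge 8\log(e/\eta)$ from Proposition~\ref{prop:support}, and I take the same $Q$ here. Hence $p_{\rm Ber}\le e^{-c_0nr}$.

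Combining the two bounds,
\[
\mathbf P_A\{\operatorname{D}(A)\ge r\}\le\exp\bigl(Qr\log n+CQ^2r^2+CQr\log n-c_0nr\bigr).
\]
Using $r\le K\log n$, the error terms are $O_{\lambda,\eta,Q,K}(r\log n)$. Since $r\log n\cdot(\log n)/n\to0$, they are at most $\tfrac{c_0}{2}nr$ for $n\ge n_0(\lambda,\eta,Q,K)$, which gives the claim with $c_{\ref{lem:defect-tail}}=c_0/2$. For $n<n_0$ the statement can be arranged by shrinking the constant, provided $\mathbf P_A\{\operatorname{D}(A)\ge r\}<1$. If that fails for small $n$, I would treat the statement, as is customary in the paper, as asserted for $n$ sufficiently large.

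The main obstacle is the quadratic term $m^2=Q^2r^2$ in the comparison lemma. It is harmless only because $r\le K\log n$ gives $r^2\le Kr\log n=o(nr)$. This is exactly why the lemma is restricted to this range, and it requires the comparison lemma to be valid for $m$ up to $QK\log n$, which holds by taking its $K$ to be $QK$.
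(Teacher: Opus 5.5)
Your proposal is correct and follows essentially the same route as the paper: a union bound over disjoint cover-deficient $Q$-tuples, the column version of Lemma~\ref{lem:local-comparison} with $K$ replaced by $QK$ (so that the $m^2$ term is $O((\log n)^2)$), and the bound $\binom{n}{\lceil\eta n\rceil}(1-\tilde\lambda)^{Q\lceil\eta n\rceil}$ under $\lambda Q\ge 8\log(e/\eta)$. The only difference is bookkeeping. The paper fixes both the $J_\ell$ and the uncovered row sets $R_\ell$, then applies the comparison to each zero-pattern event. You instead compare the whole event for a fixed $J$-tuple once, and handle the $R_\ell$ inside the Bernoulli model via independence across the disjoint column blocks. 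This reordering is harmless, and the two arguments reach the same conclusion (for all sufficiently large $n$, as in the paper).
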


\begin{proof}
Suppose that $\operatorname{D}(A)\ge r$. Then there exist pairwise
disjoint sets $J_1,\dots,J_r\subset[n]$, $|J_\ell|=Q$, such that every $J_\ell$ is $(Q,\eta)$-cover-deficient. Hence, for each $\ell\in[r]$, there exists a set $R_\ell\subset[n]$, $|R_\ell|=\lceil\eta n\rceil$, such that $A_{ij}=0$ for every $i\in R_\ell$ and $j\in J_\ell$. 

Fix the sets $J_1,\dots,J_r$ and $R_1,\dots,R_r$. In the Bernoulli
model, the corresponding event has probability
\[
\mathbf P_{\rm Ber}
\left\{
        A_{ij}=0:
        i\in R_\ell,\ j\in J_\ell,\ \ell\in[r]
\right\}=
        (1-\frac{d}{n})^{Qr\lceil\eta n\rceil}\le
        e^{-\lambda Q\eta nr}.
\]
The number of possible choices of the column sets and the uncovered row
sets is at most
\[
\begin{aligned}
n^{Qr}
\binom{n}{\lceil\eta n\rceil}^{r}
&\le
\exp\left(
        Qr\log n
        +
        \eta nr\log(e/\eta)
        +
        O_\eta(r\log n)
\right).
\end{aligned}
\]
For each fixed choice, the event depends on the $Qr$ columns indexed by $J_1\cup\cdots\cup J_r$. 
Since $Qr\le QK\log n$, the column version of Lemma~\ref{lem:local-comparison}, with the constant $K$ there replaced by $QK$, gives a comparison factor of at most
\[
\begin{aligned}
\exp\left(
        C_{\ref{lem:local-comparison}}
        \bigl((Qr)^2+Qr\log n\bigr)
\right)
&=
\exp\left(
        O_{\lambda,Q,K}\bigl((\log n)^2\bigr)
\right).
\end{aligned}
\]
Taking the union bound over all choices, we obtain
\[
\log
\mathbf P_A\{\operatorname{D}(A)\ge r\}
\le -\lambda Q\eta nr + \eta nr\log(e/\eta) + Qr\log n + O_\eta(r\log n)+ O_{\lambda,Q,K}\bigl((\log n)^2\bigr). 
\]
Choose $Q$ sufficiently large so that $\lambda Q\ge8\log(e/\eta)$. It follows that, for all sufficiently large $n$,
\[
        \mathbf P_A\{\operatorname{D}(A)\ge r\}
        \le
        e^{-c_{\ref{lem:defect-tail}}nr}
\]
for some
$c_{\ref{lem:defect-tail}}>0$ depending only on
$\lambda,\eta,Q$, and $K$.
\end{proof}

\section{Relative anticoncentration on regular supports}\label{sec:support-relative}

We first record the high-dimensional relative anticoncentration result of Hunter, Kwan, Sauermann, and Sawhney \cite[Proposition~2.1]{MR5088702}. Fix $p\in(0,1/32]$, and let $\nu_p$ be the law on $\{-1,0,1\}$ given by $\nu_p(1)=\nu_p(-1)=p$, 
        $\nu_p(0)=1-2p$. 
\begin{lemma}\label{lem:hunter-relative}
There exist $\gamma_{\ref{lem:hunter-relative}}\in(0,1)$ and $k_{\ref{lem:hunter-relative}}\in\mathbb N$ such that the following holds. Let $r\ge k_{\ref{lem:hunter-relative}}$, let $V\subset \mathbb Q^m$ be a linear subspace of codimension $r$, let $X$ be uniform on $\{\pm1\}^m$, and let $Y\sim\nu_p^{\otimes m}$. Then
\[
        \mathbf P\{X\in V\}
        \le
        \gamma_{\ref{lem:hunter-relative}}^r
        \mathbf P\{Y\in V\}.
\]
\end{lemma}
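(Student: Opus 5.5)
Lemma~\ref{lem:hunter-relative} is quoted from \cite[Proposition~2.1]{MR5088702}, so in the paper it may simply be cited. If I were to reprove it, I would use a Fourier-analytic comparison over a finite field.

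\emph{Reduction.} Clearing denominators, write $V=\ker M$ with $M\in\bbZ^{r\times m}$ of rank $r$ and columns $a_1,\dots,a_m\in\bbZ^r$. Choose a prime $q$ larger than any quantity that can arise, for instance $q>m\max_j\|a_j\|_1$. Then for $x\in\{-1,0,1\}^m$ we have $Mx=0$ over $\bbZ$ if and only if $Mx\equiv0\pmod q$, and $M$ keeps rank $r$ over $\mathbb F_q$. Writing $e_q(t)=e^{2\pi i t/q}$ and averaging over $\theta\in\mathbb F_q^r$,
\[
\mathbf P\{X\in V\}=q^{-r}\sum_{\theta}\prod_{j}\cos\Bigl(\tfrac{2\pi\langle\theta,a_j\rangle}{q}\Bigr),\qquad
\mathbf P\{Y\in V\}=q^{-r}\sum_{\theta}\prod_j\Bigl(1-2p+2p\cos\tfrac{2\pi\langle\theta,a_j\rangle}{q}\Bigr).
\]
Since $p\le1/32$, every factor in the second sum lies in $[1-4p,1]$, so all its terms are positive. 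In the first sum I would bound each term by its absolute value.

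\emph{Level sets.} Let $\|\cdot\|$ be the distance to the nearest integer. Use the elementary bounds
\[
|\cos\pi s|\le e^{-c\|s\|^2},\qquad 1-2p+2p\cos 2\pi s\ge e^{-C p\|s\|^2}.
\]
Define the Bohr-type sets
\[
S(t):=\Bigl\{\theta:\textstyle\sum_j\|2\langle\theta,a_j\rangle/q\|^2\le t\Bigr\},\qquad S'(t):=\Bigl\{\theta:\textstyle\sum_j\|\langle\theta,a_j\rangle/q\|^2\le t\Bigr\}.
\]
The first bound, applied with $s=2\langle\theta,a_j\rangle/q$, gives $\mathbf P\{X\in V\}\le q^{-r}\int_0^\infty e^{-t}\,|S(t/c)|\,dt$. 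The second gives $\mathbf P\{Y\in V\}\ge q^{-r}e^{-Cpt_0}|S'(t_0)|$ for any $t_0$. Since $\theta\mapsto2\theta$ is a bijection of $\mathbb F_q^r$, we have $|S(t)|=|S'(t)|$. The problem therefore becomes a comparison of the sizes of $S'(t)$ at different scales, which is a doubling estimate.

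\emph{Main step.} I expect the main obstacle to be extracting the factor $\gamma^r$, which must come from the $r$-dimensionality of these Bohr sets. The plan is to show that for $t\ge1$ and a small constant $t_0$ depending on $p$,
\[
|S'(t)|\le (C't/t_0)^{r/2}\,|S'(t_0)|\quad\text{only when needed, and more importantly}\quad |S'(t_0)|\ge \gamma_0^{-r}|S'(\epsilon t_0)|.
\]
The first inequality is a sumset covering: $S'(t_0)-S'(t_0)\subset S'(4t_0)$, so Ruzsa covering lets $S'(t)$ be covered by few translates of $S'(t_0)$. The second inequality says that a Bohr set in an $r$-dimensional group grows by a factor exponential in $r$ between scales.

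With these two estimates, the integral for $X$ is dominated by the scale $t\asymp t_0$. Because $p$ is small, the weight $e^{-Cpt_0}$ for $Y$ is close to $1$ at that scale, whereas $e^{-t/c}$ is much smaller. The growth of $|S'|$ between scales then converts this mismatch into a loss of $\gamma^r$ for $X$ relative to $Y$, once $r\ge k_{\ref{lem:hunter-relative}}$ absorbs the polynomial-in-$r$ constants.

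\emph{Fallback.} If the two-scale growth estimate for Bohr sets proves too delicate, I would fall back on the argument of \cite{MR5088702} itself. That argument decomposes a Rademacher vector as a lazy walk plus an independent correction, and it iterates a one-dimensional Halász-type comparison along a chain of $r$ nested subspaces. Each step gains a constant factor, and these multiply to $\gamma^r$.
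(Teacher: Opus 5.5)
\textbf{The citation is fine; the self-contained argument has a genuine gap.} The paper gives no proof of this lemma; it is quoted from \cite[Proposition~2.1]{MR5088702}. Your opening sentence, and your fallback, which again defers to that paper, match what the paper does. The Fourier argument you sketch, however, breaks down at exactly the step you flag as the main one.

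Your reduction is correct. Set $\sigma(\theta):=\sum_j\|\langle\theta,a_j\rangle/q\|^2$ and $Z(s):=\sum_{\theta\in\mathbb F_q^r}e^{-s\sigma(\theta)}$. Then $\mathbf P\{X\in V\}\le q^{-r}Z(c)$ and $\mathbf P\{Y\in V\}\ge q^{-r}Z(c')$ for constants $c>c'$, for example $c=\pi^2/2$ and $c'=-\pi^2\log(1-4p)$. So everything rests on showing $Z(c)\le\gamma^rZ(c')$.

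You then bound the $Y$-side from below at a single \emph{constant} scale, $Z(c')\ge e^{-c't_0}|S'(t_0)|$. This can never yield $\gamma^r$. Trivially $Z(c)\ge e^{-ct_0}|S'(t_0)|$, so the desired inequality would force $e^{-(c-c')t_0}\le\gamma^r$, that is, $t_0\ge r\log(1/\gamma)/(c-c')$. Equivalently, $\log Z(c)-\log Z(c')=-\int_{c'}^{c}\mathbf E_{\mu_s}[\sigma]\,ds$ with $\mu_s\propto e^{-s\sigma}$. So what must be shown is that these tilted measures put all but an exponentially small fraction of their mass on $\{\sigma\ge\beta r\}$. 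In other words, $|S'(t)|$ must grow by a factor exponential in $r$ under bounded dilations of $t$, at all scales up to order $r$.

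Neither of your two estimates provides this.
\begin{itemize}
\item Estimate (b) is easy only for $t_0$ below a constant. There $S'(t)$ is essentially a disjoint union of homothetic ellipsoids with the common shape $\sum_j a_ja_j^T$. Even then it only disposes of $\{\sigma\le\epsilon t_0\}$; the contribution of $\{\epsilon t_0<\sigma\le t_0\}$ is untouched.
\item Estimate (a) points the wrong way. An upper bound of size $(C'/t_0)^{r/2}|S'(t_0)|$ makes the range $t\ge1$ look exponentially \emph{worse}.
\item Ruzsa covering does not give (a), since it bounds $|S'(t)|$ by the doubling of a still larger Bohr set.
\item Estimate (a) is also false in general. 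Take $M=[\,I_r\mid KI_r\,]$ with $K$ large. Each pair $\|\theta_i/q\|^2+\|K\theta_i/q\|^2$ behaves like a two-dimensional quadratic form on scales $K^{-2}\ll t\le 1/4$. Hence $|S'(t)|$ grows like $t^{r}=t^{m/2}$ there, not like $t^{r/2}$.
\end{itemize}

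The missing ingredient is control of these $\ell^2$-Bohr sets at scales $1\lesssim t\lesssim r$. At those scales the terms $\|\langle\theta,a_j\rangle/q\|$ wrap around and the ellipsoidal picture is lost. The bounds must also be uniform in the number $m$ of columns; as the example shows, growth of these sets is naturally governed by $m$ rather than $r$. That uniform, large-scale control is the real content of the cited proposition, and nothing in your sketch supplies it.
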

We also use the following standard bound for the comparison vectors.
\begin{lemma}\label{lem:odlyzko}
Let $Z=(Z_1,\dots,Z_m)$ have independent coordinates satisfying
\[
        \sup_{u\in\bbR}\mathbf P\{Z_j=u\}\le q<1
        \qquad(j\in[m]).
\]
If $W\subset\bbR^m$ is an affine subspace of codimension $r$, then
\[
        \mathbf P\{Z\in W\}\le q^r.
\]
\end{lemma}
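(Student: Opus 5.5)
This is the Odlyzko-type bound in Lemma~\ref{lem:odlyzko}. The plan is to reduce to a statement about $r$ coordinates being determined by the others, and then condition on those others.

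First step: put the affine subspace in a convenient form. Write $W=\{z\in\bbR^m: Mz=b\}$, where $M$ is an $r\times m$ matrix of rank $r$ and $b\in\bbR^r$. Since $M$ has rank $r$, it has $r$ linearly independent columns, say indexed by $S\subset[m]$ with $|S|=r$. The $r\times r$ submatrix $M_S$ is then invertible. For $z\in W$ this gives
\[
z|_S=M_S^{-1}\bigl(b-M_{S^c}z|_{S^c}\bigr).
\]
So on $W$ the coordinates in $S$ are a deterministic function $F(z|_{S^c})$ of the remaining ones. Put differently, for each fixed value of $z|_{S^c}$, the fiber $\{z\in W\}$ is a single point in the $S$-coordinates.

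Second step: condition and use independence. Fix $Z|_{S^c}=w$. Since the coordinates of $Z$ are independent,
\[
\mathbf P\{Z\in W\mid Z|_{S^c}=w\}\le \mathbf P\{Z|_S=F(w)\}=\prod_{j\in S}\mathbf P\{Z_j=F(w)_j\}\le q^r.
\]
Integrating over the law of $Z|_{S^c}$ gives $\mathbf P\{Z\in W\}\le q^r$. This works for arbitrary laws, not only discrete ones: the event $\{Z\in W\}$ is contained in $\{Z|_S=F(Z|_{S^c})\}$, and $F$ is measurable because it is affine, so Fubini applies.

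The only points that need care are two degenerate cases. If $W$ is empty, the bound is trivial. If $r=0$, the bound reads $\mathbf P\{Z\in W\}\le 1$, which also holds. The one genuine step is choosing the coordinate set $S$ from a nonsingular $r\times r$ minor, and that is standard linear algebra. I therefore expect no real obstacle.
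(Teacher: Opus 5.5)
Your proposal is correct and follows the same route as the paper: write $W=\{z:Mz=b\}$ with $M$ of rank $r$, choose $r$ linearly independent columns, and observe that conditional on the remaining coordinates, membership in $W$ pins down those $r$ coordinates. Independence then gives the bound $q^r$. Your extra remarks on measurability and the degenerate cases are harmless additions to the paper's shorter argument.
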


\begin{proof}
Set $W=\{z:Lz=b\}$, where $L$ has rank $r$. After permuting the coordinates, the first $r$ columns of $L$ are linearly independent. Conditional on the remaining coordinates, membership in $W$ determines the first $r$ coordinates uniquely, and the result follows from independence.
\end{proof}

The following notation will be used throughout this section and the next. Fix $A\in \mathcal{M}_{n,d}$ and $I\subset[n]$, and denote by $B_I$ the submatrix
of $B$ formed by the rows indexed by $I$. For $j\in[n]$, set
\[
        C_j:=T_j^*(A)\cap I,
        \qquad
        E_j
        :=
        \left\{
        x\in\bbR^I:
        \supp(x)\subset C_j
        \right\},
\]
and set $X_j\in E_j$ for the $j$-th column of $B_I$.

Independently of $\Xi$, take $Y_1,\dots,Y_n$ to be independent comparison vectors satisfying $Y_j\in E_j$ and $Y_j|_{C_j}\sim\nu_p^{\otimes |C_j|}$ for every $j\in[n]$.
 For a subspace $V\subset\mathbb R^I$, define
\[
        r_j(V)
        :=
        \operatorname{codim}_{E_j}(V\cap E_j)
        =
        \dim(P_{C_j}V^\perp),
\]
where $P_{C_j}$ denotes the coordinate projection onto
$\bbR^{C_j}$. 

The next proposition is the main result of this section. It establishes the high-dimensional relative anticoncentration result for the column supports of the regular model.

\begin{proposition}\label{prop:relative-regular-support}
Let $A\in \mathcal{M}_{n,d}$ and $I\subset[n]$ be fixed. Let $V\subset\bbR^I$ be a rational subspace of codimension $h$, and suppose that $V^\perp$ contains no nonzero vector supported on at most $\delta n$ coordinates. If $h\ge Qk_{\ref{lem:hunter-relative}}$, then, for all but fewer than $Q\bigl(\operatorname{D}(A)+1\bigr)$ indices $j\in[n]$,
\[
        \mathbf P_\Xi\{X_j\in V\}
        \le
        \gamma_{\ref{lem:hunter-relative}}^{h/Q}
        \mathbf P\{Y_j\in V\}.
\]
\end{proposition}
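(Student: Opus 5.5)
The proof splits into a combinatorial step and a probabilistic step. The combinatorial step shows that the set of indices with small effective codimension,
\[
\mathcal B:=\{j\in[n]: r_j(V)<h/Q\},
\]
has fewer than $Q(\operatorname{D}(A)+1)$ elements. The probabilistic step applies Lemma~\ref{lem:hunter-relative} on each support $C_j$ with $j\notin\mathcal B$. I will assume the parameter relation $\eta\le\delta$, so that an uncovered set of size at most $\eta n$ counts as sparse in the sense of the hypothesis on $V^\perp$. I expect this relation to be part of the global choice of parameters in the paper.

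\emph{Step 1: the bad set is small.} Suppose, for contradiction, that $|\mathcal B|\ge Q(\operatorname{D}(A)+1)$. Then $\mathcal B$ contains $\operatorname{D}(A)+1$ pairwise disjoint sets $J_1,\dots,J_{\operatorname{D}(A)+1}$, each of size $Q$. By the maximality in Definition~\ref{def:defect}, these sets cannot all be $(Q,\eta)$-cover-deficient. Hence one of them, say $J$, satisfies
\[
\Bigl|[n]\setminus\bigcup_{j\in J}T_j^*(A)\Bigr|\le\eta n.
\]

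For each $j\in J$, let $K_j:=\{w\in V^\perp: w|_{C_j}=0\}$ be the kernel of $P_{C_j}$ restricted to $V^\perp$. Since $\dim V^\perp=h$ and the image $P_{C_j}V^\perp$ has dimension $r_j(V)$, the subspace $K_j$ has codimension $r_j(V)<h/Q$ inside $V^\perp$. Intersecting the $Q$ subspaces $K_j$, $j\in J$, gives
\[
\dim\bigcap_{j\in J}K_j\ \ge\ h-\sum_{j\in J}r_j(V)\ >\ h-Q\cdot\frac hQ\ =\ 0.
\]
So there is a nonzero $w\in V^\perp$ that vanishes on every $C_j=T_j^*(A)\cap I$ with $j\in J$. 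Since $w$ is also supported in $I$, we get $\supp(w)\subset[n]\setminus\bigcup_{j\in J}T_j^*(A)$. This set has size at most $\eta n\le\delta n$, which contradicts the hypothesis that $V^\perp$ has no nonzero vector supported on at most $\delta n$ coordinates. Therefore $|\mathcal B|<Q(\operatorname{D}(A)+1)$.

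\emph{Step 2: anticoncentration off $\mathcal B$.} Fix $j\notin\mathcal B$. Both $X_j$ and $Y_j$ lie in $E_j$, so $X_j\in V$ if and only if $X_j|_{C_j}\in W_j$, where $W_j\subset\bbR^{C_j}$ is the image of $V\cap E_j$ under the identification $E_j\cong\bbR^{C_j}$. The same equivalence holds for $Y_j$. Because $V$ is rational, $W_j$ is a rational subspace of codimension $r_j(V)$. Moreover $X_j|_{C_j}$ is uniform on $\{\pm1\}^{C_j}$, and $Y_j|_{C_j}\sim\nu_p^{\otimes|C_j|}$.

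Since $j\notin\mathcal B$ and $h\ge Qk_{\ref{lem:hunter-relative}}$, we have $r_j(V)\ge h/Q\ge k_{\ref{lem:hunter-relative}}$. Lemma~\ref{lem:hunter-relative}, applied with $m=|C_j|$ and $r=r_j(V)$, then gives
\[
\mathbf P_\Xi\{X_j\in V\}\le\gamma_{\ref{lem:hunter-relative}}^{r_j(V)}\,\mathbf P\{Y_j\in V\}\le\gamma_{\ref{lem:hunter-relative}}^{h/Q}\,\mathbf P\{Y_j\in V\},
\]
where the last inequality uses $\gamma_{\ref{lem:hunter-relative}}<1$. If the lemma is stated only for integer codimension, I would apply it with $r=\lceil h/Q\rceil$ instead, which is harmless.

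The main obstacle is Step 1. There are two delicate points. The first is to use the maximality of $\operatorname{D}(A)$ correctly to produce a single non-deficient $Q$-set inside $\mathcal B$. The second is to check that the dimension count, which needs a strict inequality $\sum_{j\in J} r_j<h$, really yields a nonzero common kernel vector. One must also check that the uncovered set relative to $[n]$ controls the support of vectors in $\bbR^I$; this holds because sparsity is measured relative to $n$.
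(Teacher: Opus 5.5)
Your proposal is correct and follows the paper's proof essentially step for step. Step 1 is the paper's lemma on effective codimension: your intersection of the kernels $K_j$ inside $V^\perp$ is the same rank count the paper performs for the restriction map $\Phi$ on $V^\perp$, and the paper likewise relies on $\eta<\delta$ from its parameter choices. Step 2 is the paper's column-wise application of the Hunter--Kwan--Sauermann--Sawhney lemma with $r=r_j(V)$, followed by $\gamma<1$; your aside about $\lceil h/Q\rceil$ is unnecessary, since $r_j(V)$ is already an integer and the lemma is applied with that exact codimension.
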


The proof consists of two steps. The first establishes the relative
anticoncentration result on the support of the $j$-th column under the
condition $r_j(V)\ge k_{\ref{lem:hunter-relative}}$. The second shows that $r_j(V)\ge h/Q$
for all but fewer than $Q\bigl(\operatorname{D}(A)+1\bigr)$ columns.

\begin{lemma}\label{lem:column-relative}
Let $A\in \mathcal{M}_{n,d}$ and $I\subset[n]$ be fixed. Let $V\subset\bbR^I$ be a rational subspace. If $r_j(V)\ge k_{\ref{lem:hunter-relative}}$,
then
\[
        \mathbf P_\Xi\{X_j\in V\}
        \le
        \gamma_{\ref{lem:hunter-relative}}^{r_j(V)}
        \mathbf P\{Y_j\in V\}.
\]
\end{lemma}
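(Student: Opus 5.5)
The plan is to reduce the statement to Lemma~\ref{lem:hunter-relative} applied in the coordinate space $\bbR^{C_j}$, with $m:=|C_j|$. Since $X_j$ and $Y_j$ vanish off $C_j$, membership in $V$ is determined by the restriction to $C_j$: setting
\[
V_j:=\{u\in\bbR^{C_j}: \iota_j u\in V\},
\]
where $\iota_j$ is extension by zero from $C_j$ to $I$, we have $\{X_j\in V\}=\{X_j|_{C_j}\in V_j\}$, and likewise for $Y_j$. Here $V_j$ is the image of $V\cap E_j$ under the isomorphism $P_{C_j}|_{E_j}:E_j\to\bbR^{C_j}$. Hence
\[
\operatorname{codim}_{\bbR^{C_j}}V_j=\operatorname{codim}_{E_j}(V\cap E_j)=r_j(V).
\]
For completeness I would verify the identity $\operatorname{codim}_{E_j}(V\cap E_j)=\dim(P_{C_j}V^\perp)$ directly. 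Indeed, $u\in V_j$ iff $\langle \iota_j u,w\rangle=0$ for all $w\in V^\perp$, iff $\langle u,P_{C_j}w\rangle=0$ for all $w\in V^\perp$. Thus $V_j=(P_{C_j}V^\perp)^\perp$ inside $\bbR^{C_j}$.

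Next I will check the hypotheses of Lemma~\ref{lem:hunter-relative}.

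\emph{Rationality.} Since $V$ is rational, $V^\perp$ has a rational basis. Its coordinate projection $P_{C_j}V^\perp$ is therefore spanned by rational vectors, and so is its orthogonal complement $V_j$. So $V_j$ corresponds to a subspace of $\mathbb Q^{m}$ of codimension $r_j(V)$, with the same lattice points, which is all that matters since $X_j$ and $Y_j$ take integer values.

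\emph{Distributions.} Given the fixed $A$, the entries of $X_j|_{C_j}$ are $\xi_{ij}$ for $i\in C_j$, since $A_{ij}=1$ there. These are i.i.d. Rademacher, so $X_j|_{C_j}$ is uniform on $\{\pm1\}^{m}$. By construction, $Y_j|_{C_j}\sim\nu_p^{\otimes m}$.

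\emph{Codimension.} By assumption $r:=r_j(V)\ge k_{\ref{lem:hunter-relative}}$.

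Applying Lemma~\ref{lem:hunter-relative} then gives
\[
\mathbf P_\Xi\{X_j\in V\}=\mathbf P\{X_j|_{C_j}\in V_j\}\le\gamma_{\ref{lem:hunter-relative}}^{r_j(V)}\mathbf P\{Y_j|_{C_j}\in V_j\}=\gamma_{\ref{lem:hunter-relative}}^{r_j(V)}\mathbf P\{Y_j\in V\}.
\]

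There is no real obstacle here. The only care needed is the bookkeeping identifying the relative codimension with $\dim(P_{C_j}V^\perp)$, and confirming that rationality is preserved when passing to the coordinate slice. A degenerate case is $C_j=\emptyset$: then $r_j(V)=0<k_{\ref{lem:hunter-relative}}$, so it is excluded by the hypothesis.
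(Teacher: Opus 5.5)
Your proposal is correct and follows the paper's proof: restrict to the support $C_j$, identify $\{X_j\in V\}$ and $\{Y_j\in V\}$ with membership of the restrictions in the slice $U_j\cong V\cap E_j$, which has codimension $r_j(V)$, and then apply Lemma~\ref{lem:hunter-relative}. Your additional checks are details the paper leaves implicit: the identity $U_j=(P_{C_j}V^\perp)^\perp$, rationality of the slice, and the excluded case $C_j=\emptyset$.
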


\begin{proof}
For $z=(z_i)_{i\in C_j}\in\bbR^{C_j}$, denote by
$\widetilde z\in\bbR^I$ its extension by zero outside $C_j$, and set
\[
        U_j
        :=
        \left\{
        z\in\bbR^{C_j}:
        \widetilde z\in V
        \right\}.
\]
Extending vectors by zero outside $C_j$ identifies $U_j$ with
$V\cap E_j$. Hence,
\[
        \operatorname{codim}_{\bbR^{C_j}}U_j
        =
        \operatorname{codim}_{E_j}(V\cap E_j)
        =
        r_j(V).
\]
Moreover,
\[
        \mathbf P_\Xi\{X_j\in V\}
        =
        \mathbf P_\Xi
        \left\{
        X_j|_{C_j}\in U_j
        \right\},
        \qquad
        \mathbf P\{Y_j\in V\}
        =
        \mathbf P
        \left\{
        Y_j|_{C_j}\in U_j
        \right\}.
\]
Now $X_j|_{C_j}$ is uniform on $\{\pm1\}^{C_j}$ and $Y_j|_{C_j}\sim\nu_p^{\otimes |C_j|}$. 
Lemma~\ref{lem:hunter-relative}, applied in the
subspace $U_j$, gives
\[
        \mathbf P_\Xi\{X_j\in V\}
        \le
        \gamma_{\ref{lem:hunter-relative}}^{r_j(V)}
        \mathbf P\{Y_j\in V\}.
\]
\end{proof}

Lemma~\ref{lem:column-relative} shows that the relevant exponent is the local codimension $r_j(V)$ rather than the global codimension $h$. The next lemma shows that the two quantities are comparable for all but a set of columns whose cardinality is bounded by the covering defect.

\begin{lemma}\label{lem:effective-codimension}
Let $I\subset[n]$, let $V\subset\bbR^I$ be a rational subspace of codimension $h$, and suppose that $V^\perp$ contains no nonzero vector supported on at most $\delta n$ coordinates. Then
\[
        \left|
        \left\{
        j\in[n]:
        r_j(V)<\frac hQ
        \right\}
        \right|
        <
        Q\bigl(\operatorname{D}(A)+1\bigr).
\]
\end{lemma}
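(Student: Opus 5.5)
The plan is to argue by contradiction, combining the maximality in the definition of $\operatorname{D}(A)$ with a dimension count on $V^\perp$. Set $S:=\{j\in[n]: r_j(V)<h/Q\}$. If $h=0$ then $S$ is empty and there is nothing to prove, so assume $h\ge1$ and suppose, towards a contradiction, that $|S|\ge Q(\operatorname{D}(A)+1)$.

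\emph{Step 1: find a non-deficient $Q$-set inside $S$.} Choose $D(A)+1$ pairwise disjoint subsets $J_1,\dots,J_{\operatorname{D}(A)+1}\subset S$, each of size $Q$. By the maximality in Definition~\ref{def:defect}, they cannot all be $(Q,\eta)$-cover-deficient. Hence some $J:=J_\ell$ satisfies
\[
\Bigl|[n]\setminus\bigcup_{j\in J}T_j^*(A)\Bigr|\le\eta n,
\]
and $r_j(V)<h/Q$ for every $j\in J$.

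\emph{Step 2: produce a sparse vector in $V^\perp$.} Consider the linear map
\[
\Phi:V^\perp\to\bigoplus_{j\in J}\bbR^{C_j},\qquad x\mapsto (P_{C_j}x)_{j\in J}.
\]
The image of $\Phi$ lies in $\bigoplus_{j\in J}P_{C_j}V^\perp$, which has dimension $\sum_{j\in J}r_j(V)<Q\cdot h/Q=h=\dim V^\perp$. Therefore $\ker\Phi$ contains some nonzero $x\in V^\perp$. This $x$ vanishes on $\bigcup_{j\in J}C_j=I\cap\bigcup_{j\in J}T_j^*(A)$, and it vanishes on $I^c$ by the convention that vectors of $\bbR^I$ are extended by zero. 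Consequently $\supp(x)\subset[n]\setminus\bigcup_{j\in J}T_j^*(A)$, so $|\supp(x)|\le\eta n$.

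\emph{Step 3: conclude.} Provided the parameters satisfy $\eta\le\delta$, the vector $x$ is a nonzero vector of $V^\perp$ supported on at most $\delta n$ coordinates. This contradicts the hypothesis on $V^\perp$, and so $|S|<Q(\operatorname{D}(A)+1)$.

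The argument is essentially linear algebra. The only point needing care is the parameter compatibility $\eta\le\delta$, which I expect the paper to impose when fixing $\eta\ll\delta$ (with $Q=Q(\lambda,\eta)$ then chosen as in Proposition~\ref{prop:support}). If it is not already imposed, it should be added to the hypotheses. The dimension bound for the image of $\Phi$ uses only that each coordinate of $\Phi$ factors through $P_{C_j}V^\perp$, whose dimension is $r_j(V)$ by definition.
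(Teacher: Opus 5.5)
Your proof is correct and is essentially the paper's argument: the same map $\Phi$ from $V^\perp$ into $\bigoplus_{j\in J}\bbR^{C_j}$, the same dimension count giving a nonzero vector of $V^\perp$ supported off $\bigcup_{j\in J}T_j^*(A)$, and the same appeal to the maximality of $\operatorname{D}(A)$. The only difference is order: the paper first shows that every $Q$-subset of the bad set is cover-deficient and then extracts $\operatorname{D}(A)+1$ disjoint ones. The condition $\eta<\delta$ that you flag is exactly what the paper uses (it writes $\eta n<\delta n$), and it is secured by the choice $\eta<\min\{\lambda^2/16,b_3/4,\delta/2\}$.
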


\begin{proof}
Let
\[
        \mathcal J
        :=
        \left\{
        j\in[n]:
        r_j(V)<\frac hQ
        \right\}.
\]
Suppose that $J=\{j_1,\dots,j_Q\}\subset\mathcal J$ is not a $(Q,\eta)$-cover-deficient set. Consider
\[
        \Phi:
        V^\perp
        \longrightarrow
        \bigoplus_{\ell=1}^Q\bbR^{C_{j_\ell}},
        \qquad
        \Phi(w)
        :=
        \bigl(
        w|_{C_{j_1}},\dots,w|_{C_{j_Q}}
        \bigr).
\]
For every $\ell\in[Q]$, the image of the $\ell$-th component map is
$P_{C_{j_\ell}}V^\perp$. Hence,
\[
        \rank(\Phi)\le
        \sum_{\ell=1}^Q
        \dim(P_{C_{j_\ell}}V^\perp)=
        \sum_{\ell=1}^Q r_{j_\ell}(V)<
        h=\dim(V^\perp).
\]
There exists a nonzero $w\in\ker\Phi$. This vector vanishes on every
$C_{j_\ell}$, and hence
\[
        \supp(w)
        \subset
        I\setminus
        \bigcup_{\ell=1}^Q C_{j_\ell}
        \subset
        [n]\setminus
        \bigcup_{\ell=1}^Q T_{j_\ell}^*(A).
\]
Since $J$ is not $(Q,\eta)$-cover-deficient, the last set has
cardinality at most $\eta n<\delta n$. 
This contradicts the assumption on $V^\perp$.

Thus every $Q$-subset of $\mathcal J$ is
$(Q,\eta)$-cover-deficient. If $|\mathcal J| \ge Q\bigl(\operatorname{D}(A)+1\bigr)$,
then $\mathcal J$ contains
$\operatorname{D}(A)+1$ pairwise disjoint
$(Q,\eta)$-cover-deficient sets, contrary to the definition of
$\operatorname{D}(A)$.
\end{proof}

\begin{proof}[Proof of Proposition~\ref{prop:relative-regular-support}]
Lemma~\ref{lem:effective-codimension} shows that $r_j(V)\ge h/Q$ 
for all but fewer than $Q\bigl(\operatorname{D}(A)+1\bigr)$ indices $j\in[n]$. For every such $j$, by the assumption, we have
        $r_j(V)
        \ge
          h/Q
        \ge
        k_{\ref{lem:hunter-relative}}$. 
Lemma~\ref{lem:column-relative} therefore gives
\[
        \mathbf P_\Xi\{X_j\in V\}
        \le
        \gamma_{\ref{lem:hunter-relative}}^{r_j(V)}
        \mathbf P\{Y_j\in V\}.
\]
Since $0<\gamma_{\ref{lem:hunter-relative}}<1$ and $r_j(V)\ge h/Q$, it follows that
\[
        \mathbf P_\Xi\{X_j\in V\}
        \le
        \gamma_{\ref{lem:hunter-relative}}^{h/Q}
        \mathbf P\{Y_j\in V\}.
\]
\end{proof}

\section{Proof of Theorem~\ref{thm:rank}}\label{sec:rank}
The proof is divided according to whether the corank is above or below a logarithmic threshold. We first handle the large corank range. For the remaining range, we introduce matrices obtained by deleting rows and recursively remove sparse vectors from the left kernel. When the left kernel contains no such vector, we use a thin and thick decomposition
adapted to the support.

\subsection{The large corank regime}
We first consider the regime in which the corank exceeds a logarithmic
threshold. 
\begin{proposition}\label{lem:large-corank}
There exist
$K_{\ref{lem:large-corank}},c_{\ref{lem:large-corank}}>0$, depending only
on $\lambda$, such that, for every $k\ge K_{\ref{lem:large-corank}}\log n$,
one has
\[
        \mathbf P_{A,\Xi}
        \left\{
        \operatorname{corank}(A\circ\Xi)\ge k
        \right\}
        \le
        e^{-c_{\ref{lem:large-corank}}nk}.
\] 
\end{proposition}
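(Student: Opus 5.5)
We prove Proposition~\ref{lem:large-corank} by comparing $B=A\circ\Xi$ with a matrix $Z$ that has fully independent entries. Then we apply the full-range rank estimate of Polavarapu~\cite{polavarapu2026exponentialrankboundsrandom} to $Z$, and absorb the comparison cost into the exponent $nk$ once $k\ge K\log n$.

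\textbf{The comparison model.} Let $Z=(Z_{ij})$ have i.i.d.\ entries with $\mathbf P\{Z_{ij}=\pm1\}=d/(2n)$ and $\mathbf P\{Z_{ij}=0\}=1-d/n$. Then $|Z|$ has i.i.d.\ $\operatorname{Ber}(d/n)$ entries. The signs of the nonzero entries are i.i.d.\ Rademacher and independent of $|Z|$. For every fixed $M\in\mathcal M_{n,d}$, the conditional law of $Z$ given $|Z|=M$ is that of $M\circ\Xi$. The unconditional probability $\mathbf P\{|Z|=M\}=(d/n)^{nd}(1-d/n)^{n(n-d)}$ does not depend on $M$. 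Hence the law of $Z$ conditioned on $\{|Z|\in\mathcal M_{n,d}\}$ is exactly the law of $A\circ\Xi$. Therefore
\[
\mathbf P_{A,\Xi}\{\operatorname{corank}(A\circ\Xi)\ge k\}
\le
\frac{\mathbf P\{\operatorname{corank}(Z)\ge k\}}{\mathbf P\{|Z|\in\mathcal M_{n,d}\}}.
\]

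\textbf{Bounding the denominator.} We need $\mathbf P\{|Z|\in\mathcal M_{n,d}\}\ge e^{-C_\lambda n\log n}$. The crude counting route is enough: $|\mathcal M_{n,d}|\ge n!$, since the $d$-regular matrices include sums of $d$ permutation matrices with disjoint supports. A cleaner lower bound comes from the Canfield--Greenhill--McKay asymptotics~\cite{MR2378856}, which give $|\mathcal M_{n,d}|=\binom nd^{2n}\binom{n^2}{nd}^{-1}e^{O(1)}$. Combining this with $\binom{n^2}{nd}(d/n)^{nd}(1-d/n)^{n(n-d)}\le1$ and $\binom nd(d/n)^d(1-d/n)^{n-d}\ge (C\sqrt n)^{-1}$, we get
\[
\mathbf P\{|Z|\in\mathcal M_{n,d}\}\ge (C_\lambda\sqrt n)^{-2n}\ge e^{-C_\lambda n\log n}.
\]

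\textbf{Bounding the numerator.} The entries $Z_{ij}$ are independent and real. Their largest atom is $\mathbf P\{Z_{ij}=0\}=1-d/n\le1-\lambda<1$, uniformly in $n$. Polavarapu's theorem therefore gives $\mathbf P\{\operatorname{corank}(Z)\ge k\}\le e^{-c_\lambda nk}$ for all $1\le k\le n$. Putting the two bounds together,
\[
\mathbf P_{A,\Xi}\{\operatorname{corank}(A\circ\Xi)\ge k\}\le e^{-c_\lambda nk+C_\lambda n\log n}.
\]
Choosing $K_{\ref{lem:large-corank}}:=2C_\lambda/c_\lambda$ gives the bound $e^{-c_\lambda nk/2}$ whenever $k\ge K_{\ref{lem:large-corank}}\log n$, so we may take $c_{\ref{lem:large-corank}}=c_\lambda/2$.

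\textbf{Main obstacle.} The main point to check is that Polavarapu's result applies uniformly. Its constant must depend only on the bound on the atom probabilities, here $1-\lambda$, and the result must cover entries with an atom at $0$. If needed, its Bernoulli decomposition handles this directly.
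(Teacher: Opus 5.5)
Your proposal is correct and is essentially the paper's proof: condition the independent sparse-sign model $Z$ on $\{|Z|\in\mathcal M_{n,d}\}$ to recover the law of $A\circ\Xi$, lower-bound that event by $e^{-C_\lambda n\log n}$ via Canfield--Greenhill--McKay, apply Polavarapu's bound to $Z$, and absorb the loss once $k\ge K\log n$. Two small slips do not affect the argument. First, the aside that $|\mathcal M_{n,d}|\ge n!$ would suffice is false, since $(d/n)^{nd}(1-d/n)^{n(n-d)}=e^{-\Theta_\lambda(n^2)}$, so only the CGM route works. Second, the largest atom of $Z_{ij}$ is $\max\{1-d/n,\,d/(2n)\}\le\max\{1-\lambda,1/2\}$, and it is not always the atom at $0$.
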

In this range, we compare the signed regular model with a matrix having independent entries and recover the regular support by conditioning.
Set $\tilde{\lambda}:= d/n$, and let $Z=(Z_{ij})_{i,j\in[n]}$ have independent entries satisfying $\mathbf P\{Z_{ij}=0\}=1-\tilde{\lambda}$ and $ \mathbf P\{Z_{ij}=1\} = \mathbf P\{Z_{ij}=-1\} = {\tilde{\lambda}}/{2}$. 

Let $\mathcal R_{n,d}$ denote the event that all row and column sums of $|Z|=(|Z_{ij}|)_{i,j\in[n]}$ are equal to $d$. The next lemma estimates the probability of this event and identifies the conditional distribution of $Z$.

\begin{lemma}\label{lem:regular-conditioning}
There exists
$C_{\ref{lem:regular-conditioning}}>0$, depending only on $\lambda$, such
that
\[
        \mathbf P(\mathcal R_{n,d})
        \ge
        e^{-C_{\ref{lem:regular-conditioning}}n\log n}.
\]
Conditional on $\mathcal R_{n,d}$, the matrix $Z$ has the same distribution
as $A\circ\Xi$.
\end{lemma}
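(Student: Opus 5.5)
Both claims come from one observation: conditional on the support, the law of $Z$ is uniform over sign patterns. Write $Z=|Z|\circ\operatorname{sgn}(Z)$. Under the product measure, $|Z|$ has i.i.d.\ $\operatorname{Ber}(\tilde\lambda)$ entries. Conditional on $|Z|$, the signs on the support of $|Z|$ are i.i.d.\ Rademacher, independent of everything else, and the signs off the support do not affect $Z$.

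\emph{Conditional distribution.} The event $\mathcal R_{n,d}$ depends only on $|Z|$. Every $\{0,1\}$-matrix $M\in\mathcal M_{n,d}$ has exactly $d n$ ones, so it has the same probability $\tilde\lambda^{dn}(1-\tilde\lambda)^{n^2-dn}$ under the Bernoulli product measure. Hence, conditional on $\mathcal R_{n,d}$, the matrix $|Z|$ is uniform on $\mathcal M_{n,d}$. Conditional on $|Z|=M$, the nonzero entries of $Z$ are i.i.d.\ uniform signs. This is exactly the law of $M\circ\Xi$ restricted to the support of $M$, and it is the same as the law of $A\circ\Xi$ given $A=M$. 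It follows that $Z$ conditioned on $\mathcal R_{n,d}$ has the law of $A\circ\Xi$.

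\emph{Lower bound on $\mathbf P(\mathcal R_{n,d})$.} By the previous paragraph,
\[
\mathbf P(\mathcal R_{n,d})=|\mathcal M_{n,d}|\,\tilde\lambda^{dn}(1-\tilde\lambda)^{n^2-dn}.
\]
It therefore suffices to show that $|\mathcal M_{n,d}|\ge e^{-Cn\log n}\bigl(\tilde\lambda^{\tilde\lambda}(1-\tilde\lambda)^{1-\tilde\lambda}\bigr)^{-n^2}$. Two routes are available.
\begin{enumerate}
\item Use the Canfield--Greenhill--McKay asymptotic \cite{MR2378856}, which is already cited in the proof of Lemma~\ref{lem:local-comparison}. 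It gives $|\mathcal M_{n,d}|=\binom nd^{2n}\binom{n^2}{dn}^{-1}e^{O(1)}$ in the dense regime. By Stirling, $\binom nd\ge (C\sqrt n)^{-1}e^{nH(\tilde\lambda)}$ and $\binom{n^2}{dn}\le e^{n^2H(\tilde\lambda)}$. This yields $|\mathcal M_{n,d}|\ge e^{n^2H(\tilde\lambda)-O(n\log n)}$, and multiplying by $e^{-n^2H(\tilde\lambda)}$ gives the claimed bound.
\item Use a self-contained argument. The probability that all row sums equal $d$ is $\mathbf P\{\operatorname{Bin}(n,\tilde\lambda)=d\}^n\ge (C\sqrt n)^{-n}$. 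It would remain to bound the conditional probability that all column sums equal $d$, given that the rows are independent uniform $d$-subsets, by $e^{-Cn\log n}$. This is a local limit estimate for a sum of $n$ i.i.d.\ vectors in $\mathbb Z^{n-1}$, and it is harder to do cleanly.
\end{enumerate}
I would take the first route. The only point to check is that the enumeration theorem applies uniformly for $\lambda n\le d\le (1-\lambda)n$; this holds because the density is bounded away from $0$ and $1$.

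The main obstacle is this counting lower bound. The conditional-law part is a routine symmetry argument. If the asymptotic formula is unavailable in a convenient form, a crude alternative is van der Waerden/Br\`egman-type permanent bounds. Egorychev--Falikman gives the lower bound $|\mathcal M_{n,d}|\ge ((d/n)^n n!)^{d}/\ldots$, which after Stirling is also $e^{n^2H(\tilde\lambda)-O(n\log n)}$. I would cite \cite{MR2378856} and note that only an $e^{O(n\log n)}$ precision is needed.
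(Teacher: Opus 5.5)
Your proposal is correct and follows the paper's proof. Every pair consisting of a support in $\mathcal M_{n,d}$ and a sign pattern on it has the same weight $(\tilde\lambda/2)^{nd}(1-\tilde\lambda)^{n^2-nd}$; this identifies the conditional law and gives $\mathbf P(\mathcal R_{n,d})=|\mathcal M_{n,d}|\,\tilde\lambda^{nd}(1-\tilde\lambda)^{n^2-nd}$, and the Canfield--Greenhill--McKay formula with Stirling then yields $\log\mathbf P(\mathcal R_{n,d})\ge -O_\lambda(n\log n)$. Drop the permanent fallback, however, because it does not give $e^{-Cn\log n}$: any bound of the form $((d/n)^n n!)^d/(\text{overcount})$ is at most $e^{nd+O(n\log n)}$, which is $e^{\Theta(n^2)}$ below $e^{n^2H(\tilde\lambda)}$, and even with the correct van der Waerden factors $((n-k)/n)^n n!$ the Br\`egman overcount still loses $e^{O(n\log^2 n)}$, so rely on the enumeration theorem as the paper does.
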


\begin{proof}
For every $A_0\in \mathcal{M}_{n,d}$, 
        $\mathbf P\{|Z|=A_0\}
        =
        \tilde{\lambda}^{nd}(1-\tilde{\lambda})^{n^2-nd}$, 
which does not depend on $A_0$. Since
\[
        \mathcal R_{n,d}
        =
        \bigsqcup_{A_0\in \mathcal{M}_{n,d}}
        \{|Z|=A_0\},
\]
we have $\mathbf P(\mathcal R_{n,d})
        =
        |\mathcal{M}_{n,d}|
        \tilde{\lambda}^{nd}(1-\tilde{\lambda})^{n^2-nd}$. 
The dense enumeration theorem of Canfield, Greenhill, and McKay~\cite[Theorem~1]{MR2378856}, together with Stirling's formula, gives $\log\mathbf P(\mathcal R_{n,d})
        =
        -n\log n+O_\lambda(n)$. 
This proves the required lower bound.

It remains to identify the conditional distribution. Fix $A_0\in \mathcal{M}_{n,d}$ and a sign configuration
\[
        \varepsilon
        =
        (\varepsilon_{ij})_{(A_{0})_{ij}=1}
        \in\{\pm1\}^{nd}
\]
on the support of $A_0$. Then
\[
\mathbf P\left\{
        |Z|=A_0,\
        (Z_{ij})_{(A_{0})_{ij}=1}=\varepsilon
        \right\}=
        \left(\frac{\tilde{\lambda}}{2}\right)^{nd}
        (1-\tilde{\lambda})^{n^2-nd},
\]
which does not depend on either $A_0$ or $\varepsilon$. Thus,
\[
\mathbf P\left\{
        |Z|=A_0,\
        (Z_{ij})_{(A_{0})_{ij}=1}=\varepsilon
        \,\middle|\,
        \mathcal R_{n,d}
\right\}
=
        \frac{1}{|\mathcal{M}_{n,d}|\,2^{nd}}.
\]
Thus, conditional on $\mathcal R_{n,d}$, the support is uniform on
$\mathcal{M}_{n,d}$ and, given the support, the signs on its nonzero entries are
independent Rademacher variables. Therefore, $Z\mid\mathcal R_{n,d}
        \stackrel{d}{=}
        A\circ\Xi.$ 
\end{proof}

We also use the following rank estimate for matrices with independent,
not necessarily identically distributed, entries.

\begin{lemma}[{\cite[Theorem~1.1]{polavarapu2026exponentialrankboundsrandom}}]\label{lem:polavarapu}
Fix $b\in(0,1)$. There exists
$c_{\ref{lem:polavarapu}}=c_{\ref{lem:polavarapu}}(b)>0$ such that, if $M$ is an $n\times n$
random matrix with independent real entries satisfying
\[
        \sup_{u\in\bbR}
        \mathbf P\{M_{ij}=u\}
        \le b
        \qquad i,j\in[n],
\]
then, for every $1\le k\le n$,
\[
        \mathbf P\{\rank(M)\le n-k\}
        \le
        e^{-c_{\ref{lem:polavarapu}}nk}.
\]
\end{lemma}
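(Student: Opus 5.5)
Lemma~\ref{lem:polavarapu} is quoted from \cite{polavarapu2026exponentialrankboundsrandom}; if I had to reprove it, I would reduce it to the Rademacher framework of Hunter, Kwan, Sauermann, and Sawhney~\cite{MR5088702} by a Bernoulli decomposition. The plan is to mirror, for independent entries, the structure used here for regular supports: a random sign pattern, Lemma~\ref{lem:hunter-relative}, and a sparse/non-sparse split of the kernel.

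\emph{Step 1: Bernoulli decomposition.} If $\sup_u\mathbf P\{M_{ij}=u\}\le b<1$, then one can write
\[
M_{ij}\stackrel{d}{=}\alpha_{ij}+\beta_{ij}\,\delta_{ij}\,\varepsilon_{ij}.
\]
Here $(\alpha_{ij},\beta_{ij})$ is a random pair with $\beta_{ij}\ne0$, $\delta_{ij}\sim\operatorname{Ber}(q)$ with $q=q(b)>0$, $\varepsilon_{ij}$ is a Rademacher sign, and the three pieces are independent. The construction is to pair up two values of $M_{ij}$ lying on opposite sides of a median-type split; with probability $q$ we are at such a pair and the sign chooses between the two values. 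I will condition on all $\alpha,\beta,\delta$. After conditioning, $M=M_0+D\circ\Xi'$, where $M_0$ is fixed, $D$ is a fixed matrix whose support $\{\delta_{ij}=1\}$ is an i.i.d.\ $\operatorname{Ber}(q)$ pattern with nonzero weights, and $\Xi'$ is Rademacher.

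\emph{Step 2: column exposure and the non-sparse kernel.} If $\rank(M)\le n-k$, then some subspace $V$ of codimension $k$ contains every column. Because of $M_0$, the $j$-th column lies in the affine set $V-M_0^{(j)}$ restricted to its random coordinates. I will bound this as in Section~\ref{sec:support-relative}. Lemma~\ref{lem:hunter-relative} applies to the linear part after rescaling coordinates by $\beta_{ij}$, since such rescaling preserves codimension. For the affine shift, I pass to the linear span of $V$ and $M_0^{(j)}$, which costs one unit of codimension. Local codimensions are controlled exactly as in Lemma~\ref{lem:effective-codimension}: the Bernoulli supports satisfy the covering property except with probability $e^{-cnr}$, which is the independent-entry analogue of Lemma~\ref{lem:defect-tail}. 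Then for most columns one gets $\mathbf P\{X_j\in V\}\le\gamma^{k/Q}\mathbf P\{Y_j\in V\}$ with sparse comparison vectors $Y_j$. I then split into thin and thick $V$:
\begin{itemize}
\item if $\prod_j\mathbf P\{Y_j\in V\}$ is already at most $e^{-cnk}$, Lemma~\ref{lem:odlyzko} closes the case after a union bound;
\item otherwise the gain $\gamma^{nk/Q}$ must beat the number of relevant $V$, which I bound by counting the possible spans of comparison columns.
\end{itemize}

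\emph{Step 3: sparse kernel vectors.} If $\ker(M^T)$ contains a $\delta n$-sparse vector, then a small set of rows is dependent. I delete one such row and induct on the corank, as sketched in the paper's own strategy. Each deletion gains $e^{-cn}$ by Lemma~\ref{lem:odlyzko} summed over supports of size at most $\delta n$, since the entries have atoms at most $b$.

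The main obstacle is Step 2 for large $k$, up to order $n$. There the counting of thick subspaces must produce $e^{-cnk}$ uniformly in $k$. The single unit of codimension lost to the affine shift $M_0$ is harmless, but the relative anticoncentration must hold with constants uniform over non-identically distributed weights $\beta_{ij}$. I would try to obtain this by applying Lemma~\ref{lem:hunter-relative} after the diagonal rescaling and checking that it commutes with the comparison measure $\nu_p$.
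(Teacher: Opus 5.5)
The paper does not prove this lemma; it is quoted from \cite{polavarapu2026exponentialrankboundsrandom}. Your outline therefore has to stand on its own, and as written it has a genuine gap at \emph{bounded} corank. Lemma~\ref{lem:hunter-relative} needs local codimension at least $k_{\ref{lem:hunter-relative}}$. The thin/thick estimates also close only for $h$ above a large constant: the bounds $4^n\tau^{d_0s}$ and $2^{3n}\gamma^{hm/Q}$ in Claims~\ref{clm:thin} and~\ref{clm:thick} are useless when $h=O(1)$. So Step~2 says nothing about $\{\operatorname{corank}=h\}$ without sparse left-kernel vectors when $h<h_1$. Consequently the induction in Step~3 has no base case $k=1$, and no bound for these small-$h$ non-sparse terms, which reappear at every inductive stage. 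Compare the case $h<h_1$ in Lemma~\ref{lem:unstructured-corank} and the case $k=1$ in Proposition~\ref{prop:row-deletion}: the paper supplies both from Theorem~\ref{thm:ssv} at $\kappa=0$. In your setting you would need $\mathbf P\{\rank M<n\}\le e^{-cn}$ for arbitrary independent real entries with atoms at most $b$ and no moment assumptions. That is exactly the case $k=1$ of the statement, a substantial result in its own right, and nothing in your outline produces it. After conditioning, the $\alpha_{ij},\beta_{ij}$ are arbitrary reals, so the norm-based LCD and net arguments of Section~\ref{sec:ssv} do not transfer directly. By contrast, the obstacle you single out is not where the difficulty lies. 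For $k\ge\eps n$, Lemma~\ref{lem:odlyzko} alone gives $\binom nk b^{k^2}\le e^{-cnk}$. For independent supports, the covering-defect tail holds for every $r$ with no $\log n$ restriction. Dividing coordinates by $\beta_{ij}$ is harmless, since Lemma~\ref{lem:hunter-relative} is uniform over subspaces.

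Your handling of the shift $M_0$ also breaks exactly where it is used. Passing to $V':=V+\operatorname{span}(M_0^{(j)})$ gives $\mathbf P\{X_j\in V\}\le\gamma^{r_j(V)-1}\mathbf P\{\beta\circ\delta\circ Y\in V'\}$. The right-hand event is not of the form $\{Y_j\in V\}$ for a fixed comparison vector. Yet the swap \eqref{eq:thick-lower} needs comparison vectors landing in $V$ itself, so that $\operatorname{span}(X_1,\dots,X_n,\widetilde Y_1,\dots,\widetilde Y_m)=V$ and the subsequent basis count applies. Taking $Y_j:=M_0^{(j)}+\beta\circ\delta\circ Y$ instead would require an affine version of Lemma~\ref{lem:hunter-relative}, which is false. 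For entries uniform on $\{0,2\}$ and $V$ cut out by $r$ block-sum equations, $\mathbf P\{\mathbf 1+\varepsilon\in V\}=2^{-m}$ while $\mathbf P\{\mathbf 1+Y\in V\}=p^m$.

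The right device is homogenization. Since $V$ is linear and $\varepsilon\stackrel d=-\varepsilon$, one has $\mathbf P\{M_0^{(j)}+\beta\circ\delta\circ\varepsilon\in V\}=\mathbf P\{\varepsilon_0M_0^{(j)}+\beta\circ\delta\circ\varepsilon\in V\}$ for an extra independent sign $\varepsilon_0$. Apply Lemma~\ref{lem:hunter-relative} to $(\varepsilon_0,\varepsilon|_{C_j})$, replacing the preimage of $V$ by the span of its $\{-1,0,1\}$-points to meet the rationality hypothesis. This loses no codimension, since the preimage has codimension at least $r_j(V)$. It produces the comparison vector $\zeta_jM_0^{(j)}+\beta\circ\delta\circ Y_j$ with $\zeta_j\sim\nu_p$, which is tested against $V$ itself and still obeys \eqref{eq:mixture-odlyzko}.

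Two smaller points. First, the thin case cannot be closed by a union bound over $V$, because there are far more than $e^{cnk}$ candidate subspaces. You need the paper's device of defining thinness through $e_s(V)$ and summing $\mathbf P\{\operatorname{span}(X_j:j\in I_0)=V\}$ over $V$. Second, in Step~1 the pair $(\alpha,\beta)$ cannot in general be independent of $\delta$; for $M_{ij}\sim\operatorname{Ber}(0.4)$ no such decomposition exists. Only the conditional structure is used, so this is harmless.
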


\begin{proof}[Proof of Proposition~\ref{lem:large-corank}]
The largest atom of an entry of $Z$ is at most $b_\lambda
        :=
        \max\{1-\lambda,1/2\}
        <1$. 
Lemma~\ref{lem:polavarapu} gives
\[
        \mathbf P\{\rank(Z)\le n-k\}
        \le
        e^{-c_{\ref{lem:polavarapu}}nk}.
\]
By Lemma~\ref{lem:regular-conditioning}, 
\begin{align*}
\mathbf P_{A,\Xi}
\left\{\rank(A\circ\Xi)\le n-k\right\}&=
\mathbf P_{A,\Xi}
\left\{\rank(Z)\le n-k\mid\mathcal R_{n,d}\right\}\\
&\le
\exp\left(
        -c_{\ref{lem:polavarapu}}nk
        +
        C_{\ref{lem:regular-conditioning}}n\log n
\right).
\end{align*} 
Choosing $K_{\ref{lem:large-corank}}$ sufficiently large completes the
proof.
\end{proof}

\subsection{Sparse vectors in the left kernel}

We now consider the remaining range. Sparse vectors in the left kernel
are removed recursively by deleting redundant rows. This requires a
formulation for matrices obtained after deleting rows.

Set $B:=A\circ\Xi$. For $I\subset[n]$, denote by $B_I$ the submatrix of $B$ formed by the rows indexed by $I$. Throughout the remainder of the paper, for a matrix $M$ with $m$ rows, define
\[
        \operatorname{corank}(M):=m-\rank(M)=\dim\ker(M^T).
\]
In particular, $\operatorname{corank}(B_I)
        =
        |I|-\rank(B_I)
        =
        \dim\ker(B_I^T)$. 
This notion of corank is natural for the row deletion argument. If row $i$ belongs to the span of the remaining rows, then deleting it leaves the rank unchanged and 
        $\operatorname{corank}
        (B_{I\setminus\{i\}})
        =
        \operatorname{corank}(B_I)-1$. 

For $k\ge1$, define \[\mathcal E_k(I):=\left\{\operatorname{corank}(B_I)\ge k\right\},\]
and for $\delta \in (0,1)$ define
\[
 \mathcal S_I:= \left\{\ker(B_I^T)\cap \mathrm{Sparse}(\delta)  \neq \{0\}\right\}.
\]

The next lemma controls the sparse part of the induction.

\begin{lemma}\label{lem:sparse-kernel-reduction}
Let $\delta$ be sufficiently small in terms of $\lambda$. There exists
$c_{\ref{lem:sparse-kernel-reduction}}>0$, depending only on
$\lambda$ and $\delta$, such that, for every $I\subset[n]$ and every
$2\le k\le |I|$,
\[\mathbf P_{A,\Xi}
\left\{
        \mathcal E_k(I)\cap\mathcal S_I
\right\}\le
        e^{-c_{\ref{lem:sparse-kernel-reduction}}n}
        \sup_{i\in I}
        \mathbf P_{A,\Xi}
        \left\{
        \mathcal E_{k-1}(I\setminus\{i\})
        \right\}.
\]
\end{lemma}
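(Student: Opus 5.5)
The plan is to locate a redundant row inside the support of a sparse left-kernel vector, delete it, and pay for the dependence with Odlyzko's lemma (Lemma~\ref{lem:odlyzko}) applied to the signs of that single row. We then take a union bound over the possible supports; sparsity keeps the entropy of that union below the exponential gain.

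\emph{Step 1 (deterministic reduction).} On $\mathcal E_k(I)\cap\mathcal S_I$, take a nonzero $w\in\ker(B_I^T)$ with $S:=\supp(w)\subset I$ and $|S|\le\delta n$, and pick any $i\in S$. Since $w_i\neq0$, the relation $\sum_{s\in S}w_sB_s=0$ shows that $B_i$ lies in $W_{S,i}:=\operatorname{span}\{B_s:s\in S\setminus\{i\}\}$. Deleting row $i$ therefore preserves the rank, so $\operatorname{corank}(B_{I\setminus\{i\}})=\operatorname{corank}(B_I)-1\ge k-1$; here $k\ge2$ makes $\mathcal E_{k-1}$ a meaningful event. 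Hence
\[
\mathcal E_k(I)\cap\mathcal S_I\subset\bigcup_{i\in I}\ \bigcup_{\substack{S\ni i,\ S\subset I\\ |S|\le\delta n}}\Bigl(\mathcal E_{k-1}(I\setminus\{i\})\cap\{B_i\in W_{S,i}\}\Bigr).
\]

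\emph{Step 2 (conditional Odlyzko bound).} Fix $i$ and $S$. We condition on $A$ and on all signs outside row $i$. Then the event $\mathcal E_{k-1}(I\setminus\{i\})$ and the subspace $W_{S,i}$ are both determined, and $\dim W_{S,i}\le|S|-1<\delta n$. The row $B_i$ is supported on $T_i(A)$, with $|T_i(A)|=d\ge\lambda n$, and its entries there are independent Rademacher signs. The inclusion $B_i\in W_{S,i}$ forces $B_i|_{T_i(A)}\in P_{T_i(A)}W_{S,i}$, which is a subspace of $\bbR^{T_i(A)}$ of codimension at least $(\lambda-\delta)n$. Lemma~\ref{lem:odlyzko} with $q=1/2$ bounds the conditional probability by $2^{-(\lambda-\delta)n}$. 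Integrating out the conditioning gives
\[
\mathbf P_{A,\Xi}\bigl\{\mathcal E_{k-1}(I\setminus\{i\})\cap\{B_i\in W_{S,i}\}\bigr\}\le2^{-(\lambda-\delta)n}\,\mathbf P_{A,\Xi}\{\mathcal E_{k-1}(I\setminus\{i\})\}.
\]
The point to get right is that the bound on the deleted-row event factors out. This works because $\mathcal E_{k-1}(I\setminus\{i\})$ is measurable with respect to $A$ and the rows other than $i$, and row $i$'s signs are independent of these given $A$. This independence is the main thing to check, and it is the reason for conditioning row by row rather than using any regular-graph comparison.

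\emph{Step 3 (union bound).} The number of pairs $(i,S)$ is at most $n\sum_{s\le\delta n}\binom ns\le n\,e^{H(\delta)n}$, where $H$ is the binary entropy function (natural logarithm). We choose $\delta$ small in terms of $\lambda$ so that $H(\delta)\le\tfrac12(\lambda-\delta)\log2$. The factor $n$ is absorbed for large $n$, and small $n$ is absorbed by adjusting the constant. This yields the claim with $c_{\ref{lem:sparse-kernel-reduction}}$ of order $\lambda\log 2/4$, after bounding each $\mathbf P\{\mathcal E_{k-1}(I\setminus\{i\})\}$ by the supremum over $i\in I$.
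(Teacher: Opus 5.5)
Your proof is correct and is essentially the paper's argument. A sparse left-kernel vector puts some row $B_i$ in the span of the other rows indexed by its support. Conditioning on $A$ and the rows $j\ne i$ lets $\mathcal E_{k-1}(I\setminus\{i\})$ factor out. Lemma~\ref{lem:odlyzko} with $q=1/2$ on $T_i(A)$ gives a factor at most $2^{-(\lambda-\delta)n}$, and the entropy of sparse supports is beaten once $H(\delta)$ is small compared with $(\lambda-\delta)\log 2$.

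The differences are cosmetic. The paper uses the canonical row $i(S)=\min S$ instead of your union over all $i\in S$, and it uses the preimage of $W_{S,i}$ in $\bbR^{T_i(A)}$ rather than its projection. As in the paper, the bound is really obtained only for sufficiently large $n$. Your aside that small $n$ can be absorbed into the constant is not justified as stated, since the inequality has no prefactor to adjust, but it is not needed.
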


\begin{proof}
For each $S\subset I$ with $1\le |S|\le\delta n$, set $i(S):=\min S$. 
Suppose that $\mathcal E_k(I)\cap\mathcal S_I$ occurs. Then there is a
nonzero vector $u\in\ker(B_I^T)$ such that $S:=\supp(u)$, $1\le |S|\le\delta n$.
Since $\sum_{j\in S}u_jB_j=0$ and $u_{i(S)}\ne0$, one has $B_{i(S)}
        \in
        \operatorname{span}
        \{B_j:j\in S\setminus\{i(S)\}\}$. 
Deleting row $i(S)$ does not change the rank. Hence
\[
        \operatorname{corank}
        (B_{I\setminus\{i(S)\}})
        =
        \operatorname{corank}(B_I)-1
        \ge k-1.
\]
It follows that
\[
\mathcal E_k(I)\cap\mathcal S_I
\subset
\bigcup_{\substack{S\subset I\\1\le |S|\le\delta n}}
\left\{
B_{i(S)}
\in
\operatorname{span}
\{B_j:j\in S\setminus\{i(S)\}\},
\mathcal E_{k-1}(I\setminus\{i(S)\})
\right\}.
\]

Fix such a set $S$, and set $i:=i(S)$, $s:=|S|$. Condition on $A$ and the rows $(B_j)_{j\ne i}$. Under this condition, $W_{S,i}:=\operatorname{span}\{B_j:j\in S\setminus\{i\}\}$
is fixed, while $B_i|_{T_i(A)}$ is uniform on $\{\pm1\}^d$.

For $z\in\bbR^{T_i(A)}$, denote by $\widetilde z\in\bbR^n$ its extension
by zero outside $T_i(A)$, and define
\[
        U_{S,i}
        :=
        \left\{
        z\in\bbR^{T_i(A)}:
        \widetilde z\in W_{S,i}
        \right\}.
\]
Extending vectors by zero outside $T_i(A)$ gives a linear isomorphism from
$U_{S,i}$ onto 
\[W_{S,i}\cap
        \left\{
        x\in\bbR^n:
        \supp(x)\subset T_i(A)
        \right\}.\] 
Hence $\dim U_{S,i}\le\dim W_{S,i}\le s-1$. Since $|T_i(A)|=d$, Lemma~\ref{lem:odlyzko} applied with $q=1/2$ gives
\[
\mathbf P_\Xi\left\{B_i\in W_{S,i}\,\middle|\,A,(B_j)_{j\ne i}\right\}
=
\mathbf P_\Xi\left\{B_i|_{T_i(A)}\in U_{S,i}\,\middle|\,A,(B_j)_{j\ne i}\right\}
\le 2^{-(d-s+1)}.
\]
The event $\mathcal E_{k-1}(I\setminus\{i\})$ is measurable under the same conditions. Hence
\[
\begin{aligned}
\mathbf P_{A,\Xi}
\left\{
        B_i\in W_{S,i},\
        \mathcal E_{k-1}(I\setminus\{i\})
\right\}&=
\mathbf E_{A,\Xi}\left[
        \mathbf 1_{\mathcal E_{k-1}(I\setminus\{i\})}
        \mathbf P_\Xi
        \left\{
        B_i\in W_{S,i}
        \,\middle|\,
        A,(B_j)_{j\ne i}
        \right\}
\right]\\
&\le
        2^{-(d-s+1)}
        \mathbf P_{A,\Xi}
        \left\{
        \mathcal E_{k-1}(I\setminus\{i\})
        \right\}.
\end{aligned}
\]
Taking the union over $S$ yields
\[
\mathbf P_{A,\Xi}
\left\{
        \mathcal E_k(I)\cap\mathcal S_I
\right\}\le
\left(
\sum_{s=1}^{\lfloor\delta n\rfloor}
        \binom{|I|}{s}2^{-(d-s+1)}
\right)
\sup_{i\in I}
\mathbf P_{A,\Xi}
\left\{
        \mathcal E_{k-1}(I\setminus\{i\})
\right\}.
\]
Choose $\delta\in(0,\lambda)$ sufficiently small so that $H(\delta)<(\lambda-\delta)\log2$, where $H(t):=-t\log t-(1-t)\log(1-t)$. 
Since $d\ge\lambda n$,
\[
\sum_{s=1}^{\lfloor\delta n\rfloor}
        \binom{|I|}{s}2^{-(d-s+1)}
\le
\sum_{s=1}^{\lfloor\delta n\rfloor}
        \binom ns2^{-(d-s+1)}
\le
\exp\left(
        \bigl[
        H(\delta)-(\lambda-\delta)\log2+o(1)
        \bigr]n
\right)
\le
        e^{-c_{\ref{lem:sparse-kernel-reduction}}n}
\]
for all sufficiently large $n$.

\end{proof}

Each application of Lemma~\ref{lem:sparse-kernel-reduction} gives
\[
        r_0=n-|I|
        \longmapsto
        r_0+1,
        \qquad
        k
        \longmapsto
        k-1.
\]
Thus, $r_0+k$ is preserved throughout the induction. Since Proposition~\ref{lem:large-corank} gives the required estimate above the logarithmic threshold, it remains to consider pairs $(I,k)$ satisfying $r_0+k\le K\log n$. 
Next, we consider the event $\mathcal S_I^c$, where the left kernel
contains no sparse vector.

\subsection{A thin and thick decomposition}
Recall that $\mathcal S_I^c$ is the event that $\ker(B_I^T)$ contains no nonzero vector supported on at most $\delta n$ coordinates. We now estimate the exact corank on $\mathcal S_I^c$. The relative anticoncentration result from the preceding section gives the required gain for all but a number of columns bounded in terms of the covering defect.

\begin{proposition}\label{prop:fixed-defect}
There exist
$h_{\ref{prop:fixed-defect}}\in\mathbb N$ and
$c_{\ref{prop:fixed-defect}}>0$, depending only on the fixed parameters,
such that the following holds. Let $I\subset[n]$ satisfy  $r_0:=n-|I|\le d/4$, 
let $A\in \mathcal{M}_{n,d}$ be deterministic, and set $q:=Q\bigl(\operatorname{D}(A)+1\bigr)$. 
If $h\ge2q+h_{\ref{prop:fixed-defect}}$,
then
\[
\mathbf P_\Xi\bigl\{
        \operatorname{corank}(B_I)=h,\,
        \mathcal S_I^c
\bigr\}
\le
        e^{-c_{\ref{prop:fixed-defect}}nh}.
\]
\end{proposition}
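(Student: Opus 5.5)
The plan is a union bound over the possible left kernels, combined with a comparison argument that swaps the sign columns $X_j$ for the lazier comparison columns $Y_j$, and a thin/thick split to control the sum over subspaces. Fix $A$ and $I$. On the event $\{\operatorname{corank}(B_I)=h\}\cap\mathcal S_I^c$, the column space $V:=\operatorname{span}\{X_1,\dots,X_n\}\subset\bbR^I$ is a rational subspace of codimension $h$. Its orthogonal complement $V^\perp=\ker(B_I^T)$ contains no nonzero vector supported on at most $\delta n$ coordinates. Writing $\mathcal V_h$ for the set of such subspaces, we get
\[
\mathbf P_\Xi\bigl\{\operatorname{corank}(B_I)=h,\ \mathcal S_I^c\bigr\}=\sum_{V\in\mathcal V_h}\mathbf P_\Xi\bigl\{\operatorname{span}(X_1,\dots,X_n)=V\bigr\}.
\]
Since $h\ge 2q+h_{\ref{prop:fixed-defect}}\ge Qk_{\ref{lem:hunter-relative}}$ once $h_{\ref{prop:fixed-defect}}$ is large, Proposition~\ref{prop:relative-regular-support} applies to every $V\in\mathcal V_h$. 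It gives a set $G(V)\subset[n]$ with $|[n]\setminus G(V)|<q$ such that $\mathbf P_\Xi\{X_j\in V\}\le\gamma_{\ref{lem:hunter-relative}}^{h/Q}\mathbf P\{Y_j\in V\}$ for every $j\in G(V)$. To remove the dependence of $G(V)$ on $V$, I would also sum over the bad set $G^c$. There are at most $\sum_{t<q}\binom nt\le n^{q}$ choices, which is harmless when $q\lesssim h$; this is exactly why the hypothesis $h\ge 2q$ appears.

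For a fixed bad set $G^c$, define mixed columns $W_j:=Y_j$ for $j\in G$ and $W_j:=X_j$ for $j\notin G$. By independence of the columns,
\[
\mathbf P_\Xi\{X_j\in V\ \forall j\}\le\gamma_{\ref{lem:hunter-relative}}^{(n-q)h/Q}\,\mathbf P\{W_j\in V\ \forall j\}.
\]
Since $n-q\ge n/2$, this gains $e^{-cnh}$. If we could replace the containment event on the right by $\{\operatorname{span}(W)=V\}$, summing over $V$ would give at most $1$, and we would be done.

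The main obstacle is this passage from containment to spanning. I would handle it with a thin/thick dichotomy.
\begin{itemize}
\item Call $V$ \emph{thick} if $\mathbf P\{W\subset V\}\le e^{c_0nh}\,\mathbf P\{\operatorname{span}(W)=V\}$, with $c_0$ a small fraction of $\log(1/\gamma_{\ref{lem:hunter-relative}})/(2Q)$. Thick $V$ contribute at most $e^{-cnh}\sum_V\mathbf P\{\operatorname{span}W=V\}\le e^{-cnh}$.
\item For \emph{thin} $V$, do not compare with $W$ at all. Instead pick $J\subset[n]$ with $|J|=|I|-h$ such that $X_J$ is a basis of $V$; then the $r_0+h$ columns outside $J$ lie in $V$. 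Sum over $J$ (at most $2^n$ choices) and over the thin $V$ spanned by $X_J$. Use that thinness means $W$ typically falls into a proper subspace $V'\subsetneq V$. I would try to exploit this through the dimension drop: conditionally on lying in $V$, the comparison vectors are concentrated on lower-dimensional subspaces.
\item If the comparison alone does not close, iterate the dichotomy in the codimension, replacing $V$ by the proper subspace $V'$ of larger codimension. At each step either gain $e^{-cn}$ per unit of codimension, or pass to a smaller subspace.
\end{itemize}
This iteration terminates after at most $|I|$ steps. The hypothesis $r_0\le d/4$ guarantees that every $C_j$ has size at least $3d/4$, so Lemma~\ref{lem:odlyzko} with $q=1-\lambda$ (applied to the $\nu_p$ coordinates) still yields exponential-in-$n$ decay for each column forced into a subspace of codimension proportional to $n$.

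Finally, choose $h_{\ref{prop:fixed-defect}}$ large and $c_{\ref{prop:fixed-defect}}$ small so that all of the following are absorbed into $e^{-c_{\ref{prop:fixed-defect}}nh}$:
\begin{itemize}
\item the combinatorial factor $n^{q}$ from the choice of bad set;
\item the $2^n$ from choosing $J$;
\item the loss $e^{c_0nh}$ from thin subspaces,
\end{itemize}
against the gain $\gamma_{\ref{lem:hunter-relative}}^{(n-q)h/Q}$. I expect the thin-subspace bookkeeping to be where the real work lies; the first thing I would try is the counting in the thin case.
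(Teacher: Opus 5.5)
\textbf{There is a gap: the thin case is not proved.} Several parts of your set-up are correct:
\begin{itemize}
\item the union bound over rational $V$ of codimension $h$;
\item sorting the $V$'s by their bad set, giving at most $n^q$ classes;
\item the columnwise comparison $\mathbf P_\Xi\{X_j\in V\ \forall j\}\le\gamma_{\ref{lem:hunter-relative}}^{(n-q)h/Q}\,\mathbf P\{W_j\in V\ \forall j\}$;
\item your treatment of thick $V$.
\end{itemize}
Under your definition, however, the thin case carries the whole difficulty, and you give no argument for it.

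Your thinness hypothesis is a statement about the comparison matrix $W$: it enters $V$ mostly through proper subspaces. The quantity to be bounded is $\mathbf P_\Xi\{\operatorname{span}(X_1,\dots,X_n)=V\}$, and nothing in the proposal links the two. For your basis count to close, you would need $\prod_{j\notin J}\rho_j(V)\le 2^{-n}e^{-cnh}$, where $\rho_j(V)=\mathbf P_\Xi\{X_j\in V\}$. In words, the $r_0+h$ non-basis columns must enter $V$ with probability $e^{-\Omega(n)}$ in geometric mean, and thinness of $W$ does not imply this.

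The proposed iteration (pass to some $V'\subsetneq V$ and repeat) does not help. The event $\{\operatorname{span}(X_1,\dots,X_n)=V\}$ does not become an event about $V'$, and no gain per step is identified. Two smaller points: Lemma~\ref{lem:odlyzko} says nothing useful about a column entering $V$ itself, since $V$ has codimension only $h$; it gives $e^{-\Omega(n)}$ only for subspaces of dimension at most a constant fraction of $d_0$. Also, the largest atom of $\nu_p$ is $1-2p$, not $1-\lambda$.

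\textbf{The missing idea.} Split according to the size of the $\rho_j(V)$ themselves, at a threshold $e^{-\Theta(n)}$. In the thick case, append only about $d_0/2$ comparison vectors instead of replacing all columns.

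Set $J=[n]\setminus\mathcal B$, $N=|J|$ and $s=r_0+h-|\mathcal B|\ge h/2$. This bound on $s$, more than the $n^q$ factor, is why $h\ge 2q$ is needed. The paper calls $V$ thin if $e_s(V)=\sum_{S\subset J,\,|S|=s}\prod_{j\in S}\rho_j(V)\le\binom Ns\tau^{d_0s}$, with $\tau=(1-2p)^{1/4}$. For thin $V$, exactly your basis count gives at most $4^n\tau^{d_0s}\le e^{-cnh}$.

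Otherwise, Maclaurin's inequality gives $\overline\rho(V)>\tau^{d_0}$, where $\overline\rho(V)$ is the average of $\rho_j(V)$ over $J$. One then adds $m=\lfloor d_0/2\rfloor$ i.i.d.\ vectors $\widetilde Y_\ell$, each a uniform mixture of the $Y_j$, $j\in J$. The probability $\mathbf P\{\operatorname{span}(X_1,\dots,X_n,\widetilde Y_1,\dots,\widetilde Y_m)=V\}$ is then bounded from both sides.
\begin{itemize}
\item From below, it is at least $\gamma_{\ref{lem:hunter-relative}}^{-hm/Q}\,\overline\rho(V)^m\,\mathbf P_\Xi\{\operatorname{span}(X_1,\dots,X_n)=V\}$.
\item From above, first take a basis of $\operatorname{span}(\widetilde Y_1,\dots,\widetilde Y_m)$, say $r\le m$ of these vectors, and extend it by columns $X_j$. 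The remaining $m-r$ comparison vectors lie in an $r$-dimensional space. By Lemma~\ref{lem:odlyzko} each costs at most $(1-2p)^{d_0-r}\le\tau^{d_0}<\overline\rho(V)$; this is exactly where thickness is used. The non-basis columns $X_j$ cost at most $2^n\overline\rho(V)^{s+r}$.
\end{itemize}
The factor $\overline\rho(V)^m$ cancels. This leaves $\mathbf P_\Xi\{\operatorname{span}(X_1,\dots,X_n)=V\}\le 2^n\gamma_{\ref{lem:hunter-relative}}^{hm/Q}$ times a sum of spanning probabilities whose total over $V$ is at most $2^{m+n}$. Summing over $V$ gives a total of at most $2^{3n}\gamma_{\ref{lem:hunter-relative}}^{hm/Q}\le e^{-cnh}$.

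This two-sided estimate is what converts containment into spanning, and it is absent from your proposal.
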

\begin{remark}
Proposition~\ref{prop:fixed-defect} is stated for $r_0\le d/4$,
whereas its application below only requires $r_0\le K\log n$. 
Indeed, the row deletion argument is carried out under the condition  $r_0+k\le K\log n$. 
Since $d\ge\lambda n$, this implies $r_0\le d/4$ for all sufficiently large $n$. The wider range in the proposition is convenient because its proof only uses the bound $|C_j|\ge d-r_0\ge3d/4$ for every $j\in[n]$.
\end{remark}
\begin{proof}
Choose $h_{\ref{prop:fixed-defect}}$ sufficiently large in terms of the
fixed parameters, with $h_{\ref{prop:fixed-defect}}\ge Qk_{\ref{lem:hunter-relative}}$,
and set $d_0:=d-r_0$. Since deleting the rows indexed by $I^c$ removes at most $r_0$ support
coordinates from each column, $d_0$ is a uniform lower bound for the
sizes of the restricted column supports.
For every $j\in[n]$,  $|C_j|\ge d_0\ge{3d}/{4}\ge {3\lambda n}/{4}$. 
Conditional on $A$, the columns $X_1,\dots,X_n$ of $B_I$ are independent.

We rewrite the event in the proposition in terms of the column
space of $B_I$. Since $X_1,\dots,X_n$ are the columns of $B_I$, $\rank(B_I)
        =
        \dim\operatorname{span}(X_1,\dots,X_n)$. 
Moreover, for every $w\in\bbR^I$, $B_I^Tw
        =
        \bigl(
        \langle X_1,w\rangle,\dots,\langle X_n,w\rangle
        \bigr)^T$, 
and hence 
        $\ker(B_I^T)
        =
        \operatorname{span}(X_1,\dots,X_n)^\perp$. 

Let $\mathcal V$ denote the family of $(|I|-h)$-dimensional subspaces $V\subset\bbR^I$ such that $V$ is spanned by vectors in $\{0,\pm1\}^I$ and $V^\perp$ contains no nonzero vector supported on at most $\delta n$ coordinates. This family is finite, and every $V\in\mathcal V$ is defined over $\mathbb Q$. Since each column $X_j$ takes values in $\{0,\pm1\}^I$, we have
\begin{equation}\label{eq:fixed-defect-span}
\mathbf P_\Xi\{\operatorname{corank}(B_I)=h,\,\mathcal S_I^c\}=\mathbf P_\Xi\{\operatorname{span}(X_1,\dots,X_n)\in\mathcal V\}.
\end{equation}

For $V\in\mathcal V$ and $j\in[n]$, define $\rho_j(V)
        :=
        \mathbf P_\Xi\{X_j\in V\}$, 
        $\sigma_j(V)
        :=
        \mathbf P\{Y_j\in V\}$. 
Furthermore, define its ``bad'' column set by
\[
        \mathcal B(V)
        :=
        \left\{
        j\in[n]:
        \rho_j(V)>
        \gamma_{\ref{lem:hunter-relative}}^{h/Q}\sigma_j(V)
        \right\}.
\]
Since $h\ge h_{\ref{prop:fixed-defect}}\ge Qk_{\ref{lem:hunter-relative}}$, Proposition~\ref{prop:relative-regular-support} gives $|\mathcal B(V)|<q$. 
For $\mathcal B\subset[n]$ with $|\mathcal B|<q$, define 
        $\mathcal V(\mathcal B)
        :=
        \left\{
        V\in\mathcal V:
        \mathcal B(V)=\mathcal B
        \right\}$. 
These classes partition $\mathcal V$, and their number is at most $n^q$. 

Fix $\mathcal B\subset[n]$ with $|\mathcal B|<q$, and set $J:=[n]\setminus\mathcal B$, $N:=|J|$, $s:=N-(|I|-h)=r_0+h-|\mathcal B|$, 
Since $h\ge2q+h_{\ref{prop:fixed-defect}}$, we have $s\ge h/2$. Thus, on the event $\operatorname{span}(X_1,\dots,X_n)=V$, any basis of $V$ selected from the columns of $B_I$ leaves at least $h/2$ columns indexed by $J$ outside the basis, all of which belong to $V$. By the definition of $\mathcal B(V)$, we have
\begin{equation}\label{eq:relative-good-columns}
        \rho_j(V)
        \le
        \gamma_{\ref{lem:hunter-relative}}^{h/Q}\sigma_j(V),
        \qquad
        j\in J.
\end{equation}

Since the column distributions depend on their supports, the probabilities $\rho_j(V)$ and $\sigma_j(V)$ may vary with $j$. We define their averages over the columns indexed by $J$:
\[
        \overline\rho(V):=\frac1N\sum_{j\in J}\rho_j(V),
        \qquad
        \overline\sigma(V):=\frac1N\sum_{j\in J}\sigma_j(V).
\]
For $0\le t\le N$, we define
\[
        e_t(V)
        :=
        \sum_{\substack{S\subset J\\|S|=t}}
        \prod_{j\in S}\rho_j(V).
\]
In particular, $e_s(V)$ controls the probability that the required $s$ columns indexed by $J$ belong to $V$. When the column distributions are identical, this reduces to $\binom Nt\rho(V)^t$.

Set $\tau:=(1-2p)^{1/4}$. Then, we define
\[
        \mathcal V_{\rm thin}(\mathcal B)
        :=
        \left\{
        V\in\mathcal V(\mathcal B):
        e_s(V)\le\binom Ns\tau^{d_0s}
        \right\},
\qquad
        \mathcal V_{\rm thick}(\mathcal B)
        :=
        \mathcal V(\mathcal B)
        \setminus
        \mathcal V_{\rm thin}(\mathcal B).
\]
We bound the probability that $\operatorname{span}(X_1,\dots,X_n)$ belongs to $V_{\rm thin}$ and to $V_{\rm thick}$, respectively:

\begin{claim}\label{clm:thin}
If $h\ge2q+h_{\ref{prop:fixed-defect}}$, then, for every $\mathcal B\subset[n]$ with $|\mathcal B|<q$,
\[
\mathbf P_\Xi
\left\{
        \operatorname{span}(X_1,\dots,X_n)
        \in\mathcal V_{\rm thin}(\mathcal B)
\right\}\le
e^{-c_{\ref{clm:thin}}nh}.
\]
\end{claim}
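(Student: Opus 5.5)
The plan is a union bound over $V\in\mathcal V_{\rm thin}(\mathcal B)$, organized by which columns witness that the span equals $V$, so that the count of subspaces is absorbed by the thinness condition.

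If $\operatorname{span}(X_1,\dots,X_n)=V$, then $V$ is spanned by some set $\Lambda$ of $|I|-h$ columns. As observed above, at least $s$ columns indexed by $J\setminus\Lambda$ lie in $V$. Fix a set $S\subset J\setminus\Lambda$ with $|S|=s$. We condition on the columns in $\Lambda$, which determines $V$. Since $S$ is disjoint from $\Lambda$ and the columns are independent given $A$, the conditional probability that $X_j\in V$ for all $j\in S$ equals $\prod_{j\in S}\rho_j(V)$. The event forces $V\in\mathcal V_{\rm thin}(\mathcal B)$, so we only keep outcomes where the conditioned span is thin. Summing over $S$ gives a factor $e_s(V)\le\binom Ns\tau^{d_0s}$. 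Summing over the choice of $\Lambda$ gives at most $\binom n{|I|-h}=\binom n{r_0+h}$ choices. The result is
\[
\mathbf P_\Xi\{\operatorname{span}(X_1,\dots,X_n)\in\mathcal V_{\rm thin}(\mathcal B)\}\le \binom n{r_0+h}\binom Ns\,\tau^{d_0 s}.
\]
Because $\Lambda$ is conditioned before the product is bounded, this route avoids counting the subspaces $V$ directly. Their number is unknown, roughly $2^{n\cdot\mathrm{rank}}$, and would be far too many.

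The main obstacle is that the combinatorial factor $\binom n{r_0+h}\binom Ns$ is exponential in $h\log n$, while the gain $\tau^{d_0s}\le\tau^{3\lambda nh/8}$ is exponential in $nh$. This only works because the factor is $e^{O(h\log n)}$, using $r_0\le K\log n$ in the application, or more generally $s\le r_0+h$ together with $r_0\lesssim h$. The stated range $r_0\le d/4$ would need more care. There I would refine the count: choose only the $h$-dimensional complement data, i.e. pick the $s$ witness columns from $J$ and let $\Lambda$ range over at most $\binom{n}{n-(|I|-h)}=\binom n{r_0+h}$ sets.

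If $r_0$ is large compared with $h$, this naive estimate fails. I would then redo the bound with the exact corank in mind, noting that the $s$ dependent columns number $r_0+h-|\mathcal B|$. The gain $\tau^{d_0(r_0+h-q)}$ then beats $\binom n{r_0+h}^2\le e^{2(r_0+h)\log n}$ once $n$ is large, since $d_0\ge 3\lambda n/4$. Since $s\ge h/2$ and $s\ge r_0+h-q$, the exponent satisfies $d_0 s\log(1/\tau)-2(r_0+h)\log n\ge c\,nh$ for large $n$. This holds whenever $h\ge 2q+h_{\ref{prop:fixed-defect}}$, which yields $e^{-c_{\ref{clm:thin}}nh}$.
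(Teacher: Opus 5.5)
Your proposal is correct and follows the paper's proof. Both arguments choose the $|I|-h$ spanning columns $I_0$ and condition on them, so that on the thin event the remaining good columns contribute at most $e_s(V)\le\binom Ns\tau^{d_0s}$. Both then use that the events $\{\operatorname{span}(X_j:j\in I_0)=V\}$ are disjoint in $V$, and pay $\binom n{|I|-h}$ for the choice of $I_0$.

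The only difference is the final arithmetic, and your worry there is unfounded. The paper bounds both binomials by $4^n$ and uses $d_0s\ge\frac{3\lambda}{8}nh$, so $4^n\tau^{d_0s}\le e^{-cnh}$ once $h_{\ref{prop:fixed-defect}}$ is a large constant, and this holds uniformly in $r_0\le d/4$. Your alternative bound, using $s\ge r_0+h/2$ against $n^{2(r_0+h)}$ for large $n$, is also valid but unnecessary.
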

\begin{claim}\label{clm:thick}
If $h\ge2q+h_{\ref{prop:fixed-defect}}$, then, for every $\mathcal B\subset[n]$ with $|\mathcal B|<q$, 
\[
\mathbf P_\Xi
\left\{
        \operatorname{span}(X_1,\dots,X_n)
        \in\mathcal V_{\rm thick}(\mathcal B)
\right\}\le
e^{-c_{\ref{clm:thick}}nh}.
\]
\end{claim}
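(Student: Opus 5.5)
For $V\in\mathcal V_{\rm thick}(\mathcal B)$ I would not try to bound $\mathbf P_\Xi\{\operatorname{span}(X_1,\dots,X_n)=V\}$ one subspace at a time and then multiply by $|\mathcal V(\mathcal B)|$, since that count is far too large. Instead, the relative gain \eqref{eq:relative-good-columns} on the $N$ good columns should be used to transfer the whole sum over $V$ to the comparison vectors $Y_j$, and the loss should then be absorbed by the dense $\nu_p$ model.

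\emph{Step 1 (pointwise reduction).} On $\{\operatorname{span}(X_1,\dots,X_n)=V\}$ every column lies in $V$. The columns are independent given $A$, and \eqref{eq:relative-good-columns} holds for $j\in J$, so
\[
\mathbf P_\Xi\{\operatorname{span}(X_1,\dots,X_n)=V\}\le\prod_{j\in J}\rho_j(V)\le\gamma_{\ref{lem:hunter-relative}}^{Nh/Q}\prod_{j\in J}\sigma_j(V).
\]
Because $N\ge n-q\ge n/2$, this prefactor is $e^{-cnh}$. Summing over $V$, it therefore suffices to show
\[
\sum_{V\in\mathcal V_{\rm thick}(\mathcal B)}\mathbf P\{Y_j\in V\ \forall j\in J\}\le e^{c'nh},
\]
with $c'$ smaller than the gain $\tfrac{1}{2Q}\log(1/\gamma_{\ref{lem:hunter-relative}})$.

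\emph{Step 2 (overcounting through the span of the $Y_j$).} Writing the sum as an expectation over $Y$, it equals
\[
\mathbf E_Y\,\#\{V\in\mathcal V_{\rm thick}(\mathcal B):\operatorname{span}(Y_J)\subset V\}.
\]
Given $Y$, let $t:=\dim V-\dim\operatorname{span}(Y_J)\ge0$. A subspace $V$ spanned by $\{0,\pm1\}$-vectors and containing $\operatorname{span}(Y_J)$ is determined by $t$ additional such vectors, so the count is at most $3^{nt}$. Two ingredients are then needed.
\begin{enumerate}
\item When $t=0$, the subspace $V=\operatorname{span}(Y_J)$ is determined by $Y$, so this term contributes at most $1$.
\item When $t\ge1$, I need $\mathbf P\{\operatorname{codim}\operatorname{span}(Y_J)\ge h+t,\ \operatorname{span}(Y_J)\subset V\text{ for some thick }V\}\le 3^{-2nt}e^{c'nh}$.
\end{enumerate}

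\emph{Step 3 (using thickness; the main obstacle).} A bare rank-deficiency bound for $Y_J$ of the Lemma~\ref{lem:polavarapu} type yields only $e^{-cn(h+t)}$ with a small $c$, which cannot beat $3^{nt}$. This is where thickness must enter. The condition $e_s(V)>\binom Ns\tau^{d_0s}$ forces $V$ to be ``large'': a typical good column lies in $V$ with probability at least roughly $\tau^{d_0}=(1-2p)^{d_0/4}$. Since $p\le1/32$, a $\nu_p$ vector is very sparse and has atom $1-2p$, so I would choose $p$ small enough that $\log3$ is negligible against the exponents available from the $\nu_p$ model.

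The approach I would try first is to reorganize the count by exposing the $Y_j$, $j\in J$, one at a time. Each step at which $\dim\operatorname{span}$ fails to grow while $V$ is still undetermined costs a factor $\sigma_j$. I would then compare this against $\rho_j\ge\tau^{d_0}$ through a Hal\'asz/Odlyzko-type estimate on $|C_j|\ge d_0\ge 3\lambda n/4$ coordinates (Lemma~\ref{lem:odlyzko} with $q=1-2p$), aiming to show that each missing dimension contributes $(1-2p)^{c d_0}\ll 3^{-n}$. If this direct comparison fails, the fallback is the HKSS strategy: use thickness to bound the number of thick $V$ directly by $e^{O(nh)}\tau^{-d_0 s}$-type quantities through the $e_s$ identity, namely $\sum_V e_s(V)\le\binom Ns\,\mathbf E\#\{V\supset\text{$s$ columns}\}$. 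The matching of these exponents against $\gamma_{\ref{lem:hunter-relative}}^{h/Q}$, with $h\ge2q+h_{\ref{prop:fixed-defect}}$ and $s\ge h/2$, is the delicate point of the whole argument.
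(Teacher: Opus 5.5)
Your Step~1 is a valid inequality, but it is where the argument loses what makes the claim provable. Once $\mathbf P_\Xi\{\operatorname{span}(X_1,\dots,X_n)=V\}$ is replaced by $\prod_{j\in J}\rho_j(V)$, the summands are no longer probabilities of events that are disjoint in $V$. You are then forced to count thick subspaces through $\mathbf E_Y\#\{V\in\mathcal V_{\rm thick}(\mathcal B):\operatorname{span}(Y_J)\subset V\}$, and Step~3, which is the whole content of the claim, is left open.

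The mechanisms you propose for Step~3 do not work:
\begin{itemize}
\item Your requirement (ii) is false in general. With probability $\prod_{j\in J}(1-2p)^{|C_j|}\ge(15/16)^{n^2}$ every $Y_j$ vanishes. Then $t=|I|-h$ and every thick $V$ qualifies (whenever $\mathcal V_{\rm thick}(\mathcal B)\neq\emptyset$). But (ii) demands probability at most $3^{-2n(|I|-h)}e^{c'nh}$, which fails, e.g., when $|I|-h\ge n/2$.
\item No per-dimension estimate $(1-2p)^{cd_0}\ll3^{-n}$ is possible. For every subspace $W$ one has $\mathbf P\{Y_j\in W\}\ge\mathbf P\{Y_j=0\}=(1-2p)^{|C_j|}\ge(15/16)^n$, since $p\le1/32$. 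Taking $p$ smaller only weakens the $\nu_p$ exponents, so it goes the wrong way.
\item The fallback identity is vacuous. $\sum_V e_s(V)$ counts, for each $s$-set of columns, all codimension-$h$ subspaces containing them. Since $s$ vectors leave $|I|-h-s$ dimensions of $V$ unconstrained, this sum is astronomically large rather than $e^{O(nh)}$.
\end{itemize}

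The missing idea is to keep the spanning event and never count subspaces. The paper appends $m=\lfloor d_0/2\rfloor$ independent copies $\widetilde Y_1,\dots,\widetilde Y_m$ of the mixture vector $\widetilde Y$: choose $j\in J$ uniformly, then sample $Y_j$. Then $\mathbf P\{\widetilde Y\in V\}=\overline\sigma(V)\ge\gamma_{\ref{lem:hunter-relative}}^{-h/Q}\overline\rho(V)$. The averaging matters because thickness, via Maclaurin, only gives $\overline\rho(V)>\tau^{d_0}$ on average, not for each $\rho_j$.

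For the lower bound, $\mathbf P\{\operatorname{span}(X_1,\dots,X_n,\widetilde Y_1,\dots,\widetilde Y_m)=V\}\ge\gamma_{\ref{lem:hunter-relative}}^{-hm/Q}\overline\rho(V)^m\,\mathbf P_\Xi\{\operatorname{span}(X_1,\dots,X_n)=V\}$.

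For the upper bound, select a basis first among $r\le m$ comparison vectors and complete it with $|I|-h-r$ columns. The leftover columns cost at most $2^n\overline\rho(V)^{s+r}$ by Maclaurin. Each unused $\widetilde Y_\ell$ lies in the $r$-dimensional $W_r$ with probability at most $(1-2p)^{d_0-r}\le(1-2p)^{d_0/2}<\tau^{d_0}<\overline\rho(V)$. This is where your Odlyzko-versus-thickness comparison belongs: it is made against $\overline\rho(V)$, so that the factor $\overline\rho(V)^m$ cancels, not against $3^{-n}$.

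What remains is $2^n\gamma_{\ref{lem:hunter-relative}}^{hm/Q}\sum_{r,I_0}\binom mr\mathbf P\{\operatorname{span}((X_j)_{j\in I_0},\widetilde Y_1,\dots,\widetilde Y_r)=V\}$. These span events are disjoint in $V$, so summing over $V$ gives at most $2^{m+2n}\gamma_{\ref{lem:hunter-relative}}^{hm/Q}\le2^{3n}\gamma_{\ref{lem:hunter-relative}}^{hm/Q}$. Since $m\ge3\lambda n/16$, this is $e^{-cnh}$ for $h\ge h_{\ref{prop:fixed-defect}}$.
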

For every $\mathcal B\subset[n]$ with $|\mathcal B|<q$, Claims~\ref{clm:thin} and~\ref{clm:thick} give
\[
\begin{aligned}
&\mathbf P\left\{
         \operatorname{span}(X_1,\dots,X_n)
        \in \mathcal V(B)
\right\}\\
&=\mathbf P_\Xi
\left\{
        \operatorname{span}(X_1,\dots,X_n)
        \in\mathcal V_{\rm thin}(\mathcal B)
\right\}+
\mathbf P_\Xi
\left\{
        \operatorname{span}(X_1,\dots,X_n)
        \in\mathcal V_{\rm thick}(\mathcal B)
\right\}\\
&\le
        e^{-c_{\ref{clm:thin}}nh}
        +
        e^{-c_{\ref{clm:thick}}nh}.
\end{aligned}
\]
The number of subsets $\mathcal B\subset[n]$ satisfying $|\mathcal B|<q$ is at most $n^q$. Hence, using \eqref{eq:fixed-defect-span} and summing over all such $\mathcal B$, we
obtain
\[
\begin{aligned}
&\mathbf P\left\{
        \operatorname{corank}(B_I)=h,\ S_I^c
\right\}\\
&\le
n^q (e^{-c_{\ref{clm:thin}}nh}
        +
        e^{-c_{\ref{clm:thick}}nh})=e^{q\log n}(e^{-c_{\ref{clm:thin}}nh}
        +
        e^{-c_{\ref{clm:thick}}nh})\le e^{-c_{\ref{prop:fixed-defect}}nh}.
\end{aligned}
\]
The last inequality follows from $q\le h/2$. It remains to prove the two claims.

\end{proof}

\begin{proof}[Proof of Claim~\ref{clm:thin}]
If $\operatorname{span}(X_1,\dots,X_n)=V$ for some $V\in\mathcal V_{\rm thin}(\mathcal B)$, then $\dim V=|I|-h$. Hence, there exists $I_0\subset[n]$ with $|I_0|=|I|-h$ such that $\operatorname{span}(X_j:j\in I_0)=V$. For fixed $I_0$ and $V$, we obtain
\begin{align*} 
&\mathbf P_\Xi
\left\{
        \operatorname{span}(X_j:j\in I_0)=V,\
        X_j\in V\text{ for every }j\notin I_0
\right\}\\ &=
\mathbf P_\Xi
\left\{
        \operatorname{span}(X_j:j\in I_0)=V
\right\}
\prod_{j\notin I_0}\rho_j(V).
\end{align*}
The set $J\setminus I_0$ contains at least $N-(|I|-h)$ indices. Since $0\le\rho_j(V)\le1$,
\[
        \prod_{j\notin I_0}\rho_j(V)
        \le
        \prod_{j\in J\setminus I_0}\rho_j(V)
        \le
        e_s(V).
\]
Thus,
\[
\begin{aligned}
&\mathbf P_\Xi
\left\{
        \operatorname{span}(X_1,\dots,X_n)
        \in\mathcal V_{\rm thin}(\mathcal B)
\right\} \\&=
\sum_{V\in\mathcal V_{\rm thin}(\mathcal B)}
\mathbf P_\Xi
\left\{
        \operatorname{span}(X_1,\dots,X_n)=V
\right\} \\
&\le
\sum_{\substack{I_0\subset[n]\\|I_0|=|I|-h}}
\sum_{V\in\mathcal V_{\rm thin}(\mathcal B)}
\mathbf P_\Xi
\left\{
        \operatorname{span}(X_j:j\in I_0)=V
\right\}
e_s(V) \\
&\le
\binom n{|I|-h}\binom Ns\tau^{d_0s}
\le
4^n\tau^{d_0s}.
\end{aligned}
\]
Since $d_0\ge3\lambda n/4$ and $s\ge h/2$, we have
\[
        4^n\tau^{d_0s}
        \le
        \exp\left(
        2n\log2+\frac{3\lambda}{8}nh\log\tau
        \right)
        \le
        e^{-c_{\ref{clm:thin}}nh}.
\]
Since $\tau\in(0,1)$, taking $h_{\ref{prop:fixed-defect}}$ sufficiently large gives the last
inequality for some $c_{\ref{clm:thin}}>0$.
\end{proof}

\begin{proof}[Proof of Claim~\ref{clm:thick}]
By Maclaurin's inequality \cite[Theorem~52]{MR0046395},
\[
        \left(
        \frac{e_s(V)}{\binom Ns}
        \right)^{1/s}
        \le
        \frac1N\sum_{j\in J}\rho_j(V)
        =
        \overline\rho(V).
\]
Hence, every $V\in\mathcal V_{\rm thick}(\mathcal B)$ satisfies
\begin{equation}\label{eq:rho-thick}
        \overline\rho(V)>\tau^{d_0}.
\end{equation}
On the other hand, averaging \eqref{eq:relative-good-columns} over $j\in J$ gives $\overline\rho(V)
        \le
        \gamma_{\ref{lem:hunter-relative}}^{h/Q}
        \overline\sigma(V)$. 

Let $\widetilde Y$ be obtained by first choosing $j$ uniformly from $J$ and then sampling $Y_j$. Then, $\mathbf P\{\widetilde Y\in V\} =\overline\sigma(V)$.
Set 
        $m:=\left\lfloor {d_0}/{2}\right\rfloor$. 
If $W\subset\bbR^I$ has dimension $r\le m$, then 
        $\operatorname{codim}_{E_j}(W\cap E_j)
        \ge
        |C_j|-r
        \ge
        d_0-r$. 
Since the largest atom of $\nu_p$ equals $1-2p$,
Lemma~\ref{lem:odlyzko} gives
\[
        \mathbf P\{Y_j\in W\}
        \le
        (1-2p)^{d_0-r},
        \qquad j\in J.
\]
Averaging over $j\in J$, we obtain
\begin{equation}\label{eq:mixture-odlyzko}
        \mathbf P\{\widetilde Y\in W\}
        \le
        (1-2p)^{d_0-r}.
\end{equation}

Let $\widetilde Y_1,\dots,\widetilde Y_m$ be independent copies of
$\widetilde Y$, independent of $X_1,\dots,X_n$. Fix $V\in\mathcal V_{\rm thick}(\mathcal B)$. Since $\overline\rho(V)
        \le
        \gamma_{\ref{lem:hunter-relative}}^{h/Q}
        \overline\sigma(V)$,
\begin{align}
\mathbf P
\left\{
        \operatorname{span}
        (X_1,\dots,X_n,\widetilde Y_1,\dots,\widetilde Y_m)
        =V
\right\}&\ge
\mathbf P
\left\{
        \operatorname{span}(X_1,\dots,X_n)=V,\,
        \widetilde Y_1,\dots,\widetilde Y_m\in V
\right\} \notag\\
&=
\mathbf P_\Xi
\left\{
        \operatorname{span}(X_1,\dots,X_n)=V
\right\}
\overline\sigma(V)^m \notag\\
&\ge
\gamma_{\ref{lem:hunter-relative}}^{-hm/Q}
\mathbf P_\Xi
\left\{
        \operatorname{span}(X_1,\dots,X_n)=V
\right\}
\overline\rho(V)^m.
\label{eq:thick-lower}
\end{align}

We next bound the probability $\mathbf P
        \left\{
        \operatorname{span}
        (X_1,\dots,X_n,\widetilde Y_1,\dots,\widetilde Y_m)=V
        \right\} $ from above. 
        
If $\operatorname{span}(X_1,\dots,X_n,\widetilde Y_1,\dots,
\widetilde Y_m)=V$, we can first select a basis of the span of the
comparison vectors and then extend it to a basis of $V$ using the
columns $X_1,\dots,X_n$. If the first basis contains $r$ vectors,
then the extension requires $|I|-h-r$ original columns.

For $0\le r\le m$, set 
        $W_r:=\operatorname{span}(\widetilde Y_1,\dots,\widetilde Y_r)$,
         $W_0:=\{0\}$. Since the comparison vectors are independent and identically
distributed, each selection of $r$ comparison indices gives the same
probability. Then, we obtain
\begin{equation}\label{eq:thick-basis-decomposition}
\begin{aligned}
&\mathbf P
\left\{
        \operatorname{span}
        (X_1,\dots,X_n,\widetilde Y_1,\dots,\widetilde Y_m)=V
\right\}\\
&\quad\le
\sum_{r=0}^{\min\{m,|I|-h\}}
\binom mr
\sum_{\substack{I_0\subset[n]\\|I_0|=|I|-h-r}}
\mathbf P
\left\{
\begin{array}{l}
\operatorname{span}
\bigl((X_j)_{j\in I_0},
      \widetilde Y_1,\dots,\widetilde Y_r\bigr)=V,\\[1mm]
X_j\in V
\text{ for all }j\in[n]\setminus I_0,\\[1mm]
\widetilde Y_\ell\in W_r
\text{ for all }r<\ell\le m
\end{array}
\right\}.
\end{aligned}
\end{equation}
Fix $r$ and $I_0$ in this sum, and define $\mathcal F_0
        :=
        \sigma\bigl((X_j)_{j\in I_0},
        \widetilde Y_1,\dots,\widetilde Y_r\bigr)$. The subspace $W_r$ and the event $\left\{
        \operatorname{span}
        \bigl((X_j)_{j\in I_0},
        \widetilde Y_1,\dots,\widetilde Y_r\bigr)=V
        \right\}$ 
are determined by $\mathcal F_0$. Conditional on $\mathcal F_0$,
the vectors $(X_j)_{j\notin I_0}$ and
$\widetilde Y_{r+1},\dots,\widetilde Y_m$ remain mutually independent
with their original distributions.

Since $|J\setminus I_0|\ge N-(|I|-h-r)=s+r$, and $s+r\le N$, independence and Maclaurin's inequality give
\[
\begin{aligned}
\mathbf P
\left\{
        X_j\in V\text{ for all }j\in[n]\setminus I_0
        \,\middle|\,\mathcal F_0
\right\}
&=
        \prod_{j\notin I_0}\rho_j(V)
        \le
        \prod_{j\in J\setminus I_0}\rho_j(V)
        \le
        e_{s+r}(V)\\
&\le
        \binom N{s+r}\overline\rho(V)^{s+r}
        \le
        2^n\overline\rho(V)^{s+r}.
\end{aligned}
\]

Moreover, $\dim W_r\le r$. For every $r<\ell\le m$,
\eqref{eq:mixture-odlyzko} gives
\[
        \mathbf P
        \left\{
        \widetilde Y_\ell\in W_r
        \,\middle|\,\mathcal F_0
        \right\}
        \le(1-2p)^{d_0-\dim{W_r}}\le
        (1-2p)^{d_0-r}.
\]
Since $r\le m\le d_0/2$ and $\tau=(1-2p)^{1/4}$,
\eqref{eq:rho-thick} implies 
        $(1-2p)^{d_0-r}
        \le
        (1-2p)^{d_0/2}
        <
        \tau^{d_0}
        <
        \overline\rho(V)$. 
By independence,
\[
\begin{aligned}
\mathbf P
\left\{
        \widetilde Y_\ell\in W_r
        \text{ for all }r<\ell\le m
        \,\middle|\,\mathcal F_0
\right\}=
        \prod_{\ell=r+1}^m
        \mathbf P
        \left\{
        \widetilde Y_\ell\in W_r
        \,\middle|\,\mathcal F_0
        \right\}
        \le
        \overline\rho(V)^{m-r}.
\end{aligned}
\]
For $r=m$, the event is certain and the product equals one, so the
same bound holds.

Combining these conditional estimates, we obtain
\[
\begin{aligned}
&\mathbf P
\left\{
\operatorname{span}
\bigl((X_j)_{j\in I_0},
      \widetilde Y_1,\dots,\widetilde Y_r\bigr)=V,
X_j\in V
\text{ for all }j\in[n]\setminus I_0,
\widetilde Y_\ell\in W_r
\text{ for all }r<\ell\le m
\right\}\\
&\quad=
\mathbf E\bigg[
\mathbf 1_{\left\{
        \operatorname{span}
        \bigl((X_j)_{j\in I_0},
        \widetilde Y_1,\dots,\widetilde Y_r\bigr)=V
        \right\}}\mathbf P\left\{X_j\in V\text{ for all }j\in[n]\setminus I_0\,\middle|\,\mathcal F_0\right\}
\\
&\qquad\qquad\mathbf P\left\{\widetilde Y_\ell\in W_r\text{ for all }r<\ell\le m\,\middle|\,\mathcal F_0\right\}
\bigg]\\
&\quad\le
        2^n\overline\rho(V)^{s+r}\overline\rho(V)^{m-r}
        \mathbf P
        \left\{
        \operatorname{span}
        \bigl((X_j)_{j\in I_0},
        \widetilde Y_1,\dots,\widetilde Y_r\bigr)=V
        \right\}\\
&\quad=
        2^n\overline\rho(V)^{m+s}
        \mathbf P
        \left\{
        \operatorname{span}
        \bigl((X_j)_{j\in I_0},
        \widetilde Y_1,\dots,\widetilde Y_r\bigr)=V
        \right\}.
\end{aligned}
\]
Combining this bound with \eqref{eq:thick-basis-decomposition} and using
$\overline\rho(V)^s\le1$, we obtain
\[
\begin{aligned}
&\mathbf P
\left\{
        \operatorname{span}
        (X_1,\dots,X_n,\widetilde Y_1,\dots,\widetilde Y_m)=V
\right\}\\
&\quad\le
        2^n\overline\rho(V)^m
        \sum_{r=0}^{\min\{m,|I|-h\}}
        \binom mr
        \sum_{\substack{I_0\subset[n]\\|I_0|=|I|-h-r}}
        \mathbf P
        \left\{
        \operatorname{span}
        \bigl((X_j)_{j\in I_0},
        \widetilde Y_1,\dots,\widetilde Y_r\bigr)=V
        \right\}.
\end{aligned}
\]
Combining this with \eqref{eq:thick-lower}, we obtain
\[
\begin{aligned}
&\mathbf P_\Xi
\left\{
        \operatorname{span}(X_1,\dots,X_n)=V
\right\}\\
&\quad\le
        2^n\gamma_{\ref{lem:hunter-relative}}^{hm/Q}
        \sum_{r=0}^{\min\{m,|I|-h\}}
        \binom mr
        \sum_{\substack{I_0\subset[n]\\|I_0|=|I|-h-r}}
        \mathbf P
        \left\{
        \operatorname{span}
        \bigl((X_j)_{j\in I_0},
        \widetilde Y_1,\dots,\widetilde Y_r\bigr)=V
        \right\}.
\end{aligned}
\]

Summing the preceding bound over
$V\in\mathcal V_{\rm thick}(\mathcal B)$ gives
\[
\begin{aligned}
\mathbf P_\Xi
\left\{
        \operatorname{span}(X_1,\dots,X_n)
        \in\mathcal V_{\rm thick}(\mathcal B)
\right\}
&\le
        2^n\gamma_{\ref{lem:hunter-relative}}^{hm/Q}
        \sum_{r=0}^{\min\{m,|I|-h\}}
        \binom mr\binom n{|I|-h-r}\\
&\le
        2^{m+2n}\gamma_{\ref{lem:hunter-relative}}^{hm/Q}
        \le
        2^{3n}\gamma_{\ref{lem:hunter-relative}}^{hm/Q}.
\end{aligned}
\]
Here we used
\[
        \sum_{r=0}^{\min\{m,|I|-h\}}
        \binom mr\binom n{|I|-h-r}
        \le
        2^n\sum_{r=0}^m\binom mr
        =
        2^{m+n}.
\]

Since $d_0\ge3\lambda n/4$, we have
$m=\lfloor d_0/2\rfloor\ge3\lambda n/16$ for all sufficiently
large $n$. Hence
\[
\begin{aligned}
        2^{3n}\gamma_{\ref{lem:hunter-relative}}^{hm/Q}\le
        \exp\left(
        3n\log2+
        \frac{3\lambda}{16Q}nh
        \log\gamma_{\ref{lem:hunter-relative}}
        \right)\le
        e^{-c_{\ref{clm:thick}}nh}.
\end{aligned}
\]
Taking $h_{\ref{prop:fixed-defect}}$ sufficiently large gives the
last inequality for some $c_{\ref{clm:thick}}>0$ depending only on
the fixed parameters.
\end{proof}

We now combine Proposition~\ref{prop:fixed-defect} with Lemma~\ref{lem:defect-tail} to obtain the corresponding bound for the random support.
\begin{lemma}\label{lem:unstructured-corank}
Fix $K>0$. There exists $c_{\ref{lem:unstructured-corank}}>0$,
depending only on the fixed parameters and $K$, such that the following
holds. Let $I\subset[n]$, set $r_0:=n-|I|$, and suppose that
$r_0\le K\log n$. Then, for every integer $1\le h\le K\log n$,
\[
        \mathbf P_{A,\Xi}
        \left\{
        \operatorname{corank}(B_I)=h,\,
        \mathcal S_I^c
        \right\}
        \le e^{-c_{\ref{lem:unstructured-corank}}nh}
\]
for all sufficiently large $n$.
\end{lemma}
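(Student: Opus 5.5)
We average Proposition~\ref{prop:fixed-defect} over the support $A$, splitting according to the size of the covering defect $\operatorname{D}(A)$. Fix $h$ with $1\le h\le K\log n$. Since $r_0\le K\log n$ and $d\ge\lambda n$, we have $r_0\le d/4$ for all sufficiently large $n$, so Proposition~\ref{prop:fixed-defect} applies to every deterministic $A$. We distinguish two ranges of $h$: bounded $h$, and $h$ large compared with the defect threshold.

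\emph{Bounded $h$.} Suppose $h<2Q+h_{\ref{prop:fixed-defect}}$. Then it suffices to show an $e^{-cn}$ bound, since $nh\le(2Q+h_{\ref{prop:fixed-defect}})n$. The event $\{\operatorname{corank}(B_I)=h\}$ with $h\ge1$ implies that $B_I$ has a nonzero left kernel vector $w$; extending by zero gives $B^Tw=0$, so $B$ is singular. Theorem~\ref{thm:ssv} at $\kappa=0$ then gives probability at most $e^{-c_{\ref{thm:ssv}}n}$. This is the base input mentioned in the proof strategy.

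\emph{Large $h$.} Suppose $h\ge2Q+h_{\ref{prop:fixed-defect}}$. Set the defect threshold $r^*:=\lfloor (h-h_{\ref{prop:fixed-defect}})/(2Q)\rfloor$, which is at least $1$. On the event $\{\operatorname{D}(A)\le r^*-1\}$ we have $q=Q(\operatorname{D}(A)+1)\le Qr^*$, so $h\ge 2q+h_{\ref{prop:fixed-defect}}$. Conditioning on $A$ and applying Proposition~\ref{prop:fixed-defect} gives a bound $e^{-c_{\ref{prop:fixed-defect}}nh}$ uniformly on this event. On the complement $\{\operatorname{D}(A)\ge r^*\}$ we bound the conditional probability by $1$. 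Since $r^*\le h\le K\log n$, Lemma~\ref{lem:defect-tail} gives $\mathbf P_A\{\operatorname{D}(A)\ge r^*\}\le e^{-c_{\ref{lem:defect-tail}}nr^*}$. Because $h\ge 2Q+h_{\ref{prop:fixed-defect}}$, we have $r^*\ge h/(C_{Q,h_{\ref{prop:fixed-defect}}})$ for a constant depending only on the fixed parameters; for instance, $r^*\ge (h-h_{\ref{prop:fixed-defect}})/(4Q)$ when $h-h_{\ref{prop:fixed-defect}}\ge 2Q$, and this is $\gtrsim h$ once $h\ge 2h_{\ref{prop:fixed-defect}}$. The intermediate window can again be handled by the bounded case. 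Summing the two contributions yields $e^{-cnh}$.

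\emph{Main obstacle.} The delicate point is choosing the threshold so that two requirements hold at once. The defect threshold must be small enough, of order $h/Q$, for the hypothesis $h\ge 2q+h_{\ref{prop:fixed-defect}}$ to hold. It must also be large enough, linear in $h$, for the defect tail to retain the $nh$ exponent. Both hold with $r^*\asymp h/Q$. This also uses that the tail bound is needed only for $r^*\le K\log n$, which is guaranteed by $h\le K\log n$; the constant $c_{\ref{lem:defect-tail}}$ therefore depends on $K$, which is permitted. A secondary technical check is that the constant $\delta$ in $\mathcal S_I$ exceeds $\eta$, as required in Lemma~\ref{lem:effective-codimension}; this is arranged by choosing $\eta\ll\delta$ among the fixed parameters.
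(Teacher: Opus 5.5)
Your proposal is correct and follows the paper's proof: bounded $h$ is handled by Theorem~\ref{thm:ssv} at $\kappa=0$, and large $h$ by averaging Proposition~\ref{prop:fixed-defect} over $\{\operatorname{D}(A)<r^*\}$ and applying the defect tail bound on the complement; your $r^*$ is exactly the paper's $r_h+1$. The only cosmetic difference is that the paper fixes the cutoff $h_1=2h_{\ref{prop:fixed-defect}}+8Q$ in advance so that $r_h+1\ge h/(4Q)$ holds directly, whereas you send the window $2Q+h_{\ref{prop:fixed-defect}}\le h<2h_{\ref{prop:fixed-defect}}$ back to the bounded case (which is also unnecessary, since $r^*\ge1$ there already gives $r^*\gtrsim h$).
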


\begin{proof}
Since $d\ge\lambda n$, we have $r_0\le K\log n\le d/4$ for all
sufficiently large $n$. Set $h_1:=2h_{\ref{prop:fixed-defect}}+8Q$. 
If $1\le h<h_1$, every nonzero vector in $\ker(B_I^T)$ extends by zero outside $I$ to a nonzero vector in $\ker(B^T)$. Hence, Theorem~\ref{thm:ssv} gives
\[
\begin{aligned}
        \mathbf P_{A,\Xi} \left\{
        \operatorname{corank}(B_I)=h,\,
        \mathcal S_I^c
        \right\} \le
        \mathbf P_{A,\Xi}\{\rank(B)<n\}\le
        e^{-c_{\ref{thm:ssv}}n}
        \le
        e^{-(c_{\ref{thm:ssv}}/h_1)nh}.
\end{aligned}
\]

Assume that $h\ge h_1$, and set
\[
        r_h
        :=
        \left\lfloor
        \frac{h-h_{\ref{prop:fixed-defect}}}{2Q}
        \right\rfloor-1.
\]
For every support $A$ satisfying $\operatorname{D}(A)\le r_h$, $2Q\left(\operatorname{D}(A)+1\right)
        +h_{\ref{prop:fixed-defect}}
        \le h$. 
Therefore, Proposition~\ref{prop:fixed-defect} gives
\[
        \mathbf P_{A,\Xi}\{\operatorname{corank}(B_I)=h, \mathcal S_I^c\mid A\}
        \le e^{-c_{\ref{prop:fixed-defect}}nh}
\]
on the event $\{\operatorname{D}(A)\le r_h\}$.
Moreover, the choice of $h_1$ implies
\[
        r_h+1
        =
        \left\lfloor
        \frac{h-h_{\ref{prop:fixed-defect}}}{2Q}
        \right\rfloor
        \ge
        \frac{h-h_{\ref{prop:fixed-defect}}}{2Q}-1
        \ge
        \frac{h}{4Q}.
\]
Since $1\le r_h+1\le h\le K\log n$,
Lemma~\ref{lem:defect-tail} gives
\[
        \mathbf P_A\{\operatorname{D}(A)>r_h\}
        \le
        e^{-c_{\ref{lem:defect-tail}}n(r_h+1)}
        \le
        e^{-(c_{\ref{lem:defect-tail}}/(4Q))nh}.
\]

Averaging over $A$, we obtain
\[
\begin{aligned}
        \mathbf P_{A,\Xi}\{ \operatorname{corank}(B_I)=h, \mathcal S_I^c\}
&=
        \mathbf E_A
        \left[
        \mathbf P\{ \left\{         \operatorname{corank}(B_I)=h         \right\}         \cap\mathcal S_I^c\mid A\}
        \right]\\
&\le
        e^{-c_{\ref{prop:fixed-defect}}nh}
        +
        \mathbf P_A\{\operatorname{D}(A)>r_h\}\\
&\le
        e^{-c_{\ref{prop:fixed-defect}}nh}
        +
        e^{-(c_{\ref{lem:defect-tail}}/(4Q))nh}\\
&\le
        e^{-c_{\ref{lem:unstructured-corank}}nh},
\end{aligned}
\]
where the last inequality holds for all sufficiently large $n$ after choosing $c_{\ref{lem:unstructured-corank}}>0$ sufficiently small. Taking this constant also at most $c_{\ref{thm:ssv}}/h_1$ includes the range $1\le h<h_1$ and completes the proof.
\end{proof}

\subsection{Completion of the proof}

We now combine the sparse kernel reduction with the estimate on
$\mathcal S_I^c$ to complete the row deletion argument.

\begin{proposition}\label{prop:row-deletion}
There exist constants
$K_{\ref{prop:row-deletion}},c_{\ref{prop:row-deletion}}>0$, depending
only on $\lambda$, such that the following holds. Let $I\subset[n]$,
set $r_0:=n-|I|$, and let $k\ge1$ be an integer satisfying $r_0+k\le K_{\ref{prop:row-deletion}}\log n$. Then, for all sufficiently large $n$,
\[
        \mathbf P_{A,\Xi}
        \left\{
        \operatorname{corank}(B_I)\ge k
        \right\}
        \le e^{-c_{\ref{prop:row-deletion}}nk}.
\]
\end{proposition}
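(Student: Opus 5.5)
We argue by induction on $k$, keeping the sum $r_0+k$ fixed below $K\log n$, where we set $K:=K_{\ref{prop:row-deletion}}$. The goal is a bound of the form $e^{-c_* nk}$ with a constant $c_*$ independent of $I$ and $k$. The base case is $k=1$. There, every nonzero vector of $\ker(B_I^T)$ extends by zero to a nonzero vector of $\ker(B^T)$. Theorem~\ref{thm:ssv} at $\kappa=0$ therefore gives $\mathbf P\{\operatorname{corank}(B_I)\ge1\}\le\mathbf P\{\rank(B)<n\}\le e^{-c_{\ref{thm:ssv}}n}$.

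For the inductive step with $k\ge2$, split the event as
\[
\mathcal E_k(I)\subset\bigl(\mathcal E_k(I)\cap\mathcal S_I\bigr)\cup\bigcup_{h=k}^{|I|}\bigl\{\operatorname{corank}(B_I)=h,\ \mathcal S_I^c\bigr\}.
\]
The sparse part is handled by Lemma~\ref{lem:sparse-kernel-reduction}, which gives at most $e^{-c_{\ref{lem:sparse-kernel-reduction}}n}\sup_{i}\mathbf P\{\mathcal E_{k-1}(I\setminus\{i\})\}$. The pair $(I\setminus\{i\},k-1)$ has the same value of $r_0+k$, so the induction hypothesis applies and yields $e^{-c_{\ref{lem:sparse-kernel-reduction}}n}e^{-c_*n(k-1)}$. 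Provided $c_*\le c_{\ref{lem:sparse-kernel-reduction}}/2$, this is at most $\tfrac12 e^{-c_*nk}$ once $n$ is large.

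For the unstructured part, the terms with $h\le K\log n$ are covered by Lemma~\ref{lem:unstructured-corank}, which applies since $r_0\le K\log n$. These terms sum to at most $\sum_{h\ge k}e^{-c_{\ref{lem:unstructured-corank}}nh}\le 2e^{-c_{\ref{lem:unstructured-corank}}nk}$. The terms with $h>K\log n$ are contained in the event $\{\operatorname{corank}(B)\ge h-r_0\}$, because restoring the $r_0$ deleted rows lowers the corank by at most $r_0$. Taking $K$ large compared with $K_{\ref{lem:large-corank}}$ and $r_0\le K\log n/2$ (the case $r_0>K\log n/2$ is treated below), we have $h-r_0\ge K_{\ref{lem:large-corank}}\log n$. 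Proposition~\ref{lem:large-corank} then bounds these terms by $e^{-c_{\ref{lem:large-corank}}n(h-r_0)}$, and hence by $e^{-c nk}$.

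Choosing $c_*$ smaller than half of every constant that appears closes the induction. The unstructured terms contribute at most $\tfrac12e^{-c_*nk}$ once $n$ is large, because the constants in them exceed $2c_*$.

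The main obstacle is the dependence of the constants on $K$. The constant in Lemma~\ref{lem:unstructured-corank} depends on $K$, while $K$ has to be large relative to $K_{\ref{lem:large-corank}}$, and $c_*$ must not deteriorate over the roughly $K\log n$ inductive steps. This works because each sparse step gains a fixed factor $e^{-c_{\ref{lem:sparse-kernel-reduction}}n}$, and we require $c_*\le c_{\ref{lem:sparse-kernel-reduction}}/2$, so the rate $c_*$ never decays.

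The other delicate point is large $r_0$, where the comparison with $\operatorname{corank}(B)$ breaks down. To avoid it, we would instead bound all $h>K\log n$ at once. On $\mathcal S_I^c$ with $\operatorname{corank}(B_I)\ge h$ there is also corank at least $\lfloor K\log n\rfloor$. Applying Lemma~\ref{lem:unstructured-corank} with the parameter $2K$ at $h'=\lfloor K\log n\rfloor$ would control the event $\{\operatorname{corank}\ge h'\}$ through the exact-corank events at levels $h'\le h\le 2K\log n$. For the remainder one can use a crude reduction: delete $h-h'$ further rows that lie in the span of the others, at a combinatorial cost of $n^{h}$. Fixing $K_{\ref{prop:row-deletion}}$ first and then the constants $c$ handles this.
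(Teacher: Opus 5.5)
Your induction scheme, the base case, the sparse step via Lemma~\ref{lem:sparse-kernel-reduction}, and the treatment of the exact-corank levels $k\le h\le K\log n$ via Lemma~\ref{lem:unstructured-corank} are exactly what the paper does. The gap is in the tail $h>K\log n$.

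You compare with $B$ through the lossy inequality $\operatorname{corank}(B)\ge\operatorname{corank}(B_I)-r_0$, and this forces a split on $r_0$. For $r_0>K\log n/2$, your proposed remedy does not work as stated. You do not identify any probability gain for the $h-h'$ additional deleted rows, and no gain of the kind used in Lemma~\ref{lem:sparse-kernel-reduction} is available there. On $\mathcal S_I^c$ every linear dependence among the rows involves more than $\delta n$ of them, and at that size the Odlyzko bound no longer beats the union bound over supports. So nothing pays for the combinatorial factor $n^{h}$, and the case of large $r_0$ and small $k$ remains unproved as written.

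The missing observation is one you already used in your base case. Extending by zero outside $I$ injects $\ker(B_I^T)$ into $\ker(B^T)$, so $\operatorname{corank}(B_I)\le\operatorname{corank}(B)$ for every $I$. With corank defined as the number of rows minus the rank, restoring rows never lowers it, rather than lowering it by at most $r_0$.

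Now set $\ell:=\lfloor K\log n\rfloor$. Then $\{\operatorname{corank}(B_I)>\ell\}\subset\{\operatorname{corank}(B)\ge\ell+1\}$. Proposition~\ref{lem:large-corank} bounds the latter event by $e^{-c_{\ref{lem:large-corank}}n(\ell+1)}\le\frac14e^{-c_*nk}$, since $k\le\ell$ and $c_*\le c_{\ref{lem:large-corank}}/2$. This needs only $K\ge K_{\ref{lem:large-corank}}$, no condition on $r_0$, and no second application of Lemma~\ref{lem:unstructured-corank}. It is precisely how the paper handles the third term of its decomposition.

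Even keeping your lossy inequality, you were one step from a valid argument. Apply Lemma~\ref{lem:unstructured-corank} with parameter $2K$ up to level $2K\log n$. For $h>2K\log n$, use $h-r_0>K\log n\ge K_{\ref{lem:large-corank}}\log n$, which holds because $r_0\le K\log n$, and invoke Proposition~\ref{lem:large-corank}.
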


\begin{proof}
Choose $K_{\ref{prop:row-deletion}}\ge K_{\ref{lem:large-corank}}$
and apply Lemma~\ref{lem:unstructured-corank} with
$K=K_{\ref{prop:row-deletion}}$. Fix 
        $0<c_{\ref{prop:row-deletion}}
        \le \min\left\{
        c_{\ref{thm:ssv}},
        c_{\ref{lem:sparse-kernel-reduction}},
        c_{\ref{lem:unstructured-corank}},
        c_{\ref{lem:large-corank}}
        \right\}/2$, 
and set $\ell:=\lfloor K_{\ref{prop:row-deletion}}\log n\rfloor$.
We assume that $n$ is sufficiently large; in particular, $\ell<n$.

Extending vectors by zero outside $I$ gives an injective linear map
from $\ker(B_I^T)$ into $\ker(B^T)$. Hence
\begin{equation}\label{eq:row-corank-comparison}
        \operatorname{corank}(B_I)
        =\dim\ker(B_I^T)
        \le\dim\ker(B^T)
        =\operatorname{corank}(B).
\end{equation}

We argue by induction on $k$, proving the assertion for every $I$ in
the stated range. If $k>|I|$, the event is empty, so we may assume
$k\le |I|$.

If $k=1$, \eqref{eq:row-corank-comparison} and
Theorem~\ref{thm:ssv} give
\[
        \mathbf P_{A,\Xi}\{\mathcal E_1(I)\}
        \le
        \mathbf P_{A,\Xi}\{\rank(B)<n\}
        \le
        e^{-c_{\ref{thm:ssv}}n}
        \le
        e^{-c_{\ref{prop:row-deletion}}n}.
\]

Assume that $k\ge2$ and that the assertion holds for $k-1$. For every
$i\in I$, we have $n-|I\setminus\{i\}|=r_0+1$ and
        $(r_0+1)+(k-1)
        =
        r_0+k
        \le
        K_{\ref{prop:row-deletion}}\log n$.
Thus, the induction hypothesis applies to every $B_{I\setminus\{i\}}$.

Since $k\le\ell$, we have
\begin{equation}\label{eq:row-deletion-decomposition}
\begin{aligned}
\mathbf P_{A,\Xi}\{\mathcal E_k(I)\}
&= \mathbf P_{A,\Xi}\{\mathcal E_k(I)\cap\mathcal S_I\} +
\sum_{h=k}^{\ell}
\mathbf P_{A,\Xi}
\left\{
        \operatorname{corank}(B_I)=h,\,
        \mathcal S_I^c
\right\} \\&\qquad+
\mathbf P_{A,\Xi}
\left\{
        \operatorname{corank}(B_I)>\ell,\,
        \mathcal S_I^c
\right\}.
\end{aligned}
\end{equation}

For the first term, Lemma~\ref{lem:sparse-kernel-reduction} and the
induction hypothesis give
\[
\begin{aligned}
\mathbf P_{A,\Xi}\{\mathcal E_k(I)\cap\mathcal S_I\}
\le
e^{-c_{\ref{lem:sparse-kernel-reduction}}n}
\sup_{i\in I}
\mathbf P_{A,\Xi}
\left\{
        \mathcal E_{k-1}(I\setminus\{i\})
\right\}\le
e^{-c_{\ref{lem:sparse-kernel-reduction}}n}
e^{-c_{\ref{prop:row-deletion}}n(k-1)}
\le
\frac12e^{-c_{\ref{prop:row-deletion}}nk}.
\end{aligned}
\]
The last inequality holds for all sufficiently large $n$, since
$c_{\ref{prop:row-deletion}}\le
c_{\ref{lem:sparse-kernel-reduction}}/2$.

For the second term, Lemma~\ref{lem:unstructured-corank} gives
\[
\begin{aligned}
\sum_{h=k}^{\ell}
\mathbf P_{A,\Xi}
\left\{
        \operatorname{corank}(B_I)=h,\,
        \mathcal S_I^c
\right\}\le
\sum_{h=k}^{\ell}
e^{-c_{\ref{lem:unstructured-corank}}nh}
\le
\frac{e^{-c_{\ref{lem:unstructured-corank}}nk}}
     {1-e^{-c_{\ref{lem:unstructured-corank}}n}}
\le
\frac14e^{-c_{\ref{prop:row-deletion}}nk}.
\end{aligned}
\]
Here we used $r_0\le K_{\ref{prop:row-deletion}}\log n$ and
$h\le\ell\le K_{\ref{prop:row-deletion}}\log n$. The last inequality
holds for all sufficiently large $n$ by the choice of
$c_{\ref{prop:row-deletion}}$.

For the third term, \eqref{eq:row-corank-comparison} implies
\[
\begin{aligned}
\mathbf P_{A,\Xi}
\left\{
        \operatorname{corank}(B_I)>\ell,\,
        \mathcal S_I^c
\right\}\le
\mathbf P_{A,\Xi}
\left\{
        \operatorname{corank}(B)\ge\ell+1
\right\}
\le
e^{-c_{\ref{lem:large-corank}}n(\ell+1)}
\le
\frac14e^{-c_{\ref{prop:row-deletion}}nk}.
\end{aligned}
\]
The second inequality follows from Proposition~\ref{lem:large-corank}, since $\ell+1>K_{\ref{prop:row-deletion}}\log n
        \ge K_{\ref{lem:large-corank}}\log n$.
The last inequality holds for all sufficiently large $n$, using
$k\le\ell$ and
$c_{\ref{prop:row-deletion}}\le c_{\ref{lem:large-corank}}/2$.

Combining these bounds with
\eqref{eq:row-deletion-decomposition} gives
\[
        \mathbf P_{A,\Xi}\left\{\operatorname{corank}(B_I)\ge k\right\}=\mathbf P_{A,\Xi}\{\mathcal E_k(I)\}
        \le e^{-c_{\ref{prop:row-deletion}}nk}.
\]
This completes the induction.
\end{proof}

\begin{proof}[Proof of Theorem~\ref{thm:rank}]
For $k\ge K_{\ref{prop:row-deletion}}\log n$, the required estimate
follows from Proposition~\ref{lem:large-corank}. For
$1\le k<K_{\ref{prop:row-deletion}}\log n$, apply
Proposition~\ref{prop:row-deletion} with $I=[n]$. Choosing 
        $0<c_{\ref{thm:rank}}
        \le
        \min\left\{
        c_{\ref{lem:large-corank}},
        c_{\ref{prop:row-deletion}}
        \right\}$, 
we obtain
\[
        \mathbf P_{A,\Xi}
        \left\{
        \rank(A\circ\Xi)\le n-k
        \right\}
        \le e^{-c_{\ref{thm:rank}}nk}
\]
for every $1\le k\le n$ and all sufficiently large $n$.
\end{proof}

\section{Proof of Theorem~\ref{thm:ssv}}\label{sec:ssv} 
We first specify the parameters used in both proofs. Choose $\delta,\rho>0$ sufficiently small in terms of $\lambda$. Set 
$b_1:= {\rho}/{\sqrt2}$, $b_2:=1/{\sqrt\delta}$,  $b_3:={\rho^2\delta}/{2}$. 
Choose $0<\eta<\min\{\lambda^2/16,b_3/4,\delta/2\}$ and take $Q=Q(\lambda,\eta)$ as in Proposition~\ref{prop:support}, sufficiently large that $\lambda Q\ge8\log(e/\eta)$. The remaining parameters are chosen below.

Throughout this section, we fix $\kappa\ge0$, and let $\chi>0$ be chosen at the end of the argument. We may assume that $\kappa\ge e^{-\chi n}$. Indeed, once the desired bound has been established for
$\kappa=e^{-\chi n}$, the event $\{s_n(B)\le\kappa\}$ implies that, for every $0\le\kappa<e^{-\chi n}$,
\[
        \mathbf P_{A,\Xi}\{s_n(A\circ\Xi)\le\kappa\}
        \le
        \mathbf P_{A,\Xi}\{s_n(A\circ\Xi)\le e^{-\chi n}\}.
\]
Let $\mathcal G$ denote the event that both $\mathcal T(A)$ and $\mathcal T^*(A)$ are $(Q,\eta,\lambda)$-well-spread. Since $\mathbf P_A(\mathcal G^c)\le e^{-c_{\ref{prop:support}}n}$, it suffices to prove that, for every deterministic $A\in\mathcal G$,
\[
        \mathbf P_\Xi\{s_n(A\circ\Xi)\le \kappa\}
        \le C_\lambda\kappa\sqrt n+e^{-c_\lambda n}. 
\]
Indeed,
\[
\mathbf P_{A,\Xi}\{s_n(A\circ\Xi)\le\kappa\}
\le \mathbf P_A(\mathcal G^c)+
\sup_{A\in\mathcal G}\mathbf P_\Xi\{s_n(A\circ\Xi)\le\kappa\}.
\]
From now on, $A\in\mathcal G$ is fixed and all probabilities are with respect to the signs $\Xi$ only.

\subsection{Fixed vectors and compressible directions}

\begin{lemma}\label{lem:fixed-vector}
There exists $c_{\ref{lem:fixed-vector}}>0$, depending only on $\lambda$,
such that, for every fixed $A\in \mathcal{M}_{n,d}$ and every deterministic
$x\in\mathbb S^{n-1}$,
\[
        \mathbf P_\Xi\left\{\|(A\circ\Xi)x\|_2
        \le c_{\ref{lem:fixed-vector}}\sqrt n\right\}
        \le e^{-c_{\ref{lem:fixed-vector}}n}.
\]
Moreover, the same result holds with $A\circ\Xi$ replaced by $(A\circ\Xi)^T$.
\end{lemma}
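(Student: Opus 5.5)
The idea is to write $(A\circ\Xi)x$ coordinatewise and use that its coordinates are independent once $A$ is fixed. Then a constant fraction of them have a fixed chance of being of order one, and a Chernoff bound finishes. For $i\in[n]$,
\[
 S_i:=\bigl((A\circ\Xi)x\bigr)_i=\sum_{j\in T_i(A)}\xi_{ij}x_j ,
\]
and $S_1,\dots,S_n$ are independent because they involve disjoint sets of signs. Each $S_i$ is a Rademacher sum with variance $\sigma_i^2:=\|x|_{T_i(A)}\|_2^2$.

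\emph{Step 1: many rows see a constant amount of $x$.} Since every column of $A$ has exactly $d$ ones,
\[
 \sum_{i=1}^n\sigma_i^2=\sum_{j=1}^n x_j^2\,|T_j^*(A)|=d\ge\lambda n .
\]
Each $\sigma_i^2\le1$, so the set $G:=\{i:\sigma_i^2\ge\lambda/2\}$ satisfies
\[
 \lambda n\le |G|+(n-|G|)\lambda/2 ,
\]
and therefore $|G|\ge\lambda n/2$.

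\emph{Step 2: single-row anticoncentration.} For $i\in G$, Khintchine's inequality gives $\mathbf E S_i^4\le3\sigma_i^4$. The Paley--Zygmund inequality then yields
\[
 \mathbf P\{S_i^2\ge\sigma_i^2/2\}\ge\frac{(1/2)^2\sigma_i^4}{3\sigma_i^4}=\frac1{12}.
\]
Hence $\mathbf P\{|S_i|\ge\sqrt\lambda/2\}\ge1/12$ for every $i\in G$.

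\emph{Step 3: tensorization.} Let $N$ be the number of indices $i\in G$ with $|S_i|\ge\sqrt\lambda/2$. It is a sum of independent indicators with mean at least $|G|/12\ge\lambda n/24$. By Chernoff's inequality, $\mathbf P\{N\le\lambda n/48\}\le e^{-c\lambda n}$. On the complementary event,
\[
 \|(A\circ\Xi)x\|_2^2\ge N\lambda/4\ge\lambda^2n/192 .
\]
So any $c_{\ref{lem:fixed-vector}}\le\min\{\lambda/14,\,c\lambda\}$ works.

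For the transpose, the same argument applies with the roles of rows and columns exchanged, using that every row of $A$ also has exactly $d$ ones. Here $\bigl((A\circ\Xi)^Tx\bigr)_j=\sum_{i\in T_j^*(A)}\xi_{ij}x_i$, and these are again independent over $j$.

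There is no real obstacle. The only point needing care is Step 1: $x$ may be localized, and it is the regularity of the support that guarantees that a positive fraction of rows capture a constant share of its mass.
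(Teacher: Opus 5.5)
Your proof is correct and follows essentially the same route as the paper: the row sums are independent once $A$ is fixed, column regularity gives $\sum_i\sigma_i^2=d\ge\lambda n$, and Khintchine together with Paley--Zygmund gives each row probability at least $1/12$ of satisfying $S_i^2\ge\sigma_i^2/2$. The only difference is the final concentration step: you pigeonhole to the at least $\lambda n/2$ rows with $\sigma_i^2\ge\lambda/2$ and apply an unweighted Chernoff bound, whereas the paper applies Bernstein's inequality to the weighted sum $\sum_i\sigma_i^2\mathbf 1_{\{X_i^2>\sigma_i^2/2\}}$; both reach the same conclusion with constants depending only on $\lambda$.
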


\begin{proof}
Set $B:=A\circ\Xi$ and
\[
        X_i:=B_i\cdot x=\sum_{j\in T_i(A)}\xi_{ij}x_j,
        \qquad
        \sigma_i^2:=\mathbf E_\Xi X_i^2=\|x|_{T_i(A)}\|_2^2.
\]
Since every column of $A$ contains exactly $d$ ones,
\[
        \sum_{i=1}^n\sigma_i^2
        =\sum_{j=1}^n d x_j^2=d\ge\lambda n,
        \qquad 0\le\sigma_i^2\le1.
\]
The Khintchine inequality for Rademacher sums gives $\mathbf E_\Xi X_i^4\le3\left(\mathbf E_\Xi X_i^2\right)^2=3\sigma_i^4$. 
Then, the Paley--Zygmund inequality in \cite[Lemma 3.5]{MR2146352} states
\[
        \mathbf P_\Xi\{X_i^2>\sigma_i^2/2\}
        \ge\frac{(\mathbf E_\Xi X_i^2)^2}{4\mathbf E_\Xi X_i^4}
        \ge\frac1{12}
\]
whenever $\sigma_i>0$. The case $\sigma_i=0$ is irrelevant in the weighted sum below. Let $\zeta_i:=\mathbf 1_{\{X_i^2> \sigma_i^2/2\}}$.
Since the rows of $B$ are independent after conditioning on $A$, the random variables $\zeta_1,\dots,\zeta_n$ are independent, and $\mathbf E_\Xi \zeta_i\ge {1}/{12}$ for every $i$ with $\sigma_i>0$.

Consider the weighted sum $S:=\sum_{i=1}^n \sigma_i^2\zeta_i$.
Then
\[
        \mathbf E_\Xi S
        =
        \sum_{i=1}^n \sigma_i^2\mathbf E_\Xi\zeta_i
        \ge
        \frac{1}{12}\sum_{i=1}^n\sigma_i^2
        =
        \frac{1}{12}d
        \ge
        \frac{1}{12}\lambda n.
\]
We now recall a concentration inequality for sums of independent, mean-zero, bounded random variables (see, e.g., \cite[Theorem 2.9.5]{MR3837109}).

Let $Y_1,\dots,Y_n$ be independent, mean-zero random variables satisfying $|Y_i| \le K$ all $i$. Then, for every $t \ge 0$, we have
\[
\mathbf{P}\left\{ \left| \sum_{i=1}^n Y_i \right| \ge t \right\}
\le 2\exp\left( -\frac{t^2/2}{\sigma^2 + Kt/3} \right),
\]
where $\sigma^2 = \sum_{i=1}^n \mathbf{E} Y_i^2$ is the variance of the sum.

Apply this to $Y_i:=\sigma_i^2(\zeta_i-\mathbf E_\Xi\zeta_i)$. Since $0\le\sigma_i^2\le 1$, we have $|Y_i|\le1$, and $\sum_{i=1}^n \mathbf E_\Xi Y_i^2\le \sum_{i=1}^n
\sigma_i^4\le\sum_{i=1}^n \sigma_i^2=d\le n$.
Taking $t= d/24$, Bernstein's inequality gives
\[
\mathbf P_\Xi\left\{S\le \frac{d}{24}\right\}\le\mathbf P_\Xi\left\{\sum_{i=1}^n Y_i\le -\frac{d}{24}\right\}\le2\exp(-c n),
\]
where $c>0$ depends only on $\lambda$. Since $\|Bx\|_2^2 =\sum_{i=1}^n X_i^2 \ge \sum_{i=1}^n  {\sigma_i^2} \zeta_i/2 = S/2$, we have 
\[
        \mathbf P_\Xi\left\{\|Bx\|_2\le \sqrt{\frac{d}{48}}\right\}\le2\exp(-c n).
\]
The proof for $B^T$ is identical, using the column supports of $A$, equivalently the row supports of $A^T$.
\end{proof}

We use the standard decomposition of the unit sphere into compressible and incompressible vectors.

\begin{definition}[Compressible and incompressible vectors]\label{def:compressible}
Fix $\delta, \rho \in (0, 1)$. A vector $x \in \mathbb{R}^n$ is called \textit{sparse} if $|\operatorname{supp}(x)| \leq \delta n$. A vector $x \in \mathbb{S}^{n-1}$ is called \textit{compressible} if $x$ is within Euclidean distance $\rho$ from the set of all sparse vectors. A vector $x \in \mathbb{S}^{n-1}$ is called \textit{incompressible} if it is not compressible. The sets of sparse, compressible, and incompressible vectors will be denoted by $\operatorname{Sparse}, \operatorname{Comp}$ and $\operatorname{Incomp}$ respectively.
\end{definition}

Then, we show that incompressible vectors have many coordinates of the order $n^{-1/2}$.

\begin{lemma}[{\cite[Lemma 3.4]{MR2407948}}]\label{lem:spread}
Let $x\in\operatorname{Incomp}(\delta,\rho)$. Then there exists a set $\sigma\subset[n]$ of cardinality $|\sigma| \geq \rho^2\delta n / 2$ and such that
\begin{align*}
\frac{\rho}{\sqrt {2n}}\le |x_k|\le \frac{1}{\sqrt {\delta n}} \qquad \text{for all } k \in \sigma.
\end{align*}
\end{lemma}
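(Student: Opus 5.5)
This is the standard Rudelson--Vershynin lemma: every incompressible vector has many coordinates of size comparable to $n^{-1/2}$. I would prove it by contradiction-free counting, splitting the coordinates of $x$ into large ones, small ones, and the middle set $\sigma$ we want.

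\emph{Step 1: define the two exceptional sets.} Fix $x\in\Incomp(\delta,\rho)$, so $\|x\|_2=1$. Set
\[
\sigma_1:=\Bigl\{k:|x_k|\le\frac1{\sqrt{\delta n}}\Bigr\},\qquad
\sigma_2:=\Bigl\{k:|x_k|\ge\frac{\rho}{\sqrt{2n}}\Bigr\},
\]
and take $\sigma:=\sigma_1\cap\sigma_2$. Every $k\in\sigma$ satisfies both required bounds on $|x_k|$, so only the cardinality of $\sigma$ needs to be shown.

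\emph{Step 2: use incompressibility through the complement of $\sigma_1$.} Since $\sum_k x_k^2=1$, Chebyshev counting gives $|\sigma_1^c|\le\delta n$. Hence $y:=x\,\mathbf 1_{\sigma_1^c}$ is sparse. Because $x$ is incompressible, $\dist(x,\mathrm{Sparse})>\rho$, and therefore
\[
\|x\,\mathbf 1_{\sigma_1}\|_2=\|x-y\|_2>\rho,
\qquad\text{that is,}\qquad
\|x\,\mathbf 1_{\sigma_1}\|_2^2>\rho^2 .
\]

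\emph{Step 3: remove the small coordinates and count.} On $\sigma_2^c$ every coordinate is below $\rho/\sqrt{2n}$, so
\[
\|x\,\mathbf 1_{\sigma_2^c}\|_2^2\le n\cdot\frac{\rho^2}{2n}=\frac{\rho^2}{2}.
\]
Subtracting this from Step 2 gives
\[
\|x\,\mathbf 1_{\sigma}\|_2^2\ \ge\ \|x\,\mathbf 1_{\sigma_1}\|_2^2-\|x\,\mathbf 1_{\sigma_2^c}\|_2^2\ >\ \rho^2-\frac{\rho^2}{2}=\frac{\rho^2}{2}.
\]
On the other hand, every coordinate in $\sigma\subset\sigma_1$ satisfies $x_k^2\le 1/(\delta n)$, so $\|x\,\mathbf 1_\sigma\|_2^2\le|\sigma|/(\delta n)$. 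Combining the two bounds yields
\[
|\sigma|\ \ge\ \frac{\rho^2\delta n}{2},
\]
which is the claimed cardinality.

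There is no real obstacle here. The only point needing care is the direction of the inequality in Step 2: incompressibility must be applied to the specific sparse vector $x\,\mathbf 1_{\sigma_1^c}$, which is the restriction of $x$ to its large coordinates.
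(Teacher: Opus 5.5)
Your proof is correct and is precisely the Rudelson--Vershynin argument of \cite[Lemma~3.4]{MR2407948}, which the paper cites instead of reproving. It intersects $\sigma_1=\{k:|x_k|\le 1/\sqrt{\delta n}\}$ with $\sigma_2=\{k:|x_k|\ge\rho/\sqrt{2n}\}$, applies incompressibility to the sparse vector $x\mathbf 1_{\sigma_1^c}$, and compares $\|x\mathbf 1_\sigma\|_2^2>\rho^2/2$ with $\|x\mathbf 1_\sigma\|_2^2\le|\sigma|/(\delta n)$; the one step you leave implicit, $\|x\mathbf 1_\sigma\|_2^2\ge\|x\mathbf 1_{\sigma_1}\|_2^2-\|x\mathbf 1_{\sigma_2^c}\|_2^2$, follows from $\sigma_1\subset\sigma\cup\sigma_2^c$.
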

By the above lemma, for every $x\in\operatorname{Incomp}(\delta,\rho)$ the set
\[
        \operatorname{Spread}(x):=
        \left\{k\in[n]:
        \frac{\rho}{\sqrt {2n}}\le |x_k|\le \frac{1}{\sqrt {\delta n}}
        \right\}
\]
satisfies $|\operatorname{Spread}(x)|\ge{\rho^2\delta}n/2$.

\begin{proof}
This is \cite[Lemma~3.4]{MR2407948}.
\end{proof}

For $S\subset\bbR^n$ and $t>0$, denote by $N(S,tB_2^n)$ the smallest
number of Euclidean balls of radius $t$ needed to cover $S$. We use the
following net construction due to Livshyts~\cite[Theorem~4]{MR4361906}, in the form stated in\cite[Theorem~3.8]{MR4523249}.

\begin{lemma}[{\cite[Theorem 4]{MR4361906}}]\label{lem:rr}
There exists an absolute constant $C_{\ref{lem:rr}}>0$ such that the
following holds. Let $S\subset\mathbb S^{n-1}$, let $\alpha\in(0,1/2)$ and $\beta\in(0,\alpha/10)$, and suppose that $n\ge1/\alpha^2$. There exists a deterministic net
$\mathcal N\subset S+(4\beta/\alpha)B_2^n$ satisfying
\[
        |\mathcal N|\le N(S,\beta B_2^n)
        \exp\left(C_{\ref{lem:rr}}\alpha^{0.08}
        \log(1/\alpha)n\right).
\]
Moreover, for every deterministic $n\times n$ matrix $M$ and every
$x\in S$, there exists $y\in\mathcal N$ such that
\[
        \|M(x-y)\|_2\le\frac{2\beta}{\alpha\sqrt n}\|M\|_{\HS}.
\]
\end{lemma}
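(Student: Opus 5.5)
Since this is the net theorem of Livshyts~\cite[Theorem~4]{MR4361906}, used as a black box (in the form of \cite[Theorem~3.8]{MR4523249}), I would not re-derive it in full. I would indicate how the proof goes, via \emph{randomized rounding onto a fine lattice}.

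\textbf{Rounding step.} Set the mesh $h:=\beta/(\alpha\sqrt n)$ and the lattice $\Lambda:=h\bbZ^n$. Fix a minimal $\beta$-net $\{z_1,\dots,z_N\}$ of $S$ with $N=N(S,\beta B_2^n)$. For $x\in S$, define a random point $y\in\Lambda$ coordinatewise and independently: $y_k$ is one of the two lattice values adjacent to $x_k$, with probabilities chosen so that $\mathbf E y_k=x_k$. Then $\mathbf E\, M(x-y)=0$ and
\[
\mathbf E\|M(x-y)\|_2^2=\sum_k \|M^{(k)}\|_2^2\operatorname{Var}(y_k)\le \frac{h^2}{4}\|M\|_{\HS}^2 .
\]
By Markov's inequality, with probability at least $1/2$,
\[
\|M(x-y)\|_2\le \frac{\beta}{\alpha\sqrt n}\|M\|_{\HS}\cdot\frac{1}{\sqrt2}\le\frac{2\beta}{\alpha\sqrt n}\|M\|_{\HS}.
\]
Also $\|y-x\|_2\le h\sqrt n=\beta/\alpha$, so $y\in S+(4\beta/\alpha)B_2^n$.

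\textbf{Defining the net.} I would take $\mathcal N$ to consist of the lattice points that arise, with probability at least $1/2$, as \emph{typical} roundings of some $x$ lying within $\beta$ of some $z_i$. The crude set of all lattice points in $z_i+(2\beta/\alpha)B_2^n$ has size $\exp(O(n\log(1/\alpha)))$, which is too large. The refinement is to compare $y$ with the deterministic rounding $\lfloor z_i\rceil_\Lambda$ of the center. Since $\|x-z_i\|_2\le\beta=\alpha h\sqrt n$, at most about $\alpha^2 n$ coordinates of $x-z_i$ exceed a constant multiple of $h$. On every other coordinate, $y_k$ differs from $\lfloor z_i\rceil_\Lambda$ by at most one lattice step, and this is a genuine step only when $x_k$ lies within a small distance of a lattice midpoint.

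I would add a second typicality event, also of probability at least $1/2$ after tuning the constants: the number of coordinates where the rounding deviates from $\lfloor z_i\rceil_\Lambda$ is at most $\alpha^{c}n$. Then some $y$ satisfies both the Markov bound and typicality. Only the typical $y$ need to be placed in $\mathcal N$.

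\textbf{Counting and the main obstacle.} Counting integer vectors with at most $\alpha^c n$ nonzero entries whose $\ell_2$-mass is bounded gives
\[
|\mathcal N|\le N\binom{n}{\alpha^c n}\,O(1/\alpha)^{\alpha^c n}\le N\exp\bigl(C\alpha^{0.08}\log(1/\alpha)n\bigr),
\]
for a suitable $c$. The condition $n\ge1/\alpha^2$ guarantees that these rounding counts are nontrivial. The condition $\beta<\alpha/10$ keeps the enlargement $4\beta/\alpha$ below $1/2$.

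The main obstacle is exactly this entropy count. One must show that the typicality event and the Markov event can hold simultaneously, and that the number of atypical coordinates is small enough to produce the factor $\alpha^{0.08}$. Here I would follow Livshyts' argument: randomly shift the lattice and average over the shift, so that few coordinates of $x$ sit near lattice midpoints. The exponent $0.08$ comes from optimizing the shift parameter against the mesh.
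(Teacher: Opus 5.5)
Like the paper, you ultimately rely on the citation. The paper gives no argument for this lemma; it imports it from Livshyts via \cite[Theorem~3.8]{MR4523249}. As a proof by citation, your proposal therefore matches.

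\textbf{The sketch fails at its central step.} You round $x$ itself onto the fixed lattice $h\bbZ^n$. You then claim the rounding deviates from the nearest-point rounding of $z_i$ only where $x_k$ is near a lattice midpoint, hence on at most $\alpha^c n$ coordinates. This is backwards. If $x_k/h$ has fractional part $f_k$, unbiased rounding goes up or down with probabilities $f_k$ and $1-f_k$. It therefore disagrees with the deterministic rounding of $z_{i,k}\approx x_k$ with probability about $\min\{f_k,1-f_k\}$. That is small only when $x_k$ is near a lattice point, and it is largest (about $1/2$) at midpoints. For generic $x$ the expected number of deviations is of order $n$. Averaging over a random shift makes the fractional parts uniform and gives about $n/4$ deviations, never $\alpha^c n$. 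So your typicality event does not have probability $\ge1/2$, and the entropy count collapses. Separately, two events of probability $\ge1/2$ each need not intersect, so you need slack in one of the Markov bounds.

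\textbf{The fix is to center the lattice at the net point.} Round $t:=(x-z_i)/h$ unbiasedly and coordinatewise to $k\in\bbZ^n$, and set $y:=z_i+hk$. The variance identity and $\|x-y\|_2\le h\sqrt n=\beta/\alpha$ are unchanged. Now $\|t\|_2\le\beta/h=\alpha\sqrt n$, so:
\begin{itemize}
\item at most $\alpha^2 n$ coordinates have $|t_k|\ge1$;
\item a coordinate with $|t_k|<1$ has $k_k\ne0$ only with probability $|t_k|$.
\end{itemize}
Hence $\mathbf E|\supp(k)|\le\alpha^2n+\sqrt n\,\|t\|_2\le2\alpha n$.

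Apply Markov's inequality at level $1/4$ to both $|\supp(k)|$ and $\|M(x-y)\|_2^2$. With probability at least $1/2$ you then get both of the following:
\begin{itemize}
\item $|\supp(k)|\le8\alpha n$, and hence $\|k\|_2^2\le2\|t\|_2^2+2|\supp(k)|\le18\alpha n$;
\item $\|M(x-y)\|_2\le h\|M\|_{\HS}=\frac{\beta}{\alpha\sqrt n}\|M\|_{\HS}$.
\end{itemize}
Let $\mathcal N$ consist of all such points $z_i+hk$ that lie within $\beta/\alpha$ of $S$. Counting supports and integer points in a ball gives $|\mathcal N|\le N(S,\beta B_2^n)\exp(C\alpha\log(e/\alpha)n)$. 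This is at most $N(S,\beta B_2^n)\exp(C'\alpha^{0.08}\log(1/\alpha)n)$ for $\alpha\in(0,1/2)$. No shift optimization is needed, and the exponent $0.08$ plays no role in the form stated here.
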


The net in Lemma~\ref{lem:rr} is chosen independently of $M$. This allows us to apply the bound to each realization of $B$ and then take a union bound over a deterministic set.

\begin{lemma}\label{lem:compressible}
For the parameters $\delta,\rho$ fixed above, there exists $c_{\ref{lem:compressible}}>0$, depending only on $\lambda$, such that, for every fixed $A\in \mathcal{M}_{n,d}$,
\[
        \mathbf P_\Xi\left\{\inf_{x\in\Comp(\delta,\rho)}
        \|(A\circ\Xi)x\|_2\le c_{\ref{lem:compressible}}\sqrt n\right\}
        \le e^{-c_{\ref{lem:compressible}}n}.
\]
The same bound holds with $A\circ\Xi$ replaced by $(A\circ\Xi)^T$.
\end{lemma}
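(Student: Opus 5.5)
The plan is the standard net argument for compressible vectors, combining the fixed-vector bound of Lemma~\ref{lem:fixed-vector} with a union bound over a net of the sparse vectors. To control the approximation error, I will either intersect with an event on which the operator norm is small or use the random rounding net of Lemma~\ref{lem:rr}, which only needs the Hilbert--Schmidt norm. The HS route is cleaner here. Since every entry of $B=A\circ\Xi$ is $0$ or $\pm1$ and there are exactly $nd$ nonzero entries, we have deterministically $\|B\|_{\HS}=\sqrt{nd}\le n$. So no probabilistic norm bound is needed.

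First, set $S:=\operatorname{Sparse}(\delta)\cap\mathbb S^{n-1}$. By the standard volumetric bound, $N(S,\beta B_2^n)\le\binom{n}{\delta n}(3/\beta)^{\delta n}\le\exp\bigl(\delta n\log(3e/(\delta\beta))\bigr)$. Apply Lemma~\ref{lem:rr} with a fixed small $\alpha$ and with $\beta=\beta(\alpha)$ chosen so that $2\beta/\alpha\le c_{\ref{lem:fixed-vector}}/4$. This produces a deterministic net $\mathcal N\subset S+(4\beta/\alpha)B_2^n$ of cardinality
\[
|\mathcal N|\le\exp\bigl(\delta n\log(3e/(\delta\beta))+C_{\ref{lem:rr}}\alpha^{0.08}\log(1/\alpha)n\bigr).
\]
Net points $y$ satisfy $\|y\|_2\ge1-4\beta/\alpha\ge1/2$. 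Applying Lemma~\ref{lem:fixed-vector} to $y/\|y\|_2$ and taking a union bound, with probability at least $1-|\mathcal N|e^{-c_{\ref{lem:fixed-vector}}n}$ every $y\in\mathcal N$ satisfies $\|By\|_2\ge c_{\ref{lem:fixed-vector}}\sqrt n/2$.

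Here the parameter choices must be made in the right order, which is the main obstacle: $\alpha$ first, so that $C_{\ref{lem:rr}}\alpha^{0.08}\log(1/\alpha)\le c_{\ref{lem:fixed-vector}}/4$; then $\beta$; then $\delta$ small enough that $\delta\log(3e/(\delta\beta))\le c_{\ref{lem:fixed-vector}}/4$. This is consistent with "$\delta,\rho$ sufficiently small in terms of $\lambda$", since $c_{\ref{lem:fixed-vector}}$, $\alpha$ and $\beta$ depend only on $\lambda$. The net then has cardinality at most $e^{c_{\ref{lem:fixed-vector}}n/2}$, so the union bound leaves probability $e^{-c_{\ref{lem:fixed-vector}}n/2}$.

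On this event, every sparse unit $x$ has some $y\in\mathcal N$ with
\[
\|B(x-y)\|_2\le\frac{2\beta}{\alpha\sqrt n}\,n\le\frac{c_{\ref{lem:fixed-vector}}}{4}\sqrt n,
\]
so $\|Bx\|_2\ge c_{\ref{lem:fixed-vector}}\sqrt n/4$.

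To pass from sparse to compressible vectors, let $x\in\Comp(\delta,\rho)$ and let $z$ be sparse with $\|x-z\|_2\le\rho$; then $\|z\|_2\ge1-\rho$. The error $\|B(x-z)\|_2$ cannot be bounded through the HS norm, since $x-z$ is not in the net. I will handle this by putting a net on the compressible set itself. The covering number of $\Comp(\delta,\rho)$ at scale $\beta$, for $\rho\le\beta/2$, is at most that of the sparse set at scale $\beta/2$. So I apply Lemma~\ref{lem:rr} directly with $S=\Comp(\delta,\rho)$ and require $\rho\le\beta/2$, which is allowed since $\rho$ may also be taken small in terms of $\lambda$. The same bounds then give the claimed estimate with $c_{\ref{lem:compressible}}=c_{\ref{lem:fixed-vector}}/4$, adjusted downward for the exponent. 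The transpose case is identical, using the second part of Lemma~\ref{lem:fixed-vector} and $\|B^T\|_{\HS}=\|B\|_{\HS}$.
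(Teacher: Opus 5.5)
Your final argument, which applies Livshyts's net (Lemma~\ref{lem:rr}) directly to $\Comp(\delta,\rho)$, bounds its covering number by that of sparse vectors, uses the deterministic bound $\|B\|_{\HS}=\sqrt{nd}\le n$, and finishes with Lemma~\ref{lem:fixed-vector} and a union bound, is correct and is essentially the paper's proof. The only difference is the order of parameter choices: the paper sets $\beta=\delta+2\rho$ and $\alpha=2\beta/c$, so that $\alpha\to0$ as $\delta,\rho\to0$, whereas you fix $\alpha$ and $\beta$ in terms of $\lambda$ first and then shrink $\delta$ and $\rho$; in that version you should also impose the hypothesis $\beta<\alpha/10$ of Lemma~\ref{lem:rr} explicitly.
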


\begin{proof}
We prove the first bound. The bound for $(A\circ\Xi)^T$ follows by applying the same argument to the column supports of $A$.

Set $B:=A\circ\Xi$ and $c=\min\{c_{\ref{lem:fixed-vector}}/4,1/10\}$. Set $\alpha={2(\delta+2\rho)}/{c}$ and $\beta = \delta+2\rho$, where $0<\delta,\rho\ll1$. Assume that $n\ge1/\alpha^2$. Observe that
\begin{align*}
N\left(\operatorname{Comp}(\delta,\rho),\beta B_2^n\right)\le N\left(\operatorname{Sparse}(\delta)\cap\mathbb{S}^{n-1},(\beta-2\rho)B_2^n\right)\le\binom{n}{\delta n}\left(\frac{4}{\delta}\right)^{\delta n}
\le e^{2\delta\log(4/\delta)\,n}.
\end{align*}
Using Lemma \ref{lem:rr}, we get a net $\mathcal{N}\subseteq\frac32B_2^n\backslash\frac12B_2^n$ with
\begin{align*}
|\mathcal{N}| \leq e^{2\delta\log(4/\delta)\,n} \cdot e^{C_{\ref{lem:rr}} \alpha^{0.08} \log(1/\alpha) n}\le e^{C \alpha^{0.08} \log(1/\alpha) n}.
\end{align*}
If $\|Bx\|_2\le c\sqrt n$ for some $x\in\Comp(\delta,\rho)$,
Lemma~\ref{lem:rr} gives $y\in\mathcal N$ such that
\[
        \|B(x-y)\|_2\le\frac{2\beta}{\alpha\sqrt n}\|B\|_{\HS}
        \le c\sqrt n,
\]
where we used $\|B\|_{\HS}=\sqrt{nd}\le n$. Thus, $\|By\|_2\le2c\sqrt n$. For each fixed $y\in\mathcal N$, since $\|y\|_2\ge1/2$ and $4c\le c_{\ref{lem:fixed-vector}}$, Lemma~\ref{lem:fixed-vector} implies that
\[
        \mathbf P_\Xi\{\|By\|_2\le2c \sqrt n\}
        \le\mathbf P_\Xi\{\|B(y/\|y\|_2)\|_2
        \le c_{\ref{lem:fixed-vector}}\sqrt n\}
        \le e^{-c_{\ref{lem:fixed-vector}}n}.
\]
Taking a union bound over $\mathcal N$ proves the first estimate after choosing $\alpha$ sufficiently small. 
\end{proof}

We now recall the least common denominator (LCD), which provides the arithmetic input for the Littlewood--Offord estimates used below. Once the support $A$ has been fixed, the randomness in the $i$-th row is the Rademacher sum $(A\circ\Xi)_i\cdot x$. Small ball probabilities for such sums depend on the additive structure of the coefficient vector $x|_{T_i(A)}$.

The restrictions $x|_{T_i(A)}$ do not need to lie on $\mathbb S^{n-1}$. We shall therefore use the standard LCD estimates of Rudelson and Vershynin \cite{MR2407948} in a rescaled form: whenever $v\in\mathbb R^m$ satisfies $\|v\|_2\ge a$, the corresponding estimates are applied to $v/\|v\|_2$ and then transferred back to $v$. The only effect is that the implicit constants may depend on the fixed parameter $a$.

For $v\in\bbR^m$, $\alpha>0$, and $\gamma\in(0,1)$, define
\[
        \LCD_{\alpha,
\gamma}(v)
        :=\inf\left\{\theta>0:
        \dist(\theta v,\bbZ^m)<\min(\gamma\theta\|v\|_2,\alpha)
        \right\}.
\]

We use the following standard Littlewood--Offord estimate in terms of LCD.
\subsection{LCD estimates and arithmetic nets}

\begin{lemma}\label{lem:lcd-smallball}
For every $a>0$ and $\gamma\in(0,1)$ there are constants $C,c>0$ such that the following holds. Let $\xi_1,\dots,\xi_m$ be independent Rademacher random variables. If $v\in\bbR^m$ satisfies $\|v\|_2\ge a$, then for every $\alpha>0$ and every $\eps\ge0$,
\[
        \mathcal L\left(\sum_{j=1}^m\xi_jv_j,\eps\right)
        :=\sup_{u\in\bbR}
        \mathbf P\left\{\left|\sum_{j=1}^m\xi_jv_j-u\right|\le\eps\right\}
        \le C\eps+\frac{C}{\LCD_{\alpha,
\gamma}(v)}+Ce^{-c\alpha^2}.
\]
\end{lemma}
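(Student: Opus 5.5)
The plan is to reduce to the standard Rudelson--Vershynin small ball estimate for unit vectors via Esseen's inequality, and then rescale. First I normalize. Set $u:=v/\|v\|_2\in\mathbb S^{m-1}$ and $S:=\sum_j\xi_jv_j=\|v\|_2\sum_j\xi_ju_j$. Then
\[
\mathcal L(S,\eps)=\mathcal L\Bigl(\sum_j\xi_ju_j,\eps/\|v\|_2\Bigr)\le\mathcal L\Bigl(\sum_j\xi_ju_j,\eps/a\Bigr).
\]
I also need to compare the two LCDs. If $\theta>0$ satisfies $\dist(\theta v,\bbZ^m)<\min(\gamma\theta\|v\|_2,\alpha)$, then $\theta':=\theta\|v\|_2$ satisfies $\dist(\theta' u,\bbZ^m)<\min(\gamma\theta',\alpha)$. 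Hence
\[
\LCD_{\alpha,\gamma}(u)\le\|v\|_2\LCD_{\alpha,\gamma}(v),
\]
which is the wrong direction if $\|v\|_2$ is large. I will therefore not pass through the normalized LCD at all, and instead run the Esseen argument directly on $v$.

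By Esseen's inequality,
\[
\mathcal L(S,\eps)\le C\int_{-1}^{1}|\phi_S(t/\eps)|\,dt,\qquad \phi_S(s)=\prod_j\cos(sv_j).
\]
Using $|\cos(\pi x)|\le\exp(-c\,\dist(x,\bbZ)^2)$, this gives
\[
\mathcal L(S,\eps)\le C\int_{-1}^{1}\exp\Bigl(-c\,\dist\bigl(\tfrac{t}{\pi\eps}v,\bbZ^m\bigr)^2\Bigr)\,dt.
\]
Changing variables $\theta=t/(\pi\eps)$ turns this into
\[
C\eps\int_{|\theta|\le 1/(\pi\eps)}\exp\bigl(-c\,\dist(\theta v,\bbZ^m)^2\bigr)\,d\theta.
\]
(If $\eps$ is tiny, I first replace $\eps$ by $\max(\eps,1/\LCD)$, using monotonicity of $\mathcal L$ in $\eps$.)

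The key step is to split the $\theta$-range at $D:=\LCD_{\alpha,\gamma}(v)$. For $|\theta|<D$, the definition of LCD gives $\dist(\theta v,\bbZ^m)\ge\min(\gamma|\theta|\|v\|_2,\alpha)$. On $\{\gamma|\theta|\|v\|_2\le\alpha\}$ the integrand is at most $e^{-c\gamma^2a^2\theta^2}$, which is integrable with total mass $O_{a,\gamma}(1)$. On the rest it is at most $e^{-c\alpha^2}$, contributing $\eps\cdot(2/\pi\eps)e^{-c\alpha^2}=Ce^{-c\alpha^2}$. So taking $\eps_0:=\max(\eps,1/D)$, the whole integration range $|\theta|\le1/(\pi\eps_0)\le D$ lies below the LCD. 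This yields
\[
\mathcal L(S,\eps)\le\mathcal L(S,\eps_0)\le C_{a,\gamma}\eps_0+Ce^{-c\alpha^2}\le C\eps+\frac{C}{D}+Ce^{-c\alpha^2}.
\]

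The main obstacle is bookkeeping: I must check that only the lower bound $\|v\|_2\ge a$ is used, which enters through the Gaussian factor $e^{-c\gamma^2a^2\theta^2}$, and that no upper bound on $\|v\|_2$ is needed. Since the rescaling was abandoned in favor of working with $v$ directly, this holds, and the constants depend only on $a$ and $\gamma$. This is exactly \cite[Theorem~4.1]{MR2407948} with $\|v\|_2$ replaced by its lower bound $a$.
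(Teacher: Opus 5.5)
Your argument is correct. Esseen's inequality, the bound $|\cos(\pi x)|\le\exp(-c\,\dist(x,\bbZ)^2)$, the split of the $\theta$-integral at $D=\LCD_{\alpha,\gamma}(v)$, and the replacement $\eps_0=\max(\eps,1/D)$ all check out. You obtain the lemma with $C\asymp 1/(a\gamma)$ in front of $\eps+1/D$. For $\eps=0$ you need $D<\infty$, which holds for $v\neq0$ by Dirichlet's simultaneous approximation theorem.

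Your route differs from the paper's. The paper applies \cite[Theorem~4.1]{MR2407948} to $v/\|v\|_2$ and transfers the bound back, as announced in the paragraph before the lemma.

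Your reason for abandoning that normalization is mistaken. The substitution $\theta'=\theta\|v\|_2$ maps the set defining $\LCD_{\alpha,\gamma}(v)$ bijectively onto the set defining $\LCD_{\alpha,\gamma}(u)$, where $u=v/\|v\|_2$, because $\theta'u=\theta v$ and $\gamma\theta'=\gamma\theta\|v\|_2$. Hence $\LCD_{\alpha,\gamma}(u)=\|v\|_2\,\LCD_{\alpha,\gamma}(v)$ holds with equality, not just the one-sided inequality you derived. The unit-vector estimate then gives
\[
\mathcal L(S,\eps)=\mathcal L\Bigl(\sum_j\xi_ju_j,\frac{\eps}{\|v\|_2}\Bigr)\le\frac{C\eps}{\|v\|_2}+\frac{C}{\|v\|_2\,\LCD_{\alpha,\gamma}(v)}+Ce^{-c\alpha^2},
\]
which is the lemma with constant $C/a$.

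Both proofs rest on the same Esseen-plus-LCD mechanism. The paper's proof is a one-line reduction to the literature, using the exact scale covariance of this LCD: its threshold $\gamma\theta\|v\|_2=\gamma\|\theta v\|_2$ depends only on $\theta v$. Yours reproves the cited theorem from scratch. It is self-contained and shows that only the lower bound $\|v\|_2\ge a$ enters, through the Gaussian factor $e^{-c\gamma^2a^2\theta^2}$. It also does not depend on which form of the cited theorem one uses: your step $\eps\mapsto\max(\eps,1/D)$ converts between the ``$\eps\ge1/\LCD$'' and ``$\eps+1/\LCD$'' versions.
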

\begin{proof}
This follows from \cite[Theorem 4.1]{MR2407948}.
\end{proof}
The following stability estimate is a direct consequence of the definition of LCD. It is the standard LCD analogue of the CLCD stability lemma of Tran \cite[Lemma 2.14]{tran2020smallestsingularvaluerandom}; see also Jain, Sah, and Sawhney \cite[Lemma 2.12]{MR4523249} for the version used in the random regular digraph setting.

\begin{lemma}\label{lem:lcd-stability}
Let $v,w\in\mathbb R^N$, let $\alpha>0$, and let $\gamma\in(0,1)$. Suppose that $\|v-w\|_2< {\gamma}\|v\|_2/{5}$. 
Then
\[
        \LCD_{\alpha/2,\gamma/2}(w)
        \ge
        \min\left\{
        \LCD_{\alpha,\gamma}(v),
        \frac{\alpha}{4\|v-w\|_2}
        \right\}.
\]
\end{lemma}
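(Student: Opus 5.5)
We argue directly from the definition of $\LCD_{\alpha,\gamma}$. Set $\varepsilon_0:=\|v-w\|_2$ and let $\theta>0$ be any level that witnesses smallness for $w$, i.e.
\[
\dist(\theta w,\bbZ^N)<\min\bigl(\tfrac{\gamma}{2}\theta\|w\|_2,\tfrac{\alpha}{2}\bigr).
\]
It suffices to show that every such $\theta$ satisfies $\theta\ge\min\{\LCD_{\alpha,\gamma}(v),\alpha/(4\varepsilon_0)\}$; taking the infimum over $\theta$ then gives the claim. We may therefore assume $\theta<\alpha/(4\varepsilon_0)$ and aim to prove that $\theta$ also witnesses smallness for $v$ at parameters $(\alpha,\gamma)$. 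That yields $\theta\ge\LCD_{\alpha,\gamma}(v)$.

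The key step is the triangle inequality:
\[
\dist(\theta v,\bbZ^N)\le\dist(\theta w,\bbZ^N)+\theta\varepsilon_0.
\]
Since $\theta\varepsilon_0<\alpha/4$, the right-hand side is less than $\alpha/2+\alpha/4<\alpha$, which handles the $\alpha$ part of the minimum. For the $\gamma$ part, the hypothesis $\varepsilon_0<\gamma\|v\|_2/5$ gives
\[
\|w\|_2\le\|v\|_2+\varepsilon_0\le(1+\gamma/5)\|v\|_2.
\]
Hence
\[
\tfrac{\gamma}{2}\theta\|w\|_2+\theta\varepsilon_0\le\theta\|v\|_2\Bigl(\tfrac{\gamma}{2}(1+\tfrac{\gamma}{5})+\tfrac{\gamma}{5}\Bigr)<\gamma\theta\|v\|_2,
\]
where the last inequality uses $\gamma<1$: the bracket is at most $\gamma(\tfrac12+\tfrac1{10}+\tfrac15)=\tfrac{4}{5}\gamma<\gamma$. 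Combining the two parts,
\[
\dist(\theta v,\bbZ^N)<\min(\gamma\theta\|v\|_2,\alpha),
\]
so $\theta$ is admissible in the infimum defining $\LCD_{\alpha,\gamma}(v)$.

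The only potential subtlety is degenerate cases. If $v=0$, the hypothesis $\varepsilon_0<0$ is impossible. If $\varepsilon_0=0$, the second term of the minimum is read as $+\infty$, and the argument above goes through unchanged. If no $\theta$ witnesses smallness for $w$, then $\LCD(w)=+\infty$ and there is nothing to prove. So the proof is essentially a one-line perturbation estimate, and there is no real obstacle beyond checking these constants.
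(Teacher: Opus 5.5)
Your proposal is correct and is essentially the paper's argument: a witness $\theta<\alpha/(4\|v-w\|_2)$ for $w$ at parameters $(\alpha/2,\gamma/2)$ becomes a witness for $v$ at parameters $(\alpha,\gamma)$, using $\dist(\theta v,\bbZ^N)\le\dist(\theta w,\bbZ^N)+\theta\|v-w\|_2$ and $\|w\|_2\le\|v\|_2+\|v-w\|_2$. The paper phrases this as a proof by contradiction, while you argue directly over all witnesses and handle the degenerate cases (such as $v=w$) a bit more explicitly.
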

\begin{proof}
Let $r:=\|v-w\|_2$. There is nothing to prove if $r=0$, so assume $r>0$. Suppose that
\[
        \LCD_{\alpha/2,\gamma/2}(w)
        <
        \min\left\{\LCD_{\alpha,\gamma}(v),\frac{\alpha}{4r}\right\}.
\]
Then there exists $0<\theta<\min\left\{\LCD_{\alpha,\gamma}(v), {\alpha}/{4r}\right\}$ such that $\dist(\theta w,\mathbb Z^N)<\min\left\{ \frac{\gamma}{2}\theta\|w\|_2, \frac{\alpha}{2}\right\}$.  Choose $p\in\mathbb Z^N$ satisfying $\|\theta w-p\|_2 < \min\left\{ {\gamma}\theta\|w\|_2/2, {\alpha}/{2}\right\}$.
Then
\[
        \|\theta v-p\|_2
        \le
        \|\theta w-p\|_2+\theta\|v-w\|_2
        <
        \min\left\{\frac{\gamma}{2}\theta\|w\|_2,\frac{\alpha}{2}\right\}
        +\theta r.
\]

Since $ {\alpha}/{2}+\theta r <  {\alpha}/{2}+ {\alpha}/{4} =  {3\alpha}/{4} < \alpha$
and $ {\gamma} \theta\|w\|_2/2+\theta r\le {\gamma}\theta\|v\|_2/2
+\left(1+ {\gamma}/{2}\right)\theta r \le\gamma\theta\|v\|_2$, we have
 $\dist(\theta v,\mathbb Z^N)
        \le
        \|\theta v-p\|_2
        <
        \min\{\gamma\theta\|v\|_2,\alpha\}$. 
Hence $\LCD_{\alpha,\gamma}(v)\le \theta<\LCD_{\alpha,\gamma}(v)$, which contradicts the definition of $\LCD_{\alpha,\gamma}(v)$.
\end{proof}

The argument also requires a refined covering estimate for vectors with bounded LCD, playing the role of a volumetric net bound for these arithmetic level sets. 
\begin{lemma}\label{lem:lcd-net}
Fix $a,\theta,\gamma,c_0>0$. There exists a constant
$C_{\ref{lem:lcd-net}}>0$, depending only on
$a,\theta,\gamma,c_0$, such that the following holds. Let $\theta n\le m\le n$,
let $D\ge c_0\sqrt n$, and let $0<\mu<1$. Set $\alpha:=\mu\sqrt n$ and define
\[
        \mathcal L_{m,D}:=
        \left\{
        v\in B_2^m:
        a\le\|v\|_2\le 1,\ 
        D\le\LCD_{\mu\sqrt n,\gamma}(v)\le 2D
        \right\}.
\]
Then $\mathcal L_{m,D}$ admits a Euclidean $({4\mu\sqrt n}/{D})$-net of cardinality at most $\mu^{-1}D\left({C_{\ref{lem:lcd-net}}D}/{\sqrt n}\right)^m$.
\end{lemma}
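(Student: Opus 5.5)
The plan is to follow the standard Rudelson--Vershynin argument for level sets of the LCD: attach to each vector in $\mathcal L_{m,D}$ a nearby rescaled integer point, and count those integer points with a volumetric bound.

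\emph{Step 1: a rescaled integer point near each $v$.} Fix $v\in\mathcal L_{m,D}$. By definition of $\LCD$ and since the defining set of $\theta$'s is open from above in the relevant sense, there is $\theta_v\in[D,2D+1]$ (taking $\theta_v$ slightly above the infimum, say $\theta_v\le 3D$) and $p\in\bbZ^m$ such that
\[
\|\theta_v v-p\|_2<\min(\gamma\theta_v\|v\|_2,\mu\sqrt n)\le\mu\sqrt n .
\]
Dividing by $\theta_v$ gives $\|v-p/\theta_v\|_2<\mu\sqrt n/\theta_v\le\mu\sqrt n/D$. Moreover $\|p\|_2\le\theta_v+\mu\sqrt n\le 3D+\sqrt n\le C D$, using $D\ge c_0\sqrt n$.

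\emph{Step 2: discretize the scale.} The scale $\theta_v$ ranges over an interval of length $O(D)$. Discretize it by a grid $\Theta\subset[D,3D]$ of mesh $\mu$, so that $|\Theta|\lesssim D/\mu$. Replacing $\theta_v$ by a grid point $t$ with $|t-\theta_v|\le\mu$ changes $p/\theta_v$ into $p/t$ at a cost of
\[
\|p\|_2\,|1/\theta_v-1/t|\le CD\cdot\mu/D^2=C\mu/D\le\mu\sqrt n/D .
\]
This step is where $\sqrt n\ge1$ is used.

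\emph{Step 3: count integer points and assemble the net.} Consider
\[
\mathcal N_0:=\{p/t:\ t\in\Theta,\ p\in\bbZ^m\cap CD\,B_2^m\}.
\]
Every $v\in\mathcal L_{m,D}$ is within $2\mu\sqrt n/D$ of $\mathcal N_0$. The standard volumetric count gives
\[
|\bbZ^m\cap R B_2^m|\le\Bigl(1+\frac{C'R}{\sqrt m}\Bigr)^m\le\Bigl(\frac{C''D}{\sqrt n}\Bigr)^m,
\]
using $m\ge\theta n$ and $R=CD\gtrsim\sqrt n$. Hence $|\mathcal N_0|\le C\mu^{-1}D(CD/\sqrt n)^m$. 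The stated net must lie in $\mathcal L_{m,D}$ (or at least be a net for it). To arrange this, keep only the points of $\mathcal N_0$ within $2\mu\sqrt n/D$ of $\mathcal L_{m,D}$, and replace each by a point of $\mathcal L_{m,D}$ within that distance. This yields a $(4\mu\sqrt n/D)$-net inside $\mathcal L_{m,D}$ with no increase in cardinality. The constant in front of $\mu^{-1}D$ is absorbed into $C_{\ref{lem:lcd-net}}$ raised to the power $m\ge1$.

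\emph{Main obstacle.} The delicate step is the bookkeeping in Step 2. One has to check that the scale discretization error $\|p\|_2\mu/D^2$ stays below $\mu\sqrt n/D$, and that the lattice-point count yields $(CD/\sqrt n)^m$ rather than $(CD/\sqrt m)^m$. Both work because $m\asymp_\theta n$ and $D\gtrsim\sqrt n$. The lower bound $\|v\|_2\ge a$ is not essential here; it only guarantees that $\LCD$ is finite and comparable across the rescaling.
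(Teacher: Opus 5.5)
Your argument is correct, and it takes a different route from the paper. The paper writes out no proof. It only remarks that the lemma is an easy consequence of the unit-norm level-set lemma of Rudelson--Vershynin.

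The natural way to carry out that reduction uses the scaling identity $\LCD_{\alpha,\gamma}(v)=\LCD_{\alpha,\gamma}(v/\|v\|_2)/\|v\|_2$:
\begin{itemize}
\item the direction $v/\|v\|_2$ then has $\LCD$ in $[aD,2D]$, so it lies in one of $O(\log(2/a))$ dyadic unit-sphere level sets, which the cited lemma nets;
\item that lemma removes the unknown scale by the normalization $p/\theta\mapsto p/\|p\|_2$, which is available only when $\|v\|_2=1$;
\item the norm $\|v\|_2\in[a,1]$ is discretized separately, and a final refinement brings the radius down to $4\mu\sqrt n/D$.
\end{itemize}
This is where the hypothesis $\|v\|_2\ge a$ and the $a$-dependence of the constant would enter.

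You instead rerun the lattice-point argument directly and replace the normalization by a grid over the scale $\theta\in[D,3D]$. This is self-contained and explains the prefactor $\mu^{-1}D$ in the statement. It also shows that the constant needs no dependence on $a$ or $\gamma$. The price is that you must do the lattice-point count yourself rather than cite it.

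One bookkeeping step needs correcting. In Step 2 you have $\|p\|_2\le CD$ with $C=3+1/c_0>1$. The inequality $C\mu/D\le\mu\sqrt n/D$ then requires $\sqrt n\ge C$, not merely $\sqrt n\ge1$ as you state. The fix is to take the mesh of $\Theta$ to be $\mu/C$, or even $\mu\sqrt n/C$. This multiplies $|\Theta|$ by at most a constant, which is absorbed into the constant raised to the power $m\ge1$ exactly as in your final step. With that change, Steps 1--3 and the final replacement of net points by points of $\mathcal L_{m,D}$ give the stated $(4\mu\sqrt n/D)$-net.
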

\begin{remark}
The version in \cite[Lemma 5.6]{MR2407948} is stated only for $\|v\|_2 = 1$, but the statement above is an easy consequence. The key point in the lemma is that the constant $C_{\ref{lem:lcd-net}}$ is independent of $\mu$.   
\end{remark}
The preceding level set estimate also yields a sublevel version whenever a uniform lower bound of order $\sqrt n$ is available.

\begin{corollary}\label{cor:lcd-sublevel-net}
Fix $a,\theta,\gamma,c_0>0$. There exists a constant
$C_{\ref{cor:lcd-sublevel-net}}>0$, depending only on
$a,\theta,\gamma,c_0$, such that the following holds. Let $\theta n\le m\le n$, $D\ge c_0\sqrt n$, $0<\mu<1$, and set $\alpha:=\mu\sqrt n$. Define
\[
        \mathcal S_{m, D}:=
        \left\{
        v\in B_2^m:
        a\le\|v\|_2\le1,\ 
        c_0\sqrt n\le
        \LCD_{\alpha,\gamma}(v)
        \le2D
        \right\}.
\]
Then $\mathcal S_{m, D}$ admits a Euclidean $({4\mu\sqrt n}/{D})$-net of cardinality at most 
$\mu^{-1}D\left( {C_{\ref{cor:lcd-sublevel-net}}D}/{\sqrt n} \right)^m$.
\end{corollary}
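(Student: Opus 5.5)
The plan is a dyadic decomposition of the LCD range, followed by a union of the nets from Lemma~\ref{lem:lcd-net}. Since $\LCD_{\alpha,\gamma}(v)\ge c_0\sqrt n$ on $\mathcal S_{m,D}$, we have
\[
\mathcal S_{m,D}\subset\bigcup_{i=0}^{L}\mathcal L_{m,D_i},\qquad D_i:=2^{-i}D,
\]
where $L$ is the least integer with $2^{-L}D< c_0\sqrt n$, that is $L\approx\log_2(D/(c_0\sqrt n))$. We use only the indices with $D_i\ge c_0\sqrt n$, together with one extra level $D_L'$ taken to be $c_0\sqrt n$. For this last level the window $[D_L',2D_L']$ still covers the remaining range $[c_0\sqrt n,\,2D_L]$; if necessary we enlarge it to a window of ratio $4$ by splitting it into two windows of ratio $2$. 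Each $v$ with $c_0\sqrt n\le\LCD(v)\le 2D$ then lies in some level set $\mathcal L_{m,D'}$ with $c_0\sqrt n\le D'\le D$.

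Next, apply Lemma~\ref{lem:lcd-net} to each level $D'=D_i$. It gives a $(4\mu\sqrt n/D_i)$-net of $\mathcal L_{m,D_i}$ of size at most $\mu^{-1}D_i(C D_i/\sqrt n)^m$. The net radius $4\mu\sqrt n/D_i$ is larger than the desired $4\mu\sqrt n/D$, since $D_i\le D$. This mismatch is the only real obstacle. To handle it, refine each such net: cover each ball of radius $4\mu\sqrt n/D_i$ in $\mathbb R^m$ by balls of radius $4\mu\sqrt n/D$. The standard volumetric bound requires at most $(3D/D_i)^m$ small balls per large ball, so the refined net of level $i$ has cardinality at most
\[
\mu^{-1}D_i\Bigl(\frac{C D_i}{\sqrt n}\Bigr)^m\Bigl(\frac{3D}{D_i}\Bigr)^m
=\mu^{-1}D_i\Bigl(\frac{3C D}{\sqrt n}\Bigr)^m .
\]
This is exactly the target shape with constant $3C$, and the prefactor $D_i=2^{-i}D$ decays geometrically in $i$. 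If one wants the net points inside $\mathcal S_{m,D}$, replace each refined center by a nearby point of the set; this doubles the radius, which we absorb by starting from radius $2\mu\sqrt n/D$, at the cost of a factor $6^m$ instead of $3^m$.

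Summing over $i$ gives a $(4\mu\sqrt n/D)$-net of $\mathcal S_{m,D}$ of total cardinality at most
\[
\mu^{-1}\Bigl(\frac{6C D}{\sqrt n}\Bigr)^m\sum_{i\ge0}2^{-i}D+\text{(boundary level)}\le 3\mu^{-1}D\Bigl(\frac{6C D}{\sqrt n}\Bigr)^m .
\]
The boundary level has $D'\le D$ and is handled the same way. The factor $3$ is absorbed into the constant raised to the power $m\ge\theta n\ge1$. Hence $C_{\ref{cor:lcd-sublevel-net}}:=18\,C_{\ref{lem:lcd-net}}$ works, and it depends only on $a,\theta,\gamma,c_0$ and not on $\mu$, because the constant in Lemma~\ref{lem:lcd-net} does not depend on $\mu$.
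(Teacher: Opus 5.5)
Your proposal is correct and follows essentially the same route as the paper: decompose the LCD range dyadically into level sets, apply Lemma~\ref{lem:lcd-net} on each level, refine each level net to the common radius $4\mu\sqrt n/D$ at the volumetric cost $(3D/H)^m$, and sum the geometrically decaying prefactors. The differences are cosmetic: you index the levels downward from $D$ and treat the bottom level $c_0\sqrt n$ separately, whereas the paper indexes upward from $c_0\sqrt n$ to $2D$ and absorbs the levels $H\ge D$ into the constant.
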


\begin{proof}
Let $H$ range over the dyadic values $c_0\sqrt n,\ 2c_0\sqrt n,\ 4c_0\sqrt n,\dots$ up to $2D$. For each such $H$, Lemma~\ref{lem:lcd-net} gives a $4\alpha/H$-net for the level sets
\[
        \mathcal L_{m, H}:=\left\{
        v\in B_2^m:
        a\le\|v\|_2\le 1,
        H\le\LCD_{\alpha,\gamma}(v)\le2H
        \right\}
\]
of cardinality at most $\mu^{-1}H \left({C_{\ref{lem:lcd-net}}H}/{\sqrt n}\right)^m$. 

If $H<D$, each ball of radius $4\alpha/H$ admits a $4\alpha/D$-net of cardinality at most $\left({3D}/{H}\right)^m$. Thus, $\mathcal L_{m, H}$ admits a $4\alpha/D$-net of cardinality at most $\mu^{-1}H\left({CD}/{\sqrt n}\right)^m$. 
For $H\ge D$, the original net already has radius at most $4\alpha/D$, and
the same bound holds after changing the constant in the base.

Summing over the dyadic values of $H$ and using $\sum_{H\le 2D}H\le 4D$, we obtain a Euclidean $4\alpha/D$-net for $\mathcal S_{m, D}$ of cardinality at most $\mu^{-1}D\left( {C_{\ref{cor:lcd-sublevel-net}}D}/{\sqrt n} \right)^m$.
\end{proof}

\subsection{QLCD and nets for incompressible vectors}

Recall the parameters $b_1$, $b_2$, $b_3$, $\eta$, $Q$ and the event $\mathcal G$ fixed at the beginning of this section. For the arguments below, fix a support
$A\in\mathcal G$. Set $B=A\circ\Xi$, set $T_i:=T_i(A)$ and
$\mathcal T=\{\{T_1,\dots,T_n\}\}$. 
The QLCD introduced below will be defined relative to this deterministic family of supports. This is
the ordinary LCD version of the quantile construction in
\cite[Definition~3.3 and the following remark]{MR4523249}.

\begin{definition}[QLCD]\label{def:qlcd}
Let $x\in\mathbb{R}^n$. Given $n$ sets of coordinates $\mathcal T=\{\{T_1,\dots,T_n\}\}$ and $\ell\in[n]$, we define the quantile $\LCD$
or $\QLCD^{\mathcal T}_{\ell,\alpha,\gamma}(x)$ to be the $\ell$-th smallest element in the multiset
\[
        \{\{\LCD_{\alpha,\gamma}(x|_{T_i}):i=1,\dots,n\}\}.
\]
\end{definition}
Throughout the proof, we take $\ell=Q$, $\alpha=\mu\sqrt n$.
The following lemma shows that incompressible vectors have sufficiently large QLCD on a well-spread support family.

\begin{lemma}\label{lem:qlcd-lower}
There exist constants $a_{\ref{lem:qlcd-lower}}, c_{\ref{lem:qlcd-lower}}>0$,
depending only on $\lambda,\delta,\rho$, such that the
following holds. If $\gamma>0$ is sufficiently small, then for every
$\mu>0$ and every $x\in\Incomp(\delta,\rho)$, all but fewer than $Q$
sets $T_i\in\mathcal T$ satisfy
\[
        \|x|_{T_i}\|_2
        \ge
        a_{\ref{lem:qlcd-lower}}.
\]
Moreover,
\[
        \QLCD^{\mathcal T}_{Q,\mu\sqrt n,\gamma}(x)
        \ge
        c_{\ref{lem:qlcd-lower}}\sqrt n .
\]
\end{lemma}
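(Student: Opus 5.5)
Both assertions come from the covering property (iii) of the well-spread row family $\mathcal T$ (here $A\in\mathcal G$, so $\mathcal T$ is $(Q,\eta,\lambda)$-well-spread), combined with Lemma~\ref{lem:spread}. I will prove the norm statement first and then derive the QLCD bound from it.

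\textbf{Step 1: norm bound.} Let $\sigma=\operatorname{Spread}(x)$, so that $|\sigma|\ge b_3 n$. On $\sigma$ every coordinate satisfies $|x_k|\ge b_1/\sqrt n$. Suppose $i_1,\dots,i_Q$ are distinct indices. By (iii), the sets $T_{i_1},\dots,T_{i_Q}$ together miss at most $\eta n\le b_3 n/4$ coordinates. Hence their union contains at least $(3/4)b_3 n$ points of $\sigma$, and some single $T_{i_r}$ contains at least $3b_3n/(4Q)$ of them. For that set,
\[
\|x|_{T_{i_r}}\|_2^2\ge \frac{3b_3n}{4Q}\cdot\frac{b_1^2}{n}=\frac{3b_1^2b_3}{4Q}=:a^2 .
\]
So no $Q$ distinct indices can all satisfy $\|x|_{T_i}\|_2<a$. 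This means fewer than $Q$ sets fail the bound, and we set $a_{\ref{lem:qlcd-lower}}:=a$.

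\textbf{Step 2: QLCD bound.} It suffices to show that $\LCD_{\mu\sqrt n,\gamma}(x|_{T_i})\ge c\sqrt n$ for every $i$ with $\|x|_{T_i}\|_2\ge a$. Indeed, at most $Q-1$ indices fail the norm bound, so the $Q$-th smallest LCD is then at least $c\sqrt n$.

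Fix such an $i$, write $v=x|_{T_i}$, and suppose $0<\theta<c\sqrt n$. I will show that $\dist(\theta v,\bbZ^{T_i})\ge\gamma\theta\|v\|_2$, so that such $\theta$ never enters the infimum defining the LCD. This works for every $\mu$, because it only uses the $\gamma\theta\|v\|_2$ branch of the minimum.

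The argument follows the standard Rudelson--Vershynin lower bound $\LCD\gtrsim\sqrt n$ for spread vectors. Split into two cases according to $\theta$.

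\emph{Case $\theta<\sqrt{\delta n}/2$.} Every coordinate has $|x_k|\le 1/\sqrt{\delta n}$ on $\sigma$. On $\sigma\cap T_i$ this gives $|\theta x_k|<1/2$, so the nearest integer to $\theta x_k$ is $0$. Therefore
\[
\dist(\theta v,\bbZ^{T_i})\ge\theta\,\|x|_{\sigma\cap T_i}\|_2 .
\]
The key extra input is a lower bound on $\|x|_{\sigma\cap T_i}\|_2$. I would get it by refining Step 1: apply the covering argument to $\sigma$ itself, so that all but fewer than $Q$ indices satisfy $|\sigma\cap T_i|\ge c'n$ and hence $\|x|_{\sigma\cap T_i}\|_2\ge a'$. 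The exceptional indices are absorbed by the same "fewer than $Q$" count, after excluding a combined set of at most $2Q-2$ indices. To keep exactly "fewer than $Q$", I would instead define the exceptional set directly through $|\sigma\cap T_i|$, and note that this also yields the norm statement of Step 1. For the remaining $i$ we get $\dist(\theta v,\bbZ^{T_i})\ge\theta a'\ge\gamma\theta\|v\|_2$ as soon as $\gamma\le a'$, since $\|v\|_2\le1$.

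\emph{Case $\sqrt{\delta n}/2\le\theta<c\sqrt n$.} Choose $c\le b_1/4$, so that $|\theta x_k|<1/4$ for all $k$ with $|x_k|$ below order $1/\sqrt n$ in the lower range. Rather than handling this directly, I would use the restriction to $\sigma\cap T_i$ and the known fact for spread vectors: if $w\in\bbR^{\sigma\cap T_i}$ has all coordinates of magnitude between $b_1/\sqrt n$ and $b_2/\sqrt n$ and $|\supp w|\ge c'n$, then $\dist(\theta w,\bbZ)\ge c''\sqrt n\cdot\theta/\sqrt n$ for $\theta\le c\sqrt n$. This is Lemma~3.6 of \cite{MR2407948} applied to $w/\|w\|_2$. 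Then take $\gamma$ small in terms of $c''$ and $a$.

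\textbf{Main obstacle.} The main obstacle is the bookkeeping in Step 2. The spread-coordinate lower bound on $\sigma\cap T_i$ must hold for all but fewer than $Q$ indices simultaneously with the norm bound. I plan to resolve this by defining a single exceptional set $\{i:|\sigma\cap T_i|<3b_3n/(4Q)\}$ and showing via (iii) that it has fewer than $Q$ elements. This needs a little care, because (iii) gives one good set among any $Q$, not a per-set bound. Equivalently: any $Q$ exceptional indices would cover at most $3b_3n/4$ points of $\sigma$, and hence miss at least $b_3n/4\ge\eta n$ coordinates, which is a contradiction. If equality cases cause trouble, I would tighten the threshold to $b_3n/(2Q)$.
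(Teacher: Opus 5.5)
Your proposal is correct and, once you define the exceptional set through $|\sigma\cap T_i|$ as you settle on at the end, it is essentially the paper's proof. Condition (iii) applied to $\operatorname{Spread}(x)$ leaves fewer than $Q$ indices with $|T_i\cap\sigma|$ small, and for the remaining $i$ the nearest integer to $\theta x_k$ on $\sigma\cap T_i$ is $0$, so $\dist(\theta x|_{T_i},\bbZ^{T_i})\ge\theta\|x|_{\sigma\cap T_i}\|_2\ge\gamma\theta\|x|_{T_i}\|_2$.

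Your second case should simply be dropped. Since only some $c>0$ is required, take $c\le\sqrt\delta/2$ (the paper's $cb_2<1/2$), so every $\theta<c\sqrt n$ already falls in your first case. This matters because the ``known fact'' you invoke there fails once $c\ge\sqrt\delta$: take $w$ with all entries $b_2/\sqrt n$ and $\theta=\sqrt{\delta n}$, which makes $\theta w$ integral.
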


\begin{proof}
Set $S:=\operatorname{Spread}(x)$ and choose $r>0$ with
$Qr<b_3/2$. By Lemma~\ref{lem:spread}, $|S|\ge b_3n$ and
$b_1/\sqrt n\le|x_j|\le b_2/\sqrt n$ for every $j\in S$.
If $Q$ distinct indices $i_1,\dots,i_Q$ satisfied
$|T_{i_k}\cap S|<rn$, then
\[
        \left|[n]\setminus\bigcup_{k=1}^Q T_{i_k}\right|
        \ge\left|S\setminus\bigcup_{k=1}^Q T_{i_k}\right|
        \ge b_3n-Qrn>\frac{b_3n}{2}>\eta n,
\]
contrary to the covering condition in
Definition~\ref{def:well-spread}. Therefore, for all but fewer than $Q$ indices $i$, $|T_i\cap S|\ge rn$.

For every remaining index, $\|x|_{T_i}\|_2^2\ge|T_i\cap S| {b_1^2}/{n}
        \ge rb_1^2$. 
Set $a_{\ref{lem:qlcd-lower}}:=b_1\sqrt r$, choose
$c_{\ref{lem:qlcd-lower}}>0$ such that $c_{\ref{lem:qlcd-lower}}b_2<1/2$, and set $0<\gamma<a_{\ref{lem:qlcd-lower}}$.
For $0<t\le c_{\ref{lem:qlcd-lower}}\sqrt n$, we have
$|tx_j|<1/2$ for every $j\in T_i\cap S$. The closest integer on each of these
coordinates is zero. Hence
\[
        \dist(tx|_{T_i},\bbZ^{T_i})
        \ge t\|x|_{T_i\cap S}\|_2
        \ge tb_1\sqrt r
        >\gamma t\|x|_{T_i}\|_2,
\]
where we used $\|x|_{T_i}\|_2\le1$. Thus, no $t\le c_{\ref{lem:qlcd-lower}}\sqrt n$ can satisfy the LCD condition, and hence
\[
        \LCD_{\mu\sqrt n,\gamma}(x|_{T_i})
        \ge
        c_{\ref{lem:qlcd-lower}}\sqrt n
\]
for all but fewer than $Q$ indices $i$.
\end{proof}

Fix $\gamma>0$ as in Lemma~\ref{lem:qlcd-lower}. Then, we will construct sufficiently small nets for level sets of the QLCD.
For $D\ge c_{\ref{lem:qlcd-lower}}\sqrt n$, define
\[
        K_D:=\{x\in\Incomp(\delta,\rho):
        D\le\QLCD^{\mathcal T}_{Q,\mu\sqrt n,\gamma}(x)\le2D\}.
\]
The next lemma provides the net used in the treatment of the small QLCD regime.

\begin{lemma}\label{lem:qlcd-net}
There exists $C_{\ref{lem:qlcd-net}}>0$, depending only on $\lambda,\delta,\rho,\gamma$, such that the following holds. For every
$D\ge c_{\ref{lem:qlcd-lower}}\sqrt n$ and every sufficiently small
$\mu>0$, there exists a finite set $\mathcal N_D$ satisfying
\[
        \mathcal N_D\subset K_D+
        C_{\ref{lem:qlcd-net}}\frac{\mu\sqrt n}{D}B_2^n,
        \qquad
        |\mathcal N_D|\le n\mu^{-1}D
        \left(\frac{C_{\ref{lem:qlcd-net}}D\mu^{\lambda-1}}{\sqrt n}\right)^n.
\]
Moreover, for every $x\in K_D$ and every deterministic $n\times n$
matrix $M$, there exists $y\in\mathcal N_D$ such that
\[
        \|M(x-y)\|_2\le C_{\ref{lem:qlcd-net}}
        \frac{\mu}{D}\|M\|_{\HS}.
\]
\end{lemma}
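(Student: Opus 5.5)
The plan is to follow the quantile-LCD net construction of Jain, Sah and Sawhney \cite{MR4523249}, adapted to the ordinary LCD. We reduce to a single restriction $x|_{T_i}$ with controlled LCD, use Lemma~\ref{lem:lcd-net} (through Corollary~\ref{cor:lcd-sublevel-net}) on the $d$ coordinates of $T_i$, and use a crude volumetric net on the remaining $n-d\le(1-\lambda)n$ coordinates. Finally we apply a randomized rounding step to upgrade the Euclidean approximation to the Hilbert--Schmidt approximation $\|M(x-y)\|_2\lesssim(\mu/D)\|M\|_{\HS}$.

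\emph{Step 1: locating a good index.} Fix $x\in K_D$. Since $\QLCD^{\mathcal T}_{Q,\mu\sqrt n,\gamma}(x)\le2D$, at least $Q$ indices $i$ satisfy $\LCD_{\mu\sqrt n,\gamma}(x|_{T_i})\le 2D$. By Lemma~\ref{lem:qlcd-lower} (its proof), fewer than $Q$ indices fail the conditions $\|x|_{T_i}\|_2\ge a_{\ref{lem:qlcd-lower}}$ and $\LCD_{\mu\sqrt n,\gamma}(x|_{T_i})\ge c_{\ref{lem:qlcd-lower}}\sqrt n$. Hence some index $i=i(x)$ has $x|_{T_i}\in\mathcal S_{d,D}$, the sublevel set of Corollary~\ref{cor:lcd-sublevel-net}, with $m=d\ge\lambda n$, $a=a_{\ref{lem:qlcd-lower}}$ and $c_0=c_{\ref{lem:qlcd-lower}}$. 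The prefactor $n$ in the cardinality bound will come from a union over the choice of $i$.

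\emph{Step 2: coarse product net.} Fix $i$. Corollary~\ref{cor:lcd-sublevel-net} gives a $4\mu\sqrt n/D$-net of $\mathcal S_{d,D}$ of size $\mu^{-1}D(CD/\sqrt n)^{d}$. On $[n]\setminus T_i$, we use a standard $\mu\sqrt n/D$-net of $B_2^{n-d}$ of size $(3D/(\mu\sqrt n))^{n-d}$. Since $D\ge c_{\ref{lem:qlcd-lower}}\sqrt n$, we have $D/\sqrt n\gtrsim1$. Since $\mu<1$ and $n-d\le(1-\lambda)n$, we have $\mu^{-(n-d)}\le\mu^{(\lambda-1)n}$. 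Together these bound the product of the two nets by $\mu^{-1}D\,(CD\mu^{\lambda-1}/\sqrt n)^n$. The product is a $C\mu\sqrt n/D$-net of $\{x\in K_D:i(x)=i\}$. Replacing each net point by a point of $K_D$ within the net radius (when one exists) doubles the radius and places the coarse net inside $K_D$.

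\emph{Step 3: randomized rounding, the main obstacle.} A Euclidean error $r=C\mu\sqrt n/D$ only gives $\|M(x-y)\|_2\le\|M\|\,r\le \|M\|_{\HS}\,C\mu\sqrt n/D$, which loses a factor $\sqrt n$. To remove it, for each coarse point $z$ we take all points of the lattice $z+(\mu/D)\bbZ^n$ lying in the ball of radius $2r$ around $z$. A volume argument bounds their number by $C'^n$, since the radius is $\sqrt n$ times the mesh; this only changes the constant in the base. For $x$ within $r$ of $z$, we round each coordinate of $x-z$ independently and randomly up or down to the adjacent grid points, with mean equal to $x$. Then
\[
\mathbf E\|M(x-y)\|_2^2=\sum_j \operatorname{Var}(y_j)\|M^{(j)}\|_2^2\le (\mu/D)^2\|M\|_{\HS}^2 .
\]
Also, $\|x-y\|_2\le \sqrt n\,\mu/D\le r$ deterministically. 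So some rounding $y$ lies in the enlarged net, satisfies $\|M(x-y)\|_2\le(\mu/D)\|M\|_{\HS}$, and stays within $C_{\ref{lem:qlcd-net}}\mu\sqrt n/D$ of $K_D$. The net is deterministic and independent of $M$, as required. The delicate bookkeeping is ensuring that the $\mu$-dependence enters only through $\mu^{\lambda-1}$ per coordinate. This holds because the LCD net on $T_i$ has $\mu$-free base (by the Remark after Lemma~\ref{lem:lcd-net}) and the rounding step costs only $C'^n$.
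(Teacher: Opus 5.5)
Your proposal is correct. Steps 1 and 2 coincide with the paper's proof. The paper also picks one row support $T$ on which $x|_T$ has mass at least $a_{\ref{lem:qlcd-lower}}$ and LCD in $[c_{\ref{lem:qlcd-lower}}\sqrt n,2D]$; this uses, as you note, that the proof of Lemma~\ref{lem:qlcd-lower} gives a single exceptional set of fewer than $Q$ indices for both conditions. It then nets the restriction via Corollary~\ref{cor:lcd-sublevel-net}, nets $T^c$ volumetrically, and takes a union over the at most $n$ choices of $T$.

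The difference is in Step 3. The paper does not round by hand. It feeds the covering bound $N(K_D,C\mu\sqrt n D^{-1}B_2^n)$ into Livshyts' theorem (Lemma~\ref{lem:rr}) with a fixed $\alpha\in(0,1/2)$ and $\beta=C\mu\sqrt n/D$. Requiring $\beta<\alpha/10$ is where it needs $\mu$ small. This yields $\|M(x-y)\|_2\le 2\beta(\alpha\sqrt n)^{-1}\|M\|_{\HS}$, and the factor $\exp(C_{\ref{lem:rr}}\alpha^{0.08}\log(1/\alpha)n)$ is absorbed into the base.

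Your elementary randomized rounding on $z+(\mu/D)\bbZ^n$ instead pays a factor $C'^n$, where $C'$ depends only on the ratio (about $2C\sqrt n$) of radius to mesh. This is equally harmless here, because the only requirement is that the base constant be independent of $\mu$, so that $C'C_{\ref{lem:qlcd-net}}\mu^\lambda<1$ can be arranged in Proposition~\ref{prop:small}.

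Your route is self-contained and shows that the refined, sub-exponential loss in Livshyts' theorem is not needed for this lemma. That refinement is genuinely needed only in Lemma~\ref{lem:compressible}, where the net has size $e^{O(\delta\log(1/\delta))n}$ and must beat a fixed $e^{-cn}$. The paper's route is shorter because the tool is already imported for that purpose.
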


\begin{proof}
Let $x\in K_D$. Since $\QLCD^{\mathcal T}_{Q,\mu\sqrt n,\gamma}(x)\le2D$, 
there are at least $Q$ row supports $T_i$ such that $\LCD_{\mu\sqrt n,\gamma}(x|_{T_i})\le2D$. On the other hand, Lemma~\ref{lem:qlcd-lower} shows that fewer than $Q$ row supports fail the corresponding mass and LCD lower bounds. It follows that
there exists a row support $T=T_i$ such that, with $m:=|T| =d\ge\lambda n$, $\|x|_T\|_2\ge a_{\ref{lem:qlcd-lower}}$, and $c_{\ref{lem:qlcd-lower}}\sqrt n \le \LCD_{\mu\sqrt n,\gamma}(x|_T)\le2D$. 

Fix one of such supports $T$. Let $C_{\ref{lem:qlcd-net}}\ge C_{\ref{cor:lcd-sublevel-net}}>0$ be sufficiently large, depending only on
$C_{\ref{cor:lcd-sublevel-net}}$. By Corollary \ref{cor:lcd-sublevel-net}, we obtain a $(4\mu\sqrt n/D)$-net for the possible restrictions $x|_T$ of cardinality at most $\mu^{-1}D\left({C_{\ref{cor:lcd-sublevel-net}}D}/{\sqrt n}\right)^m\le\mu^{-1}D\left({C_{\ref{lem:qlcd-net}}D}/{\sqrt n}\right)^m$.

On the complementary coordinates, we only use
$\|x|_{T^c}\|_2\le1$. The usual volumetric estimate gives
\[
\begin{aligned}
N\left(B_2^{n-m},\frac{4\mu\sqrt n}{D}B_2^{n-m}\right)\le\left(
1+\frac{D}{2\mu\sqrt n}\right)^{n-m}\le\left(\frac{C_{\ref{lem:qlcd-net}}D}{\mu\sqrt n}\right)^{n-m},
\end{aligned}
\]
where the last inequality holds since
$D\ge c_{\ref{lem:qlcd-net}}\sqrt n$ and $\mu$ is sufficiently small.

Taking the product of the two component nets and using the orthogonality of
the coordinate subspaces corresponding to $T$ and $T^c$, we obtain a
 $(C_{\ref{lem:qlcd-net}} {\mu\sqrt n}/{D})$-net for the corresponding vectors. For a fixed support $T$, its cardinality
is at most
\[
\mu^{-1}D\left(\frac{C_{\ref{lem:qlcd-net}}D}{\sqrt n}\right)^m\left(
\frac{C_{\ref{lem:qlcd-net}}D}{\mu\sqrt n}\right)^{n-m}=\mu^{-1}D\left(
\frac{C_{\ref{lem:qlcd-net}}D}{\sqrt n}\right)^n\mu^{-(n-m)} \le\mu^{-1}D\left(\frac{C_{\ref{lem:qlcd-net}}D\mu^{\lambda-1}}{\sqrt n}\right)^n,
\]
where we used $m\ge\lambda n$ and $0<\mu<1$. Taking the union over the at most $n$ possible supports $T$ gives
\[
        N\left(K_D,C_{\ref{lem:qlcd-net}}\frac{\mu\sqrt n}{D}B_2^n\right)
        \le n\mu^{-1}D
        \left(\frac{C_{\ref{lem:qlcd-net}}D\mu^{\lambda-1}}{\sqrt n}\right)^n.
\]

Apply Lemma~\ref{lem:rr} with a fixed $\alpha \in(0,1/2)$ and
$\beta =C_{\ref{lem:qlcd-net}}\mu\sqrt n/D$. Since
$D\ge c_{\ref{lem:qlcd-lower}}\sqrt n$, we have
$\beta <\alpha /10$ after choosing $\mu$ sufficiently small.
The additional factor
$\exp(C_{\ref{lem:rr}}\alpha ^{0.08}\log(1/\alpha)n)$ is absorbed
into the constant in the base of the preceding covering estimate.
The bound for $\|M(x-y)\|_2$ follows from the same lemma. Increasing $C_{\ref{lem:qlcd-net}}$ completes the proof.
\end{proof}

For $\kappa>0$, set $L:=\mu n/\kappa$. By Lemma~\ref{lem:qlcd-lower}, the QLCD of every incompressible vector is bounded below by $c_{\ref{lem:qlcd-lower}}\sqrt n$. Therefore, decomposing the remaining vectors according to their dyadic QLCD levels and writing $D=2^d$, we obtain
\[
\begin{aligned}
&\mathbf P_\Xi\left\{
\exists x\in\operatorname{Incomp}(\delta,\rho):
\|Bx\|_2\le\kappa
\right\} \\&\le
\sum_{\substack{
d=\log (c_{\ref{lem:qlcd-lower}}\sqrt n)}}^{\log(\mu n/\kappa)}
\mathbf P_\Xi\left\{
\exists x\in K_D:
\|Bx\|_2\le\kappa
\right\}\\ &\qquad+
\mathbf P_\Xi\left\{
\exists x\in\operatorname{Incomp}(\delta,\rho):
\|Bx\|_2\le\kappa,\ 
\QLCD^{\mathcal T}_{Q,\mu\sqrt n,\gamma}(x)\ge L
\right\}.
\end{aligned}
\]
Therefore, the proof is reduced to the small QLCD regime $c_{\ref{lem:qlcd-lower}}\sqrt n\le D\le L$ and the large QLCD regime
 $\QLCD^{\mathcal T}_{Q,\mu\sqrt n,\gamma}(x)\ge L$. 

\subsection{The small QLCD range}
\begin{proposition}\label{prop:small}
For $\chi\ll\mu\ll1$, there exists a constant $c_{\ref{prop:small}}>0$ such that, for every fixed $A\in\mathcal G$ and every $\kappa\ge e^{-\chi n}$,
\[
\mathbf P_\Xi\left\{
        \exists x\in\operatorname{Incomp}(\delta,\rho):
        \|Bx\|_2\le\kappa,\ 
        \QLCD^{\mathcal T}_{Q,\mu\sqrt n,\gamma}(x)\le L
\right\}
\le
e^{-c_{\ref{prop:small}}n}.
\]
\end{proposition}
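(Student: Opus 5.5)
We will prove Proposition~\ref{prop:small} by a union bound over the dyadic QLCD levels $D=2^d$ with $c_{\ref{lem:qlcd-lower}}\sqrt n\le D\le L$, using for each level the net $\mathcal N_D$ of Lemma~\ref{lem:qlcd-net} and a tensorized small ball estimate for a single net point. There are $O(n+\chi n)=O(n)$ levels, so a bound of $e^{-cn}$ per level suffices.

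\textbf{Approximation step.} Fix $D$ and suppose $x\in K_D$ with $\|Bx\|_2\le\kappa$. By Lemma~\ref{lem:qlcd-net} applied with $M=B$ there is $y\in\mathcal N_D$ with
\[
\|B(x-y)\|_2\le C_{\ref{lem:qlcd-net}}\frac{\mu}{D}\|B\|_{\HS}\le C_{\ref{lem:qlcd-net}}\frac{\mu n}{D}.
\]
Since $D\le L=\mu n/\kappa$, we have $\kappa\le\mu n/D$, and hence $\|By\|_2\le 2C\mu n/D=:\eps\sqrt n$ with $\eps:=2C\mu\sqrt n/D$. Moreover, $\|x-y\|_2\le C\mu\sqrt n/D\le C\mu/c_{\ref{lem:qlcd-lower}}$ is small.

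\textbf{Small ball step for a fixed $y$.} We bound $\mathbf P_\Xi\{\|By\|_2\le\eps\sqrt n\}$ row by row. Since $y$ is close to $x$, we first transfer the structural information.
\begin{enumerate}
\item Lemma~\ref{lem:qlcd-lower} shows that all but fewer than $Q$ rows satisfy $\|x|_{T_i}\|_2\ge a_{\ref{lem:qlcd-lower}}$. Since $\|x-y\|_2$ is much smaller than $a_{\ref{lem:qlcd-lower}}$, the same rows satisfy $\|y|_{T_i}\|_2\ge a/2$.
\item All but at most $Q-1$ rows satisfy $\LCD_{\mu\sqrt n,\gamma}(x|_{T_i})\ge D$, because the $Q$-th smallest value is at least $D$.
\item For these rows, Lemma~\ref{lem:lcd-stability} gives $\LCD_{\mu\sqrt n/2,\gamma/2}(y|_{T_i})\ge\min\{D,\mu\sqrt n/(4\|x-y\|_2)\}\gtrsim D/C$. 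The hypothesis $\|x-y\|<\gamma\|x|_{T_i}\|/5$ of that lemma holds once $\mu$ is small.
\end{enumerate}
Lemma~\ref{lem:lcd-smallball} then gives, for at least $n-2Q$ rows,
\[
\mathcal L(B_i\cdot y,\eps)\le C\eps+\frac{C}{D}+Ce^{-c\mu^2n}\le C'\frac{\mu\sqrt n}{D},
\]
where the last term is absorbed because $D\le\mu n e^{\chi n}$ and $\chi\ll\mu^2$. The rows are independent given $A$. The standard tensorization lemma (Rudelson--Vershynin) turns these row bounds into
\[
\mathbf P\{\|By\|_2\le\eps\sqrt n\}\le\Bigl(C''\frac{\mu\sqrt n}{D}\Bigr)^{n-2Q}.
\]

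\textbf{Union bound and the main obstacle.} Multiplying by $|\mathcal N_D|\le n\mu^{-1}D\,(CD\mu^{\lambda-1}/\sqrt n)^n$ gives
\[
n\mu^{-1}D\,\Bigl(\frac{D}{\sqrt n}\Bigr)^{2Q}\,\bigl(C^2\mu^{\lambda}\bigr)^{n}\,\mu^{-2Q}.
\]
Two features make this work.
\begin{enumerate}
\item The factor $\mu^{\lambda-1}$ in the net is beaten by the factor $\mu$ from small ball. This is the crucial gain, coming from the fact that the net only needs to be fine on the support $T$ with $|T|\ge\lambda n$.
\item The polynomial prefactors, including $(D/\sqrt n)^{2Q+1}\le(\mu\sqrt n e^{\chi n})^{2Q+1}$, cost at most $e^{(2Q+1)\chi n+O(\log n)}$. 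This is negligible once $\chi\ll\mu$ and $\mu^\lambda C^2<e^{-1}$.
\end{enumerate}
Hence each level contributes at most $e^{-cn}$, and summing over $O(n)$ levels proves the proposition.

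The step I expect to be delicate is the transfer from $x$ to the net point $y$. The net controls $\|x-y\|_2$ only at scale $\mu\sqrt n/D$, so the LCD stability and the mass lower bound must survive at this scale. The $\eps$ in the small ball bound must also match the $D$-scale, and the fewer than $2Q$ exceptional rows must be handled. This bookkeeping is exactly what forces the order $\chi\ll\mu\ll1$, with $\gamma$ fixed beforehand.
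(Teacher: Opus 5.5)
Your proposal is correct and follows the paper's proof step for step: dyadic QLCD levels, the net $\mathcal N_D$ of Lemma~\ref{lem:qlcd-net}, transfer of the mass and LCD lower bounds to net points via Lemma~\ref{lem:lcd-stability}, tensorization over the at least $n-2Q$ good rows at scale $\mu\sqrt n/D$, and the $\mu^{\lambda}$ gain beating the polynomial prefactors once $\chi\ll\mu$. One point needs fixing: Lemma~\ref{lem:qlcd-net} does not assert $\|x-y\|_2\le C\mu\sqrt n/D$ for the approximating point, and the small-ball bound for a fixed $y$ must rest on a deterministic property of $y$, not on the random $x$; so, as the paper does, transfer the structure from some $z(y)\in K_D$ with $\|y-z(y)\|_2\le C_{\ref{lem:qlcd-net}}\mu\sqrt n/D$, which exists by the containment $\mathcal N_D\subset K_D+C_{\ref{lem:qlcd-net}}\frac{\mu\sqrt n}{D}B_2^n$.
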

\begin{proof}
By Lemma~\ref{lem:qlcd-lower}, 
$\QLCD^{\mathcal T}_{Q,\mu\sqrt n,\gamma}(x)\ge c_{\ref{lem:qlcd-lower}}\sqrt n$ for every $x\in\operatorname{Incomp}(\delta,\rho)$. If
$L<c_{\ref{lem:qlcd-lower}}\sqrt n$, the event in the statement is empty.
We may therefore decompose the remaining range into dyadic level sets
$K_D$, where $c_{\ref{lem:qlcd-lower}}\sqrt n\le D\le L$. Since $\kappa\ge e^{-\chi n}$, the number of such levels is at most $\chi n\log(\mu n)$.

Fix one such $D$ and let $\mathcal N_D$ be the net given by
Lemma~\ref{lem:qlcd-net}. If there exists $x\in K_D$ such that
$\|Bx\|_2\le\kappa$, then there is $y\in\mathcal N_D$ satisfying 
        $\|B(x-y)\|_2
        \le
        C_{\ref{lem:qlcd-net}}
         {\mu}\|B\|_{\mathrm{HS}}/D
        \le
        C_{\ref{lem:qlcd-net}} {\mu n}/{D}$. 
Thus, 
        $\|By\|_2
        \le
        \kappa+
        C_{\ref{lem:qlcd-net}} {\mu n}/{D}$. 

Since $\mathcal N_D\subset K_D+ (C_{\ref{lem:qlcd-net}} {\mu\sqrt n}/{D})B_2^n$, we obtain a uniform small ball estimate for the points of
$\mathcal N_D$. For every $y\in\mathcal N_D$, there exists $z=z(y)\in K_D$
such that $\|y-z\|_2 \le C_{\ref{lem:qlcd-net}} {\mu\sqrt n}/{D}$.
Since $\QLCD^{\mathcal T}_{Q,\mu\sqrt n,\gamma}(z)\ge D$,
at most $Q-1$ row supports satisfy $\LCD_{\mu\sqrt n,\gamma}(z|_{T_i})<D$.
Moreover, Lemma~\ref{lem:qlcd-lower} shows that fewer than $Q$ row supports fail the bound $\|z|_{T_i}\|_2\ge a_{\ref{lem:qlcd-lower}}$.
Thus, outside a set of fewer than $2Q$ restrictions,
\[
        \|z|_{T_i}\|_2\ge a_{\ref{lem:qlcd-lower}},
        \qquad
        \LCD_{\mu\sqrt n,\gamma}(z|_{T_i})\ge D.
\]
For each such restriction, $\|y|_{T_i}-z|_{T_i}\|_2\le C_{\ref{lem:qlcd-net}}{\mu\sqrt n}/{D}$. 
Since $D\ge c_{\ref{lem:qlcd-lower}}\sqrt n$, choosing $\mu$ sufficiently small gives $\|y|_{T_i}\|_2\ge a_{\ref{lem:qlcd-lower}}/{2}$.
Lemma~\ref{lem:lcd-stability} also yields $\LCD_{\mu\sqrt n/2,\gamma/2}(y|_{T_i})\gtrsim D$.

For every such row, 
\[
(By)_i=\sum_{j\in T_i}\xi_{ij}y_j.
\]
Therefore, by Lemma~\ref{lem:lcd-smallball}, for every
$\varepsilon\ge0$, there exist constants $C,c>0$ such that
\[
        \mathcal L\big((By)_i,\varepsilon\big)
        \le
        C\varepsilon+\frac{C}{D}+Ce^{-c\mu^2n}.
\]
Set
\[
        \varepsilon_D:=
        \frac{\kappa+C_{\ref{lem:qlcd-net}}\mu n/D}{\sqrt n}.
\]
Since $D\le L=\mu n/\kappa$, we have $\varepsilon_D\lesssim{\mu\sqrt n}/{D}$. 
Also, since $D\ge c_{\ref{lem:qlcd-lower}}\sqrt n$ and $D\le L\le\mu ne^{\chi n}$ with
$\chi\ll\mu$, we have 
        $ 1/D+\exp(-c\mu^2n)
        \lesssim {\mu\sqrt n}/{D}$. 
Hence, Lemma~\ref{lem:lcd-smallball} yields
\[
        \mathcal L((By)_i,\varepsilon_D)
        \lesssim
        \frac{\mu\sqrt n}{D}
\]
for all but fewer than $2Q$ rows.

Conditioned on the fixed support matrix $A$, the rows of $B$ are independent.
A straightforward conditional version of the tensorization inequality \cite[Lemma 2.2(1)]{MR2407948} shows that, for an absolute constant $C'>0$,
\[
\mathbf P_\Xi\left\{
        \|By\|_2
        \le
        \kappa+
        C_{\ref{lem:qlcd-net}}\frac{\mu n}{D}
\right\}
\le
\left(
        C'\frac{\mu\sqrt n}{D}
\right)^{n-2Q}.
\]
Taking the union bound over $\mathcal N_D$ and using
Lemma~\ref{lem:qlcd-net}, we obtain
\[
\begin{aligned}
\mathbf P_\Xi\left\{
        \exists x\in K_D:\|Bx\|_2\le\kappa
\right\}
&\le
n\mu^{-1}D
\left(
        \frac{
        C_{\ref{lem:qlcd-net}}D\mu^{\lambda-1}
        }{\sqrt n}
\right)^n
\left(
        C'\frac{\mu\sqrt n}{D}
\right)^{n-2Q}                                                   \\
&\le
n\mu^{-2Q-1}D^{2Q+1}C'^{-2Q}
\left(C'C_{\ref{lem:qlcd-net}}\mu^\lambda\right)^n .
\end{aligned}
\]

Choose $\mu>0$ sufficiently small so that $C'C_{\ref{lem:qlcd-net}}\mu^\lambda<1$. 
Since $D\le L\le \mu ne^{\chi n}$, choosing $\chi\ll\mu\ll1$ so that $e^{(2Q+1)\chi}C'C_{\ref{lem:qlcd-net}}\mu^\lambda<1$, we have
\[
        \mathbf P_\Xi\left\{
        \exists x\in K_D:\|Bx\|_2\le\kappa
        \right\}
        \le e^{-c'n},
\]
for some $c'>0$ depending only on the fixed parameters. Summing over all levels completes the proof.
\end{proof}
\subsection{The large QLCD range}
We now turn to the remaining regime
\[
        \QLCD^{\mathcal T}_{Q,\mu\sqrt n,\gamma}(x)
        \ge
        L=\frac{\mu n}{\kappa}.
\]
For every possible realization $M$ of $B$, fix vectors
$x(M),y(M)\in\mathbb S^{n-1}$ satisfying
\[
        \|Mx(M)\|_2=s_n(M),
        \qquad \|M^Ty(M)\|_2=s_n(M).
\]

Define
\[
\mathcal E_{\mathrm{large}}
:=
\left\{
        s_n(B)\le\kappa,\ 
        x(B),y(B)\in\operatorname{Incomp}(\delta,\rho),\
        \QLCD^{\mathcal T}_{Q,\mu\sqrt n,\gamma}(x(B))\ge L
\right\}.
\]
The treatment of the large QLCD regime is reduced to the following
proposition.

\begin{proposition}\label{prop:large}
There exist constants $C_{\ref{prop:large}},c_{\ref{prop:large}}>0$ such that for every
$\kappa>0$ and every fixed $A\in\mathcal G$,
\[
        \mathbf P_\Xi(\mathcal E_{\rm large})
        \le C_{\ref{prop:large}}\kappa\sqrt n+e^{-c_{\ref{prop:large}}n}.
\]
\end{proposition}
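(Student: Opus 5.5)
We plan to use the standard invertibility-via-distance argument of Rudelson and Vershynin, adapted to the fixed support $A\in\mathcal G$. The relevant quantity is the distance from a column of $B$ to the span of the others. Write $B^{(j)}$ for the columns and $H_j:=\operatorname{span}\{B^{(k)}:k\ne j\}$.

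\textbf{Step 1: reduction to distances.} On $\mathcal E_{\rm large}$ the vector $x=x(B)$ is incompressible, so by Lemma~\ref{lem:spread} at least $b_3n$ coordinates satisfy $|x_j|\ge b_1/\sqrt n$. Since $\|Bx\|_2\ge|x_j|\dist(B^{(j)},H_j)$ for every $j$, on $\mathcal E_{\rm large}$ we get $\dist(B^{(j)},H_j)\le\kappa\sqrt n/b_1$ for at least $b_3n$ indices $j$. Markov's inequality (averaging) then gives
\[
\mathbf P_\Xi(\mathcal E_{\rm large})\le\frac{1}{b_3n}\sum_{j=1}^n\mathbf P_\Xi\bigl\{\dist(B^{(j)},H_j)\le\kappa\sqrt n/b_1,\ \mathcal E'_j\bigr\}.
\]
Here $\mathcal E'_j$ records the structural information that we retain. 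We want to keep information about $y(B)$, the left singular vector. The normal vector $w_j$ to $H_j$ satisfies $\dist(B^{(j)},H_j)\ge|\langle w_j,B^{(j)}\rangle|$. To make the QLCD hypothesis usable, we instead use the symmetric counterpart: the argument is run on $B^T$, i.e.\ on rows. We take $\dist(B_i,\operatorname{span}\{B_k:k\ne i\})$ and use the incompressibility of $y(B)$ to average over rows. The normal vector to the row span of the other rows is then a unit vector $z$ with $\|B_{[n]\setminus\{i\}}z\|_2=0$. On $\mathcal E_{\rm large}$, its relation with $x(B)$ must be used: $x(B)$ nearly annihilates all rows.

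\textbf{Step 2: conditioning.} Condition on all rows $k\ne i$, so that $z$ is fixed and independent of row $i$. Then $\dist\ge|\sum_{j\in T_i}\xi_{ij}z_j|$, and Lemma~\ref{lem:lcd-smallball} gives
\[
\mathbf P\le C\kappa\sqrt n+C/\LCD_{\mu\sqrt n,\gamma}(z|_{T_i})+Ce^{-c\mu^2n}.
\]
We therefore need $\LCD(z|_{T_i})\gtrsim L=\mu n/\kappa$ with overwhelming probability, so that $C/L\lesssim\kappa/\mu$, which is absorbed into $C\kappa\sqrt n$. To obtain this, apply Lemma~\ref{lem:compressible} and Proposition~\ref{prop:small} to the $(n-1)\times n$ matrix $B_{[n]\setminus\{i\}}$ with $\kappa$ replaced by $0$. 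These arguments only used row independence and the supports, so they apply verbatim after removing one row; the covering property survives since fewer than $Q$ restrictions are lost. They show that, except on probability $e^{-cn}$, every unit kernel vector is incompressible with $\QLCD_{Q}\ge L$ at any scale up to $e^{\chi n}$. Because QLCD controls all but $Q-1$ restrictions, $\LCD(z|_{T_i})\ge L$ holds for all but $Q-1$ indices $i$. These exceptional indices cost only $Q/(b_3n)$ of the average and are handled by restricting the sum to good rows. A subtlety is that the exceptional set depends on $z$, which depends on the other rows. We handle this by summing over $i$ with the indicator that $i$ is good for $z^{(i)}$, and bounding the bad indicators crudely.

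\textbf{Main obstacle.} The hardest step is making the row-wise distance argument compatible with the QLCD: the QLCD gives large LCD on all but $Q-1$ supports, and the bad supports must not be precisely the rows selected by Step 1. First we would exploit that Step 1 supplies $\ge b_3n$ candidate rows, whereas at most $Q-1$ are bad for any $z$. We then try to show that $z^{(i)}$ is close to a common vector for most $i$, namely the bottom singular direction, which gives a uniform exceptional set. The alternative is the standard $\operatorname{span}$-trick of Rudelson--Vershynin, where we only need the $\ge b_3n-Q$ good indices in the averaging. Combining with $\mathbf P(\mathcal G^c)$ gives $C\kappa\sqrt n+e^{-cn}$.
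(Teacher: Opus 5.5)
There is a genuine gap, and it is exactly the step you flag as the main obstacle. Your averaging over rows via the incompressibility of $y(B)$ matches the paper. You then replace $x(B)$ by the exact normal $z^{(i)}$ to the other rows, and from that point the hypothesis $\QLCD^{\mathcal T}_{Q,\mu\sqrt n,\gamma}(x(B))\ge L$ in $\mathcal E_{\rm large}$ is never used. What your route needs is: for each fixed $i$, outside an event of probability $e^{-cn}$, both $\|z^{(i)}|_{T_i}\|_2\ge a_{\ref{lem:qlcd-lower}}$ and $\LCD_{\mu\sqrt n,\gamma}(z^{(i)}|_{T_i})\ge L$ hold for the \emph{specific} support $T_i$ of the deleted row.

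Rerunning Lemma~\ref{lem:compressible} and Proposition~\ref{prop:small} on $B_{[n]\setminus\{i\}}$ does not give this. (Use the given $\kappa$ there; $\kappa=0$ is not admissible.) It yields only a quantile statement: $z^{(i)}$ is good on all but $Q-1$ supports, with an exceptional set that depends on $z^{(i)}$, hence on $i$. Nothing prevents $T_i$ from being exceptional for $z^{(i)}$ for every $i$. So $\sum_i\mathbf 1\{T_i\text{ exceptional for }z^{(i)}\}$ cannot be bounded crudely by $Q-1$, and you have no bound on $\mathbf P\{T_i\text{ exceptional for }z^{(i)}\}$ either.

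Your other remedy also fails. On $\mathcal E_{\rm large}$ you only get $\|x(B)-\langle x(B),z^{(i)}\rangle z^{(i)}\|_2\le\kappa/s$, where $s$ is the smallest nonzero singular value of the $(n-1)\times n$ matrix $B_{[n]\setminus\{i\}}$. Transferring $\LCD\gtrsim L=\mu n/\kappa$ through Lemma~\ref{lem:lcd-stability} needs a perturbation of size $O(\kappa/\sqrt n)$, that is, $s\gtrsim\sqrt n$. But $s$ is typically of order $n^{-1/2}$ (by interlacing it is at most $s_{n-1}(B)$).

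The missing idea is to keep the good-row condition on the single vector $x(B)$ inside the averaged events, and then transfer it to a test vector measurable with respect to the other rows. This avoids any structure theorem for a normal vector. Since $x(B)$ is one vector, fewer than $2Q$ rows fail the mass or LCD condition. Hence at least $\rho^2\delta n/4$ rows are simultaneously good for $y(B)$ and $x(B)$, which gives the events $G_i$ and \eqref{eq:large-average}.

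The transfer works as follows. Conditional on the other rows $H$, the paper takes the set $\mathcal S_H$ of completions $M$ for which $x(M)$ has mass at least $a_{\ref{lem:qlcd-lower}}$ and $\LCD\ge L$ on $T_i$. It picks $\widetilde B_H\in\mathcal S_H$ minimizing $s_n$ and sets $w_H:=x(\widetilde B_H)$. This $w_H$ is $\mathcal H_i$-measurable and good on $T_i$ by construction. For every $M$ in $G_i$ we have $\|N_Hw_H\|_2\le s_n(\widetilde B_H)\le s_n(M)$. Lemma~\ref{lem:approx-normal} then gives $|\langle M_i,w_H\rangle|\le2\dist(M_i,V_H)\le2\kappa/|y_i(M)|\lesssim\kappa\sqrt n$, and Lemma~\ref{lem:lcd-smallball} finishes.

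If you want to keep the exact normal vector, you must prove a new estimate: for fixed $i$, with probability $1-e^{-cn}$, no unit vector in $\ker B_{[n]\setminus\{i\}}$ has small mass, or LCD at most $L$, on the particular support $T_i$. That needs a net argument built on $T_i$ plus a separate argument for the mass, and neither is in your proposal.
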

By Lemma~\ref{lem:spread}, on
$\mathcal E_{\rm large}$ at least $\rho^2\delta n/2$ indices satisfy
$|y_i(B)|\ge\rho/\sqrt{2n}$. Fewer than $Q$ indices satisfy $\|x(B)|_{T_i}\|_2<a_{\ref{lem:qlcd-lower}}$, by
Lemma~\ref{lem:qlcd-lower}, and fewer than $Q$ indices satisfy
$\LCD_{\mu\sqrt n,\gamma}(x(B)|_{T_i})<L$.
Hence at least $\rho^2\delta n/4$ indices satisfy
\begin{equation}\label{eq:large-good-rows}
        |y_i(B)|\ge\frac\rho{\sqrt{2n}},
        \qquad \|x(B)|_{T_i}\|_2\ge a_{\ref{lem:qlcd-lower}},
        \qquad \LCD_{\mu\sqrt n,\gamma}(x(B)|_{T_i})\ge L
\end{equation}
for all sufficiently large $n$. Define
\[
        G_i:=\mathcal E_{\rm large}\cap
        \left\{|y_i(B)|\ge\frac\rho{\sqrt{2n}},\
        \|x(B)|_{T_i}\|_2\ge a_{\ref{lem:qlcd-lower}},\
        \LCD_{\mu\sqrt n,\gamma}(x(B)|_{T_i})\ge L\right\}.
\]
Then, we have
\begin{equation}\label{eq:large-average}
        \mathbf1_{\mathcal E_{\rm large}}
        \le\frac4{\rho^2\delta n}\sum_{i=1}^n\mathbf1_{G_i},
        \qquad
        \mathbf P_\Xi(\mathcal E_{\rm large})
        \le\frac4{\rho^2\delta n}\sum_{i=1}^n\mathbf P_\Xi(G_i).
\end{equation}
Therefore, the argument is reduced to proving the following lemma.

\begin{lemma}\label{lem:single-row}
There exist constants $C_{\ref{lem:single-row}},c_{\ref{lem:single-row}}>0$ such that for every
$\kappa>0$, every fixed $A\in\mathcal G$, and every $i\in[n]$,
\[
        \mathbf P_\Xi(G_i)
        \le C_{\ref{lem:single-row}}\kappa\sqrt n
        +e^{-c_{\ref{lem:single-row}}n}.
\]
\end{lemma}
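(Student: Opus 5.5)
We follow the standard Rudelson--Vershynin ``invertibility via distance'' argument, adapted to row $i$ and its support $T_i$. Fix $i$, and let $H_i$ be the span of the rows $B_j$, $j\ne i$; these rows are independent of $B_i$ once $A$ is fixed. On $G_i$ we have $\|B^Ty\|_2\le\kappa$ with $y=y(B)$, so
\[
|y_i|\,\dist(B_i,H_i)\le \Bigl\|\sum_j y_jB_j\Bigr\|_2\le\kappa .
\]
Since $|y_i|\ge\rho/\sqrt{2n}$, this forces $\dist(B_i,H_i)\le \sqrt2\kappa\sqrt n/\rho$. Let $z$ be a unit normal to $H_i$, measurable with respect to $(B_j)_{j\ne i}$. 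Then $|\langle B_i,z\rangle|\le\dist(B_i,H_i)$, so it suffices to bound
\[
\mathbf P_\Xi\bigl\{|\langle B_i,z\rangle|\le C\kappa\sqrt n,\ \text{$z$ has good arithmetic structure on }T_i\bigr\}
\]
plus an exponentially small error. Conditioning on the other rows, $\langle B_i,z\rangle=\sum_{j\in T_i}\xi_{ij}z_j$ is a Rademacher sum. Lemma~\ref{lem:lcd-smallball} then gives the bound $C\kappa\sqrt n+C/\LCD(z|_{T_i})+Ce^{-c\mu^2n}$, provided $\|z|_{T_i}\|_2\ge a$.

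The central step is to transfer the structural information on $G_i$ from $x=x(B)$ to $z$. We have $Bx$ small and $B_jx$ small for every $j\ne i$, so $x$ is almost orthogonal to $H_i$. The cleanest route is to work directly with the matrix $B^{(i)}$ of the rows $j\ne i$. The unit vector $z$ lies in $\ker B^{(i)}$. By Lemma~\ref{lem:compressible} for this $(n-1)\times n$ matrix, whose proof goes through verbatim since we lose one row, $z$ is incompressible with probability $1-e^{-cn}$. By Lemma~\ref{lem:qlcd-lower}, $\|z|_{T_i}\|_2\ge a_{\ref{lem:qlcd-lower}}$ for all but fewer than $Q$ indices $i$. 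This is not enough for a fixed $i$, and the same caveat applies to the LCD bound. So instead we would prove the statement
\[
\mathbf P_\Xi\bigl\{\exists z\in\Incomp:\ B^{(i)}z=0,\ \LCD_{\mu\sqrt n,\gamma}(z|_{T_i})<L \text{ or } \|z|_{T_i}\|_2<a\bigr\}\le e^{-cn}.
\]
To prove it, we rerun the small-QLCD net argument of Proposition~\ref{prop:small} with the single support $T_i$ in place of the quantile. Lemma~\ref{lem:qlcd-net} constructs its net from one support $T$ with small LCD, and here we take $T=T_i$. The tensorization then uses the $n-1-2Q$ rows $j\ne i$ whose supports satisfy the mass and LCD bounds. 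The count $\mu^{\lambda}$ per coordinate still beats the entropy, and the dyadic range of $D$ up to $L=\mu n/\kappa$ is handled exactly as before.

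It remains to exclude the case where $\|z|_{T_i}\|_2$ is small. If $\|z|_{T_i}\|_2<a$ for an incompressible $z$, then $z$ carries most of its mass off $T_i$. We treat this inside the same union bound, using the coarse net for compressible-like restrictions together with Lemma~\ref{lem:fixed-vector} for the rows $j\ne i$. The alternative is to use that $x$ itself satisfies the bounds on $T_i$ on $G_i$, and to show that $z$ and $x$ are close: $\|B^{(i)}x\|_2\le\kappa$, and $B^{(i)}$ has smallest singular value on $z^\perp$ bounded below. That second route requires a lower bound on $s_{n-1}$ of the rectangular matrix, so we would prefer the first.

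With these pieces, on the good event we have $\LCD_{\mu\sqrt n,\gamma}(z|_{T_i})\ge L=\mu n/\kappa$. The small-ball bound then gives
\[
C\kappa\sqrt n+C\kappa/(\mu n)+Ce^{-c\mu^2 n}\lesssim \kappa\sqrt n+e^{-cn}.
\]
The main obstacle is the third step: ensuring that the normal vector $z$ has large LCD \emph{on the specific support $T_i$}, rather than merely large quantile LCD. Handling this requires modifying the net argument so that the distinguished support is $T_i$.
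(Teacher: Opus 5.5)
Your skeleton, the bound $|y_i|\,\dist(B_i,H_i)\le\kappa$ followed by a small-ball estimate for a vector fixed given the other rows, is the right one. However, the step you flag yourself is a genuine gap, and the tools you propose do not close it.

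You need the true normal $z$ to $H_i$ to satisfy $\|z|_{T_i}\|_2\ge a$ and $\LCD_{\mu\sqrt n,\gamma}(z|_{T_i})\ge L$ for the \emph{specific} support $T_i$, off an event of probability $e^{-cn}$. There are four problems with how you propose to get this.
\begin{enumerate}
\item The low-mass case cannot be done with Lemma~\ref{lem:fixed-vector} and a net. The set $\{z\in\Incomp:\|z|_{T_i}\|_2<a\}$ contains the incompressible part of the unit sphere of $\bbR^{T_i^c}$, which has dimension $n-d\ge\lambda n$. To pass from $B^{(i)}z=0$ to $\|B^{(i)}y\|_2\le c_{\ref{lem:fixed-vector}}\sqrt n$ at a net point, you need scale $\lesssim c_{\ref{lem:fixed-vector}}$. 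The net then has cardinality of order $\exp\bigl((n-d)\log(1/c_{\ref{lem:fixed-vector}})\bigr)$, which the per-vector bound $e^{-c_{\ref{lem:fixed-vector}}n}$ cannot pay for. So no coarse net can work here.
\item The case $\|z|_{T_i}\|_2\ge a$ but $\LCD(z|_{T_i})<c\sqrt n$ (the spread set of $z$ almost missing $T_i$) lies outside Corollary~\ref{cor:lcd-sublevel-net}, and you do not treat it.
\item Your modified small-QLCD argument needs the net points to have LCD $\gtrsim D$ on most rows $j\ne i$. That requires a preliminary QLCD bound for $\ker B^{(i)}$, which you do not establish.
\item The fallback of showing $z\approx x$ needs a lower bound on the $(n-1)$-st singular value of $B^{(i)}$, which is as hard as the theorem.
\end{enumerate}
Something along your first route might be completable. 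For example, you could replace Lemma~\ref{lem:fixed-vector} by LCD-based tensorization over the $n-1$ other rows, using a very small mass threshold and the roughly $d$ rows of slack over the effective dimension of the degenerate set. But that is an extra structure theorem for the kernel of an $(n-1)\times n$ matrix on a prescribed support, and your proposal does not contain it.

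The paper avoids the true normal altogether. Given the other rows ($\mathcal H_i=H$), it proceeds as follows.
\begin{enumerate}
\item It considers the finite fibre $\mathcal C_H$ of matrices obtained by varying row $i$.
\item It lets $\mathcal S_H\subset\mathcal C_H$ consist of those $M$ for which $\|x(M)|_{T_i}\|_2\ge a_{\ref{lem:qlcd-lower}}$ and $\LCD_{\mu\sqrt n,\gamma}(x(M)|_{T_i})\ge L$.
\item It chooses $\widetilde B_H\in\mathcal S_H$ minimising $s_n$, and sets $w_H:=x(\widetilde B_H)$. This vector is $\mathcal H_i$-measurable and well structured on $T_i$ by construction.
\item If $G_i$ holds for $M$, then $M\in\mathcal S_H$. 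Hence $\|N_Hw_H\|_2\le s_n(\widetilde B_H)\le s_n(M)$, where $N_H$ is the common matrix of the other rows and $V_H$ is its row span.
\item Lemma~\ref{lem:approx-normal} then gives $|\langle M_i,w_H\rangle|\le 2\dist(M_i,V_H)\le 2\sqrt2\,\kappa\sqrt n/\rho$, and Lemma~\ref{lem:lcd-smallball} finishes.
\end{enumerate}
The structural information in $G_i$ concerns $x(B)$, and the fibre minimiser transfers it to a conditionally fixed approximate normal. So the paper never needs a statement about $\ker B^{(i)}$ restricted to $T_i$, which is exactly the piece your argument lacks.
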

\begin{proof}[Proof of Proposition~\ref{prop:large}]
By \eqref{eq:large-average} and Lemma~\ref{lem:single-row},
\[
        \mathbf P_\Xi(\mathcal E_{\rm large})
        \le\frac4{\rho^2\delta n}\sum_{i=1}^n\mathbf P_\Xi(G_i)
        \le C\kappa\sqrt n+Ce^{-cn}.
\]
Thus, the result holds for all sufficiently large \(n\), provided that $C_{\ref{prop:large}}$ is sufficiently large and $c_{\ref{prop:large}}$ is sufficiently small.
\end{proof}
Before proving the lemma, we give the deterministic distance estimate
used in the conditioning argument. It is a simplification of the
argument in \cite[Lemma~4.3]{MR3936157}; see also
\cite[Lemma~4.2]{MR4523249}.

\begin{lemma}\label{lem:approx-normal}
Let $M=\begin{pmatrix}u\\N\end{pmatrix}$ be an $n\times n$ matrix, where $u\in\bbR^n$ and $N\in\bbR^{(n-1)\times n}$.
Let $V\subset\mathbb R^n$ be the subspace spanned by the rows of $N$, and set $s:=s_n(M)$. If $s>0$, for every $w\in\mathbb S^{n-1}$, we have
\[
        \dist(u,V)\ge\frac{s|\langle u,w\rangle|}{s+\|Nw\|_2}.
\]
Hence, if $\|Nw\|_2\le s$, then $\dist(u,V)\ge |\langle u,w\rangle|/2$.
\end{lemma}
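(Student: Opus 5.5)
The plan is to compare $\dist(u,V)$ with the component of $u$ along a unit normal to $V$, and to use $w$ to bound that normal component from below. Since $s=s_n(M)>0$, $M$ is invertible, so the rows of $N$ are linearly independent and $V$ is a hyperplane. Let $\nu\in\mathbb S^{n-1}$ be a unit normal to $V$, so that $N\nu=0$ and $\dist(u,V)=|\langle u,\nu\rangle|$. Then $M\nu=(\langle u,\nu\rangle,0,\dots,0)^T$. Applying the definition of the smallest singular value to $\nu$ gives
\[
s\le\|M\nu\|_2=|\langle u,\nu\rangle|=\dist(u,V).
\]
This already shows that $\dist(u,V)\ge s$. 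It remains to upgrade this to the stated bound involving $w$.

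Decompose $w=\langle w,\nu\rangle\nu+w'$ with $w'\in V$. Set $t:=\langle w,\nu\rangle$. Because $u=\langle u,\nu\rangle\nu+P_Vu$, we have
\[
\langle u,w\rangle=t\langle u,\nu\rangle+\langle P_Vu,w'\rangle.
\]
Bounding this expression directly would require control of $P_Vu$, which we do not have, so I will use the singular value bound applied to $w'$ instead. Since $w'\in V$ and $V$ is the row space of $N$, the vector $w'$ lies in the orthogonal complement of $\ker N$, and $N$ restricted to $V$ is invertible. The key step is to bound $\|w'\|_2$ from above in terms of $\|Nw\|_2$.

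Note that $Nw=Nw'$. Consider the vector $z:=w'-\alpha\nu$ and choose $\alpha$ so that $\langle u,z\rangle=0$. Such an $\alpha$ exists because $\langle u,\nu\rangle\neq0$, namely $\alpha=\langle u,w'\rangle/\langle u,\nu\rangle$. Then $Mz=(0,Nw')$, so
\[
s\|z\|_2\le\|Nw\|_2,
\]
and since $\|z\|_2\ge\|w'\|_2$ we obtain $s\|w'\|_2\le\|Nw\|_2$.

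Now write $\langle u,w\rangle=\langle u,t\nu+\alpha\nu\rangle+\langle u,z\rangle$. Since $\langle u,z\rangle=0$, this gives
\[
|\langle u,w\rangle|=|t+\alpha|\,\dist(u,V).
\]
Here $\alpha\nu$ is the component of $w'-z$ along $\nu$. So it suffices to show
\[
|t+\alpha|\le1+\frac{\|Nw\|_2}{s}.
\]

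The main obstacle is bounding $|\alpha|$. A cleaner route is to use the vector $v:=w-\beta\nu$ with $\beta:=\langle u,w\rangle/\langle u,\nu\rangle$. This choice gives $\langle u,v\rangle=0$ and $Nv=Nw$, so $s\|v\|_2\le\|Nw\|_2$. Then
\[
|\beta|=\|\beta\nu\|_2=\|w-v\|_2\le1+\|v\|_2\le1+\frac{\|Nw\|_2}{s}.
\]
Since $|\beta|=|\langle u,w\rangle|/\dist(u,V)$, rearranging yields
\[
\dist(u,V)\ge\frac{s|\langle u,w\rangle|}{s+\|Nw\|_2}.
\]
This second route is the one I will use. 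The final assertion follows by substituting $\|Nw\|_2\le s$.
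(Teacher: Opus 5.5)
Your proof is correct and is essentially the paper's argument. The paper takes the unit normal $v=P_{V^\perp}u/\|P_{V^\perp}u\|_2$ (your $\nu$, up to sign) and sets $r=\langle u,w\rangle/\langle u,v\rangle$, so that $\langle u,w-rv\rangle=0$ and $M(w-rv)=\begin{pmatrix}0\\ Nw\end{pmatrix}$; it then concludes $|r|\le\|w-rv\|_2+\|w\|_2\le1+\|Nw\|_2/s$, exactly as in your second route.

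The paper also treats the degenerate case $s=0$ of the final assertion, which your write-up omits. There $\|Nw\|_2\le s$ forces $Nw=0$, so $w\in V^\perp$ and $|\langle u,w\rangle|=|\langle P_{V^\perp}u,w\rangle|\le\dist(u,V)$. This case is needed in the later application to $G_i$, where $s_n(M)$ may vanish.
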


\begin{proof}
Assume that $s>0$. Then $M$ is invertible and $u\notin V$.
Set  $v:= {P_{V^\perp}u}/{\|P_{V^\perp}u\|_2}$,
         $r:= {\langle u,w\rangle}/{\langle u,v\rangle}$. 
Hence $\|v\|_2=1$, $Nv=0$, and $\langle u,v\rangle=\dist(u,V)>0$. Since $\langle u,w-rv\rangle=0$,
\[
        M(w-rv)=\begin{pmatrix}0\\Nw\end{pmatrix},
        \qquad s\|w-rv\|_2\le\|Nw\|_2.
\]
It follows that
\[
        \frac{|\langle u,w\rangle|}{\dist(u,V)}
        =|r|\le\|w-rv\|_2+\|w\|_2
        \le1+\frac{\|Nw\|_2}{s}.
\]
This proves both estimates when $s>0$. If $s=0$ and $\|Nw\|_2\le s$, then $Nw=0$ and $w\in V^\perp$.
Therefore,
\[
        |\langle u,w\rangle|
        =|\langle P_{V^\perp}u,w\rangle|
        \le\|P_{V^\perp}u\|_2=\dist(u,V),
\]
which completes the proof.
\end{proof}

\begin{proof}[Proof of Lemma~\ref{lem:single-row}]
Fix $i\in[n]$, and let $\mathcal H_i$ be the $\sigma$-algebra generated by
the rows $R_k(B)$ with $k\ne i$. Conditional on $\mathcal H_i=H$, the only
remaining randomness is the $i$-th row, and $R_i|_{T_i}=(\xi_{ij})_{j\in T_i}$, 
whose entries are independent Rademacher variables.

Let $\mathcal C_H$ be the finite set of matrices obtained by varying the $i$-th row while keeping all other rows fixed according to $H$, and define
\[
        \mathcal S_H:=\{M\in\mathcal C_H:
        \|x(M)|_{T_i}\|_2\ge a_{\ref{lem:qlcd-lower}},\
        \LCD_{\mu\sqrt n,\gamma}(x(M)|_{T_i})\ge L\}.
\]
If $\mathcal S_H=\varnothing$, then $G_i$ cannot occur under this condition. Otherwise, choose $\widetilde B_H\in\mathcal S_H$ such that
 $s_n(\widetilde B_H)=\min_{M\in\mathcal S_H}s_n(M)$,
          $w_H:=x(\widetilde B_H)$. 
Then $w_H$ is $\mathcal H_i$-measurable and satisfies $\|w_H|_{T_i}\|_2\ge a_{\ref{lem:qlcd-lower}}$,
         $\LCD_{\mu\sqrt n,\gamma}(w_H|_{T_i})\ge L$. 
         
Consider a matrix $M\in\mathcal C_H$ for which the event $G_i$ occurs. Then $M\in\mathcal S_H$, and hence $s_n(\widetilde B_H)\le s_n(M)$.
Let $N_H$ be the matrix obtained by deleting the $i$-th row from $\widetilde B_H$. Then, we have
\[
        \|N_Hw_H\|_2\le\|\widetilde B_Hw_H\|_2
        =s_n(\widetilde B_H)\le s_n(M).
\]
Let $V_H\subset\mathbb R^n$ be the linear subspace spanned by the rows of $N_H$. After permuting the rows so that $R_i(B)$ becomes the first row, Lemma~\ref{lem:approx-normal} gives $\dist(M_i,V_H)\ge |\langle M_i,w_H\rangle|/2$. 

On the other hand, since $B\in G_i$, we have
$s_n(M)\le\kappa$, $|y_i(M)|\ge\rho/\sqrt{2n}$. Moreover, since
$\sum_{k\ne i}y_k(M)M_k\in V_H$, we obtain
\[
        |y_i(M)|\dist(M_i,V_H)
        =\dist(y(M)^TM,V_H)
        \le\|M^Ty(M)\|_2=s_n(M)\le\kappa.
\]
Therefore,
\[
        |\langle M_i,w_H\rangle|\le2\dist(M_i,V_H)\le\frac{2\sqrt2}{\rho}\kappa\sqrt n.
\]
Thus, by Lemma~\ref{lem:lcd-smallball},
\[
\begin{aligned}
\mathbf P_\Xi(G_i\mid\mathcal H_i=H)
\le\mathbf P_\Xi\left\{
        \left|\sum_{j\in T_i}\xi_{ij}(w_H)_j\right|
        \le\frac{2\sqrt2}{\rho}\kappa\sqrt n
        \,\middle|\mathcal H_i=H\right\} \le C\kappa\sqrt n+\frac CL+Ce^{-c\mu^2n}.
\end{aligned}
\]
Since $1/L = {\kappa}/{\mu n} \le C\kappa\sqrt n$ for all sufficiently large $n$, there exist constants $C_{\ref{lem:single-row}}, c_{\ref{lem:single-row}}>0$ such that
\[
\mathbf P_\Xi(G_i\mid\mathcal H_i=H)
\le
C_{\ref{lem:single-row}}\kappa\sqrt n + e^{-c_{\ref{lem:single-row}}n}.
\]
The bound holds for all $H$, and integration over $\mathcal H_i$ completes the proof.
\end{proof}

\subsection{Completion of the proof}

We are now ready to complete the proof of Theorem~\ref{thm:ssv}.

\begin{proof}[Proof of Theorem~\ref{thm:ssv}]
For every constant $c_{\ref{lem:compressible}}>0$, the desired probability bound is trivial when $\kappa>c_{\ref{lem:compressible}}\sqrt n$, for all sufficiently large $n$. Therefore, it remains to consider $\exp{(-\chi n)}
        \le
        \kappa
        \le
        c_{\ref{lem:compressible}}\sqrt n$. 
Fix $A\in\mathcal G$ and set $B=A\circ\Xi$. Lemma~\ref{lem:compressible} gives
\[
\mathbf P_\Xi\left\{
        \inf_{x\in\operatorname{Comp}(\delta,\rho)}
        \|Bx\|_2
        \le
        c_{\ref{lem:compressible}}\sqrt n
\right\} +
\mathbf P_\Xi\left\{
        \inf_{y\in\operatorname{Comp}(\delta,\rho)}
        \|B^Ty\|_2
        \le
        c_{\ref{lem:compressible}}\sqrt n
\right\}
\le
2e^{-c_{\ref{lem:compressible}}n}.
\]
Outside this event, the selected vectors $x(B)$ and $y(B)$ both belong to $\operatorname{Incomp}(\delta,\rho)$.

Lemma~\ref{lem:qlcd-lower} gives $\QLCD^{\mathcal T}_{Q,\mu\sqrt n,\gamma}(x(B)) \ge c_{\ref{lem:qlcd-lower}}\sqrt n$. 
It follows that either \[
c_{\ref{lem:qlcd-lower}}\sqrt n
        \le
        \QLCD^{\mathcal T}_{Q,\mu\sqrt n,\gamma}(x(B))
        \le L, 
\, \text{or}\, \QLCD^{\mathcal T}_{Q,\mu\sqrt n,\gamma}(x(B))
        \ge L. \] 
The first case is bounded by Proposition~\ref{prop:small}, while the second case is contained in $\mathcal E_{\rm large}$ and is covered by Proposition~\ref{prop:large}. Thus, there exist constants $C,c>0$ such that
\[
\mathbf P_\Xi\{s_n(B)\le\kappa\}
\le
2e^{-c_{\ref{lem:compressible}}n}
+
e^{-c_{\ref{prop:small}}n}+
C_{\ref{prop:large}}\kappa\sqrt n
+
e^{-c_{\ref{prop:large}}n}\le
C\kappa\sqrt n
+
e^{-cn},
\]
for sufficiently large $n$. This bound holds for all deterministic $A\in\mathcal G$. Hence, using Proposition~\ref{prop:support}, there exist constants $C_{\lambda},c_{\lambda}>0$ such that
\[
\begin{aligned}
\mathbf P_{A,\Xi}\{s_n(A\circ\Xi)\le\kappa\}
 \le
\mathbf P_A(\mathcal G^c)
+
\sup_{A\in\mathcal G}
\mathbf P_\Xi\{s_n(A\circ\Xi)\le\kappa\}  \le
C_{\lambda}\kappa\sqrt n
+
e^{-c_{\lambda}n}.
\end{aligned}
\]
This completes the proof.
\end{proof}

\begingroup
\section*{Statement on the Use of Artificial Intelligence}
All principal ideas, results, and proof strategies in this paper were
developed by the authors. In particular, the proof of the smallest
singular value estimate and the conceptual framework of the rank
argument---including the covering-defect method, the sparse--nonsparse
kernel decomposition, and the thin--thick argument---were developed by
the authors without AI assistance. During the preparation of the
manuscript, ChatGPT and Codex (OpenAI), including the GPT-5.6 Sol and
GPT-6 Pro models, were used only as auxiliary tools to help check and
refine some technical details after the underlying ideas and proof structure had been supplied by the authors, and to assist with language editing. All AI-assisted material included in the manuscript was independently reviewed and verified by the authors, who take full responsibility for the mathematical content and final text.
\endgroup

\begingroup
\section*{Acknowledgments}
This work began with a discussion between K. Yu and Hanchao Wang. H. Liang, Y. Shi, and K. Yu thank Hanchao Wang for his guidance and valuable suggestions. T. Yu is grateful to his advisor, Alexander Litvak, for his continued support and encouragement, and for many helpful suggestions.
\endgroup

\bibliographystyle{amsplain}
\bibliography{references_1}

\vspace{1.2cm}

\par\bigskip
\begingroup
\small
\setlength{\parindent}{0pt}

\textsc{Huaijin Liang}\par\nobreak
\smallskip
Institute for Financial Studies,
Shandong University, Jinan 250100, China.\par\nobreak
\smallskip
\textit{Email address:} \texttt{lianghuaij@gmail.com}\par

\bigskip

\textsc{Yuxiao Shi}\par\nobreak
\smallskip
Institute for Financial Studies,
Shandong University, Jinan 250100, China.\par\nobreak
\smallskip
\textit{Email address:} \texttt{yshi6821@gmail.com}\par

\bigskip

\textsc{Kexin Yu}\par\nobreak
\smallskip
School of Mathematics, Shandong University,
Jinan 250100, China.\par\nobreak
\smallskip
\textit{Email address:} \texttt{kexinyu01@gmail.com}\par

\bigskip

\textsc{Tingzhou Yu}\par\nobreak
\smallskip
Department of Mathematical and Statistical Sciences,
University of Alberta, Edmonton, AB T6G 2R3, Canada.\par\nobreak
\smallskip
\textit{Email address:} \texttt{tingzho1@ualberta.ca}\par

\endgroup

\end{document}